\titleformat{\section}{\normalfont\normalsize\bfseries}{\thesection}{1em}{}
\newtheorem{theorem}{Theorem}[section]
\newtheorem{definition}{Definition}[section]
\newtheorem{lemma}{Lemma}[section]
\numberwithin{equation}{section}
\numberwithin{theorem}{section}
\def\tagform@#1{\maketag@@@{\ignorespaces#1\unskip\@@italiccorr}}
\let\orgtheequation\theequation
\def\theequation{(\orgtheequation)}
\begin{document}
	\title{finite time blow-up analysis for the generalized Proudman-Johnson model}
	\begin{abstract}
		In this paper, we study the generalized Proudman-Johnson equation posed on the torus. In the critical regime where the parameter $a$ is close to and slightly greater than 1, we establish finite time blow-up of smooth solutions to the inviscid case. Moreover, we show that the blow-up is asymptotically self-similar for a class of smooth initial data. In contrast, when the parameter $a$ lies slightly below 1, we prove the global in time existence for the same initial data. In addition, we demonstrate that inviscid Proudman-Johnson equation with H\"{o}lder continuous data also develops a self-similar blow-up. Finally, for the viscous case with $a>1$, we prove that smooth initial data can still lead to finite time blow-up.
	\end{abstract}

	\author{Jie Guo$^{*}$\, \,  QuanSen Jiu$^{\dagger}$}
	
	\address{$^{*}$School of Mathematical Sciences, Capital Normal University, Beijing, 100048, P.R. China.}
	
	\email{2230501023@cnu.edu.cn}
	\address{$^{\dagger}$School of Mathematical Sciences, Capital Normal University, Beijing, 100048, P.R. China.}
	
	\email{jiuqs@cnu.edu.cn}

	\maketitle
	
	\medskip

	{\bf MSC 2020:}
	35B10, 35B44, 35C06, 35Q35.
	
	\section{Introduction}
	In this paper, we investigate a one-parameter family of partial differential equations given by
	\begin{align}\label{gpje}
		u_{txx} + uu_{xxx} = au_{x}u_{xx}+\nu u_{xxxx}, \quad t>0, \ x\in\mathbb{T},
	\end{align}
	posed on the torus \(\mathbb{T} = [-\pi,\pi]\), where $a\in\mathbb{R}$ is a parameter and $\nu\ge0$ represents the viscosity. This equation was originally introduced in \cite{OHZ2000} for specific values of $a$ as a model derived from high-dimensional Navier-Stokes equations under certain symmetry assumptions. By defining $u_{xx}=\omega$, equation \ref{gpje} can be written as
	\begin{align}\label{PJE}
		\begin{cases}
			\omega_{t} + u\omega_{x} = a\omega u_{x}+\nu\omega_{xx}, & t >0, \ x \in \mathbb{T}, \\
			u_{xx} =\omega, & t >0, \ x \in \mathbb{T}.
		\end{cases}
	\end{align}
	When $a=1$, \eqref{PJE} is the Proudman-Johnson equation. In particular, when $a=1$ and $\nu=0$, \eqref{PJE} is the inviscid Proudman-Johnson equation, which models inviscid, incompressible flows near a boundary (see \cite{CS1989}). For general $a\in \mathbb{R}$, equation \eqref{PJE}, which is equivalent to \eqref{gpje}, is called a generalized Proudman-Johnson equation. By generalizing the Proudman-Johnson equation with a parameter
	  $a$, one can systematically study the balance between the ``convection term''($uu_{xxx}$) and the ``stretching term''($u_{x}u_{xx}$), which leads to either the creation or depletion of singularities in finite time (see \cite{OO2005, OS2008}).

 The generalized Proudman-Johnson equation encompasses several well-known models. When $a=-3$, it reduces to the classical Burgers' equation, which plays a central role in gas dynamics (see \cite{B1948}). For $a=-2$, the equation becomes the Hunter-Saxton equation (see \cite{B1991, L2007, L2008, Y2004}), which arises in the modeling of the orientation dynamics of nematic liquid crystals. The case $a=0$, the equation $u_{txx} + uu_{xxx} = 0$ appears in contexts that connect projective geometry and gravitational models (see \cite{P2001}). The inviscid form of model \eqref{PJE} also resembles the generalized Constantin-Lax-Majda (gCLM) model (see \cite{OS2008}), which takes the form
		\begin{align}\label{gCLM}
		\begin{cases}
			\omega_{t} + u\omega_{x} = a\omega u_{x}+\nu\omega_{xx}, & t >0, \ x \in \mathbb{T}, \\
			u_{x} = H\omega, & t >0, \ x \in \mathbb{T},
		\end{cases}
	\end{align}
	where $H$ represents the Hilbert transform. When $a=1$, the gCLM model \eqref{gCLM} reduces to the De Gregorio (DG) equation. Notably, the explicit steady state $(\omega,u) = (-\sin (x),\sin(x))$ is a solution to both  equation \eqref{PJE} and the DG equation on the torus $\mathbb{T}$.

The local well-posedness of the generalized  Proudman Johnson equation \ref{gpje} in the periodic setting was established in \cite{OHZ2000, OH2009}. In particular, \cite{OHZ2000} derived a sufficient condition for finite time blow-up of solutions in the case $a=\infty$,
	\begin{align*}
		u_{txx} - u_{x}u_{xx} = \nu u_{xxxx}, \quad t>0, \ x \in \mathbb{T}.
	\end{align*}
	Moreover, the authors in \cite{OHZ2000, OH2009} investigated parameter conditions on $a$ that ensure the global existence of solutions for general initial data. Further analytical and numerical results concerning blow-up phenomena were also presented in \cite{OHZ2000}. For $a=1$, more refined blow-up criteria have been derived, as discussed in \cite{CS1989}, where the authors employed various trajectory-based analytical techniques.

	  In \cite{CW2010}, the authors proved the existence of a class of global-in-time solutions to the generalized inviscid Proudman-Johnson equation for parameters of the form $a = -\frac{n+3}{n+1}$, where $n\in\mathbb{N}$. Subsequently, in \cite{CW2012}, global existence was established for $a\in [-2,-1)$ in the inviscid setting, using the method of characteristics. In \cite{SS2013,SS2015}, various global existence and blow-up scenarios are investigated based on a representation formula for the velocity gradient along particle trajectories. In contrast, singularity formation for $a>1$ remains less understood. In \cite{OH2009}, the authors construct non-smooth self-similar solutions of the form $u(x,t)=\frac{F(x)}{T-t}$, which blow-up at time $T$. These solutions are obtained by solving the nonlinear ordinary differential equation
	\begin{align}\label{Fequ}
		F''(x) + F(x)F'''(x) - aF'(x)F''(x) = 0, \quad x\in \mathbb{T},
	\end{align}
	for $a>1$. However, in this regime, equation \eqref{Fequ} admits only non-smooth solutions. More recently, Kogelbauer \cite{KF2020} established new criteria for global existence and finite time blow-up in the inviscid generalized Proudman-Johnson equation, depending on the parameter $a$. In particular, \cite{KF2020} demonstrated that even smooth initial data may lead to finite time singularities when $a>1$. Additionally, \cite{KF2020} provides a physical derivation of the generalized equation for arbitrary values of the parameter $a$.

	In this paper, we will investigate model \ref{PJE} on the torus $\mathbb{T}$ for parameter values $a$ near 1, which can be interpreted as a slight perturbation of the Proudman-Johnson case $(a=1)$. For the inviscid model \ref{PJE}, this regime is particularly interesting for smooth initial data, as it captures two distinct dynamical scenarios:
	\begin{itemize}
		\item When $a>1$, the advection term is slightly weaker than the vortex stretching term, leading to finite time blow-up;
		\item When $a<1$, the advection term slightly dominates the vortex stretching term, resulting in different long-time behavior.
	\end{itemize}
	  Moreover, we establish finite time singularity formation for the inviscid model \ref{PJE} with  H\"{o}lder continuous  data in the case $a=1$ and the finite-time blow-up for the viscous case with $a>1$.
	
	Before stating our main results, we introduce a class of weighted Sobolev spaces that will serve as the functional framework for our analysis.
	\begin{definition}[Weighted norms and spaces]
	  Define the singular weight $\rho = \frac{1}{4\pi\sin^{2} (\frac{x}{2})}$ and the weighted norms  $\|\cdot\|_{\mathcal{H}}$ and  $\|\cdot\|_{\mathcal{W}}$ as follows
	  \begin{align}
	  	\|f\|_{\mathcal{H}}^{2} = \frac{1}{4\pi}\int_{\mathbb{T}}\frac{|f_{x}|^{2}}{\sin^{2} (\frac{x}{2})} \, \mathrm{d}x,
	  \end{align}
	  \begin{align}
	  	\|f\|_{\mathcal{W}} = \|f\|_{\mathcal{H}}^{2} + \int_{\mathbb{T}} |f_{xx}|^{2}\cos^{2} (\frac{x}{2}) \, \mathrm{d}x.
	  \end{align}
	  The associated Hilbert spaces are defined as
	  \begin{align*}
	  	\mathcal{H} = \{f\in H^{1}(\mathbb{T}) |\, \text{f is odd},\ \|f\|_{\mathcal{H}} < + \infty\},
	  \end{align*}
	  \begin{align*}
	  	\mathcal{W} = \{f\in H^{2}(\mathbb{T}) |\, \text{f is odd},\ \|f\|_{\mathcal{W}} < +\infty\},
	  \end{align*}
	  which are equipped with inner products naturally induced by the corresponding norms.
	\end{definition}
	Define $\tilde{e}_{k}^{(o)}=\frac{\sin((k+1)x)}{k+1}-\frac{\sin (kx)}{k}$. In fact, $\{\tilde{e}_{k}^{(o)},k\geq 1\}$ forms a complete orthonormal basis of $\mathcal{H}$ (see Lemma \ref{le cobasis}).
   The norm and inner product in $\mathcal{H}$  were first introduced in \cite{LL2020} for stability analysis. The $\mathcal{W}$ norm can be found in \cite{C2021} for the purpose of higher order derivative estimates.

    Throughout this paper,  $\|\cdot\|_{L^{p}},\|\cdot\|_{H^{m}}$ and $\|\cdot\|_{L^{\infty}}$ represent the norms in the spaces $L^{p}(\mathbb{T}), H^{m}(\mathbb{T})$ and $L^{\infty}(\mathbb{T})$, respectively. Similarly, $\left\langle\cdot, \cdot\right\rangle$ denotes the usual inner product in $L^{2}(\mathbb{T})$, defined as
   \begin{align*}
   	\left\langle f, g\right\rangle = \int_{\mathbb{T}}fg \, \mathrm{d}x.
   \end{align*}
   We use $C,C_{i}$ to denote absolute constants and $C(A,B,\cdots,Z)$ to denote constants depending on $A,B,\cdots,Z$. And we also employ the notation $A\lesssim B$ to indicate that there exists a constant $C$ such that $A\leq CB$.

Now we are ready to state our main results.   Our first main result establishes the existence of a family of self-similar solutions to the inviscid model \ref{PJE}.
	\begin{theorem}\label{them profile}
		{\bf (The  case $\nu=0$)} There exists an absolute constant $\delta_{1}>0,$  sufficiently small, such that the following statements hold:
		
		(1) For \(a\in (1, 1+\delta_{1})\) The model \ref{PJE} with $\nu=0$ develops a finite time singularity for some \(C^{\infty}\) initial data;
		
		(2) For all $a\in(1-\delta_{1},1+\delta_{1}),$ the model \ref{PJE} admits a self-similar solution of the form
		\begin{align*}
			\omega(x,t) = \frac{1}{1 + c_{\omega,a}t}\omega_{a}(x),
		\end{align*}
		where $\omega_{a}$ is an odd profile and $c_{\omega,a}$ is the scaling parameter,  satisfying
		\begin{align}\label{the estimates}
			\|\omega_{a} +\sin (x)\|_{\mathcal{W}} \lesssim |1-a|, \quad |c_{\omega,a} - (1-a)|\leq \min\{C|1-a|^{2}, |1-a|\},
		\end{align}
		for some absolute constant $C > 0$. More precisely:
		\begin{itemize}
			\item for $1 < a < 1+\delta_{1}$, the scaling parameter satisfies $c_{\omega,a} < 0$, and the corresponding solution $\omega(x,t)$ blows up in finite time $T = -\frac{1}{c_{\omega,a}}$;
			\item for $a = 1$, the explicit steady state $\omega_{1} = -\sin (x)$ solves the system with $c_{\omega,a} = 0$;
			\item for $1-\delta_{1}<a<1$, one has $c_{\omega,a} > 0$, and the solution $\omega(x,t)$ exists globally with $O(t^{-1})$ decay rate as $t\to\infty$.
		\end{itemize}
	\end{theorem}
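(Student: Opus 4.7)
The plan is to construct the profile $(\omega_a, c_{\omega,a})$ by perturbation around the explicit steady state $(-\sin x, \sin x, 0)$ that solves the $a = 1$ problem, and then to read off the three regimes of Theorem~\ref{them profile} directly from the sign of $c_{\omega,a}$. Substituting the ansatz $\omega(x,t) = \omega_a(x)/(1 + c_{\omega,a}t)$, $u(x,t) = u_a(x)/(1 + c_{\omega,a}t)$ into the inviscid \eqref{PJE} reduces the PDE to the profile equation
\begin{equation*}
  -c_{\omega,a}\,\omega_a + u_a(\omega_a)_x - a\,\omega_a(u_a)_x = 0, \qquad (u_a)_{xx} = \omega_a.
\end{equation*}
Writing $\omega_a = -\sin x + g$, $u_a = \sin x + h$ with $h_{xx} = g$ and $g$ odd in $\mathcal{W}$, and using the identity $\sin x\, f_x - f\cos x = \sin^2 x\,\partial_x(f/\sin x)$, the linearisation at $a=1$ takes the first-order transport form
\begin{equation*}
  \sin^2 x\,\partial_x\!\bigl((g+h)/\sin x\bigr) + c_{\omega,a}\sin x + (a-1)\sin x\cos x = \mathcal N(g, c_{\omega,a}; a),
\end{equation*}
with $\mathcal N$ quadratic in $g$ and $c_{\omega,a} - (1-a)$.

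Differentiating the profile equation once and evaluating at $x=0$, where $\omega_a(0) = u_a(0) = 0$, yields the exact algebraic identity
\begin{equation*}
  (\omega_a)_x(0)\bigl[-c_{\omega,a} + (1-a)(u_a)_x(0)\bigr] = 0.
\end{equation*}
Since $(\omega_a)_x(0) = -1 + O(|1-a|) \neq 0$, this forces $c_{\omega,a} = (1-a)(u_a)_x(0)$, and because $(u_a)_x(0) = 1 + O(|1-a|)$, both the leading behaviour $c_{\omega,a} = (1-a) + O(|1-a|^2)$ and the refined estimate $|c_{\omega,a} - (1-a)| \lesssim |1-a|^2$ of \eqref{the estimates} follow immediately once $\|g_a\|_{\mathcal{W}}$ is controlled. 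Treating $c_{\omega,a}$ as slaved to $g$ through this identity, I would then set up a fixed-point problem for $g \in \mathcal{W}$. The linearised operator is injective on $\mathcal{W}$, since any element in its kernel would be a multiple of $\sin x$, but $\sin x \notin \mathcal{W}$ (the singular weight $1/\sin^2(x/2)$ in the $\mathcal{H}$-part of the norm diverges at $x=0$). Expanding in the orthonormal basis $\{\tilde e_k^{(o)}\}$ of $\mathcal{H}$, one produces a bounded right inverse into $\mathcal{W}$; the solution inherits a mild $\sin x\,\ln|\cos(x/2)|$ behaviour at $x = \pm \pi$, which is precisely why the $\mathcal{W}$-norm carries the degenerating weight $\cos^2(x/2)$. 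Since the source has $\mathcal{W}$-size $\lesssim |a-1|$ and $\mathcal{N}$ is locally Lipschitz quadratic in $g$, a contraction-mapping argument in a ball of radius $C|a-1|$ in $\mathcal{W}$ delivers a unique $g_a$ satisfying $\|g_a\|_{\mathcal{W}} \lesssim |a-1|$.

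Once $(g_a, c_{\omega,a})$ is constructed, the three regimes of Theorem~\ref{them profile} follow directly from $c_{\omega,a} \approx 1 - a$: at $a = 1$ the steady state $\omega_1 = -\sin x$ is recovered with $c_{\omega,1} = 0$; for $1 < a < 1 + \delta_1$ one has $c_{\omega,a} < 0$ so $\omega(x,t) = \omega_a(x)/(1 + c_{\omega,a}t)$ blows up at $T = -1/c_{\omega,a}$; for $1 - \delta_1 < a < 1$ one has $c_{\omega,a} > 0$ and the solution is global with $O(t^{-1})$ decay. For Part~(1), a smooth mollification of $\omega_a$ that matches the self-similar profile in a neighbourhood of $x = 0$ provides $C^\infty$ initial data for which the pointwise identity $\tfrac{d}{dt}\omega_x(0,t) = (a-1)\omega_x(0,t)u_x(0,t)$ obtained from \eqref{PJE} at $x = 0$, combined with an asymptotic self-similar stability argument controlling the $\mathcal{W}$-distance from the exact self-similar solution, forces a finite-time singularity. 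The main obstacle will be the uniform invertibility of the linearisation in $\mathcal{W}$: the weights $\sin^2(x/2)$ and $\cos^2(x/2)$ degenerate at exactly the points where the profile is most singular, so keeping the constants sharp requires a careful combination of the orthonormal expansion in $\{\tilde e_k^{(o)}\}$ with Hardy-type control of the $\cos(x/2)$-weighted second derivatives.
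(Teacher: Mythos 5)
Your route is genuinely different from the paper's: you construct the profile statically, by a Lyapunov--Schmidt/contraction argument for the profile ODE around $(-\sin x,\sin x,0)$, whereas the paper never inverts the linearization. Instead it works with the dynamically rescaled flow \eqref{dre}, proves the coercivity $\langle\mathcal{L}_1\hat\omega,\hat\omega\rangle_{\mathcal{H}}\le-\tfrac12\|\hat\omega\|_{\mathcal{H}}^2$ (Lemma \ref{le operator}, via the tridiagonal action of $\mathcal{L}_1$ on the basis $\tilde e_k^{(o)}$), runs a bootstrap in $\mathcal{H}$ and then in $E(\tau)^2=\|\partial_xD_x\hat\omega\rho^{1/2}\|_{L^2}^2+\mu\|\hat\omega\|_{\mathcal{H}}^2$, and obtains the profile as the $\tau\to\infty$ limit after showing $\|\hat\omega_\tau\|_{\mathcal{H}}$ decays exponentially. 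Your slaving identity $c_{\omega,a}=(1-a)(u_a)_x(0)$ is exactly the paper's normalization \eqref{hat comega}, and your derivation of it (differentiating the profile equation at $x=0$) is correct, as is the observation that $\sin x\notin\mathcal{H}$ so the degenerate direction is excluded by the function space. What your approach buys is a cleaner existence statement for the profile; what it costs is that the entire weight of the argument falls on the \emph{uniform invertibility of $\mathcal{L}_1$ as a map into $\mathcal{W}$}, which you assert but do not establish -- the coercivity in $\mathcal{H}$ gives injectivity and a bound on the inverse in $\mathcal{H}$, but a bounded right inverse landing in $\mathcal{W}$ requires precisely the weighted second-order estimates ($\partial_xD_x$ commutator computations) that occupy Step 4 of the paper. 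This is a fixable but substantive omission.

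The genuine gap is in Part (1). The profile $\omega_a$ you construct lives in $\mathcal{W}\subset H^2$ and there is no reason for it to be $C^\infty$; so exhibiting the exact self-similar solution does not produce blow-up for smooth data. Your fix -- mollify $\omega_a$ near $x=0$ and invoke ``an asymptotic self-similar stability argument controlling the $\mathcal{W}$-distance from the exact self-similar solution'' -- is not a small addendum: that stability argument \emph{is} the content of the paper's proof of Part (1) (Steps 1--3: nonlinear energy estimate in $\mathcal{H}$ around the approximate steady state, bootstrap giving $\|\hat\omega(\tau)\|_{\mathcal{H}}\le C|1-a|$ for all $\tau$, hence $c_\omega(\tau)=(1-a)(1+\hat u_x(0,\tau))<0$ uniformly, hence $t(\infty)<\infty$). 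In the paper one may simply take the $C^\infty$ initial data $\omega_0=-\sin x$ (i.e.\ $\hat\omega(0)=0$) and never needs the exact profile at all for Part (1). As written, your proposal derives Part (1) from Part (2) plus an unproven perturbative stability claim, which inverts the logical order of the paper and leaves the blow-up for smooth data unestablished. Also note your pointwise identity at the origin should read $\tfrac{d}{dt}\omega_x(0,t)=(a-1)\omega_x(0,t)u_x(0,t)$ only for the unrescaled flow with $u(0,t)=0$ by oddness; by itself it cannot force blow-up without the uniform-in-time lower bound on $|u_x(0,t)|$ that the stability analysis supplies.
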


	Our next result concerns the formation of finite time singularities in the generalized Proudman-Johnson model with H\"{o}lder continuous initial data.
	\begin{theorem}\label{the holder}
		{\bf (The  case $\nu=0$ and $a=1$)}  There exists a constant $\delta_{2}>0$, such that for all $\alpha\in(1-\delta_{2},1),$ the model \ref{PJE} with $\nu=0$ and $a=1$ develops a finite time singularity for some $C^{\alpha}$ initial data. Moreover, there exists a $C^{\alpha}$ self-similar profile, analogous to the setting in Theorem \ref{them profile}.
	\end{theorem}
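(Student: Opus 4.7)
The plan is to carry out the same perturbative self-similar construction that proves Theorem \ref{them profile}, but with $1-\alpha$ playing the role of $|1-a|$ as the small parameter. Starting from the ansatz
\begin{equation*}
\omega(x,t) = \frac{1}{1+c_{\omega,\alpha} t}\,\omega_\alpha(x),\qquad u_{\alpha,xx}=\omega_\alpha,
\end{equation*}
with $\omega_\alpha$ odd and $C^\alpha$, equation \eqref{PJE} (at $a=1$, $\nu=0$) reduces to the nonlinear profile equation
\begin{equation*}
c_{\omega,\alpha}\,\omega_\alpha = u_\alpha\,\omega_\alpha' - \omega_\alpha\,u_\alpha'.
\end{equation*}

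To accommodate the H\"{o}lder regularity while still exploiting the weighted spaces $\mathcal{H}$ and $\mathcal{W}$, I would pass to the $\alpha$-dependent coordinate $y=\mathrm{sgn}(x)|\sin(x/2)|^{\alpha}$, which sends odd $C^\alpha$ profiles to $C^1$ functions of $y$ and resolves the expected singularity at $x=0$. Writing $\omega_\alpha(x)=\Omega_\alpha(y(x))$, the Poisson-type inversion $u_{\alpha,xx}=\omega_\alpha$ becomes an $\alpha$-dependent nonlocal operator $K_\alpha$ that depends regularly on $\alpha$ and reduces, at $\alpha=1$, to the smooth inversion used in Theorem \ref{them profile}. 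In these transformed variables the profile equation is an analytic one-parameter perturbation of its $a=1$ counterpart, whose smooth steady state is $\omega_1=-\sin(x)$.

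The construction then mirrors Theorem \ref{them profile}: I would linearize about $\omega_1$ in the $y$-adapted analogue of $\mathcal{W}$, invoke the coercivity of this linearization on the orthogonal complement of its single neutral mode (the scaling direction), and perform a Lyapunov--Schmidt reduction in which $c_{\omega,\alpha}$ is chosen to project out that mode. A Banach contraction argument on the orthogonal complement then produces the pair $(\omega_\alpha, c_{\omega,\alpha})$ for all $\alpha\in(1-\delta_2,1)$ with $\delta_2$ small. A first-order expansion in $1-\alpha$ yields $c_{\omega,\alpha}<0$: the H\"{o}lder singularity amplifies the stretching relative to transport in exactly the way that $a>1$ did in Theorem \ref{them profile}, so the resulting self-similar solution blows up at $T=-1/c_{\omega,\alpha}$.

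The main obstacle is the careful analysis of the transformed inversion operator $K_\alpha$. One must establish uniform-in-$\alpha$ boundedness on the $y$-adapted weighted spaces, smooth dependence on $\alpha$ in an operator norm strong enough to preserve the coercivity of the $\alpha=1$ linearization, and correct handling of the odd parity and mean-zero constraints at $y=0$ so that no spurious singularity is introduced by differentiating in $\alpha$. Once these functional-analytic properties of $K_\alpha$ are in place, the remainder of the argument is a direct reprise of the proof of Theorem \ref{them profile}, with $1-\alpha$ replacing $|1-a|$ throughout.
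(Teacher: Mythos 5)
Your strategy is genuinely different from the paper's, and it is worth saying how before identifying the problem. The paper does not solve the profile equation by an implicit-function-theorem or Lyapunov--Schmidt argument. It takes the \emph{explicit} approximate profile $\bar\omega_\alpha=-\mathrm{sgn}(x)|\sin (x)|^{\alpha}$ with $\bar c_{\omega,\alpha}=(\alpha-1)\bar u_{\alpha,x}(0)$, keeps the original variable $x$, and reruns the dynamic-rescaling/weighted-energy stability argument of Theorem \ref{them profile}: perturbations vanishing to higher order than $x^{\alpha}$ at the origin are estimated in $\mathcal{H}$ and $\mathcal{W}$, the residual created by the $C^{\alpha}$ profile is shown to be $O(|\alpha-1|)$ in these norms, and the exact self-similar profile is obtained as the $\tau\to\infty$ limit of the rescaled flow. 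Your route --- passing to $y=\mathrm{sgn}(x)|\sin(x/2)|^{\alpha}$ and contracting about $\omega_1=-\sin(x)$ --- would, if completed, also prove the statement (the exact self-similar solution is itself the blow-up datum), and its skeleton is sound: $\mathcal{L}_1$ is coercive on $\mathcal{H}$ (Lemma \ref{le operator}), the lone neutral direction $\sin(x)$ is excluded from $\mathcal{H}$, and the free parameter $c_{\omega,\alpha}$ absorbs it.

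The gap is that you have deferred precisely the step where all of the difficulty lives. The assertion that the transformed inversion $K_\alpha$ ``depends regularly on $\alpha$,'' so that the profile equation is ``an analytic one-parameter perturbation,'' is equivalent to showing that the residual of the $\alpha$-dependent approximate solution is small in a norm compatible with the singular weight $\rho\sim x^{-2}$. In the paper this is exactly Lemma \ref{le res estimates}, and its crux is the cancellation \ref{cancellation}: the two terms $(\alpha-1)\bar\omega_{\alpha,x}$ and $\sin(x)\,\omega_{res,xx}$ are each of size $|\alpha-1|\,|\sin (x)|^{\alpha-1}$ near the origin and cannot be controlled separately in the weighted space, but their difference is $O(\min\{|\alpha-1|,|x|^{2}\}|\sin(x)|^{\alpha-1})$, which is what makes the terms $\omega_{res,xx}\hat u$ and $\hat c_{\omega,\alpha}\bar\omega_{\alpha,x}$ jointly admissible. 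Your plan contains no analogue of this cancellation, and naive ``smooth dependence on $\alpha$'' is suspect because $\partial_\alpha|\sin(x/2)|^{\alpha}$ produces logarithmic factors at the origin; so the regularity of $K_\alpha$ is the theorem, not a verification. Separately, the sign $c_{\omega,\alpha}<0$ does not follow from the heuristic that the H\"older singularity ``amplifies stretching'': one must compute the leading coefficient, which here is $c_{\omega,\alpha}=(\alpha-1)\bigl(\bar u_{\alpha,x}(0)+O(|\alpha-1|)\bigr)$ with $\bar u_{\alpha,x}(0)\to\bar u_x(0)=1>0$, and check that the correction coming from the fixed point does not overturn it.
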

	
	The final result concerns the finite time blow-up of model \ref{PJE} in the presence of viscosity. The dynamic rescaling formulation suggests that the viscous terms are asymptotically small. Building upon Theorem \ref{them profile} in the regime $1<a<1+\delta_{3},$ we establish the following result.
	
	\begin{theorem}\label{the viscous}
		{\bf (The  case $\nu>0$)} There exists a constant $\delta_{3}>0$ such that for all $a\in(1,1+\delta_{3}),$ the model \ref{PJE} with $\nu>0$ develops a singularity in finite time for some $C^{\infty}$ initial data.
	\end{theorem}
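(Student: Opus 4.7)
My approach is to treat the viscous equation \eqref{PJE} as a small perturbation of the inviscid self-similar blow-up from Theorem \ref{them profile}, by passing to dynamic rescaling variables in which the viscous coefficient becomes exponentially small in rescaled time, and then exploiting the amplitude scaling symmetry of the inviscid model to make that coefficient initially small as well. Denote $|c|:=|c_{\omega,a}|$, which is of order $a-1>0$ by Theorem \ref{them profile}, and introduce a free amplitude parameter $\lambda>0$ to be chosen large depending on $\nu$ and $a$. Set
\begin{align*}
\tilde\omega(x,s) \;=\; \lambda^{-1}\bigl(1-\lambda|c|t\bigr)\,\omega(x,t),\qquad s \;=\; -\log\bigl(1-\lambda|c|t\bigr),
\end{align*}
and $\tilde u$ analogously. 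A direct computation recasts \eqref{PJE} as
\begin{align*}
\partial_s\tilde\omega \;=\; -\tilde\omega + \frac{1}{|c|}\bigl(a\tilde u_x\tilde\omega - \tilde u\tilde\omega_x\bigr) + \frac{\nu e^{-s}}{\lambda|c|}\,\tilde\omega_{xx},
\end{align*}
which admits $\omega_a$ from Theorem \ref{them profile} as a stationary solution at $\nu=0$, and whose viscous coefficient is bounded uniformly in $s\ge0$ by $\nu/(\lambda|c|)$ and hence made arbitrarily small by taking $\lambda$ large.

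Writing $\tilde\omega=\omega_a+\eta$, the perturbation satisfies
\begin{align*}
\partial_s\eta \;=\; \mathcal{L}_a\eta + \mathcal{Q}(\eta,\eta) + \frac{\nu e^{-s}}{\lambda|c|}\bigl(\omega_{a,xx}+\eta_{xx}\bigr),
\end{align*}
where $\mathcal{L}_a$ is the linearization around $\omega_a$ featuring in the proof of Theorem \ref{them profile} and $\mathcal{Q}$ collects the quadratic remainder. A $\mathcal{W}$-energy estimate would combine the coercivity of $-\mathcal{L}_a$ inherited from the construction of $\omega_a$ (after projecting out the scaling neutral mode, see below), the favorable dissipative sign of the $\nu e^{-s}\eta_{xx}/(\lambda|c|)$ contribution in the $\mathcal{W}$-inner product after integration by parts, and the uniform bound on $\|\omega_{a,xx}\|_{L^2}$ provided by \eqref{the estimates}, to produce a differential inequality of the schematic form
\begin{align*}
\frac{d}{ds}\|\eta\|_\mathcal{W}^2 \;\leq\; -2\mu\|\eta\|_\mathcal{W}^2 + C\|\eta\|_\mathcal{W}^3 + \frac{C\nu e^{-s}}{\lambda|c|}\bigl(1+\|\eta\|_\mathcal{W}\bigr),
\end{align*}
for some $\mu>0$ uniform in $a$. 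A standard continuity/bootstrap argument then yields $\|\eta(s)\|_\mathcal{W}\lesssim \nu/(\lambda|c|)$ for all $s\ge 0$, provided $\lambda$ is chosen sufficiently large and the smooth initial perturbation $\eta(0)$ is small (one may even take $\eta(0)=0$, so that $\omega_0=\lambda\omega_a\in C^\infty(\mathbb{T})$). Unwinding the rescaling, $\omega(x,t)=\lambda e^s(\omega_a+\eta)$ remains $\mathcal{W}$-close to the inviscid self-similar ansatz $\lambda\omega_a/(1-\lambda|c|t)$, and its $L^\infty$-norm therefore diverges at a finite time $T_\nu$ close to $1/(\lambda|c|)$.

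The main obstacle is the scaling neutral mode of $\mathcal{L}_a$: the one-parameter family $\{\mu\omega_a\}_{\mu>0}$ of inviscid self-similar solutions produces a zero eigenvector of $\mathcal{L}_a$, so unqualified $\mathcal{W}$-coercivity $\langle\mathcal{L}_a\eta,\eta\rangle_\mathcal{W}\le -\mu\|\eta\|_\mathcal{W}^2$ cannot hold. I would address this by introducing a time-dependent modulation of the rescaling parameter (equivalently of $\lambda$, or of the effective blow-up rate), deriving a small ODE for the modulation that keeps $\eta$ orthogonal to the neutral direction in $\mathcal{W}$, and closing the energy estimate on the resulting stable subspace; this same modulation mechanism is expected to feature in the proof of Theorem \ref{them profile} and can be transplanted here. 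A secondary technicality is that the viscous term must be integrated by parts against the degenerate weight $\sin^{-2}(x/2)$ defining $\mathcal{W}$, producing weight-commutator terms; however, all such terms carry the small prefactor $\nu e^{-s}/(\lambda|c|)$ and can therefore be absorbed by it, so no enlargement of the functional framework beyond $\mathcal{W}$ is needed.
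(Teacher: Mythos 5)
Your overall strategy — rescale so that the viscous coefficient carries a small prefactor (your $\nu e^{-s}/(\lambda|c|)$ plays the role of the paper's $\nu C_\omega(\tau)$ with $C_\omega(0)=|1-a|^2$), perturb around an inviscid profile, and close a weighted energy estimate with a modulation/normalization condition killing the scaling mode — is the same skeleton as the paper's argument. But there is a genuine gap at the single point where the viscous case is actually harder than the inviscid one, and it is exactly the point you wave away in your last paragraph. When you pair $\frac{\nu e^{-s}}{\lambda|c|}\eta_{xx}$ with $\eta$ in the singular inner product $\langle \eta_{xxx},\eta_x\rho\rangle$ and integrate by parts, the commutator with the weight produces, after a second integration by parts, the term $\frac{1}{2}\langle \eta_x^2,\rho_{xx}\rangle$ with $\rho_{xx}\sim x^{-4}$ near the origin. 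This term has the \emph{bad} sign, and — this is the key point — it is not bounded by $\|\eta\|_{\mathcal{W}}^2$ at all: membership in $\mathcal{H}$ only gives $\int \eta_x^2 x^{-2}<\infty$, so $\int \eta_x^2 x^{-4}$ may be infinite for a generic $\eta\in\mathcal{W}$. Your claim that such terms ``carry the small prefactor $\nu e^{-s}/(\lambda|c|)$ and can therefore be absorbed'' is a non sequitur: a small constant times a quantity that is not controlled by the energy (or is infinite) cannot be absorbed into $-2\mu\|\eta\|_{\mathcal W}^2$, and it cannot be absorbed into the good dissipative term $-\frac{\nu e^{-s}}{\lambda|c|}\|\eta_{xx}\rho^{1/2}\|_{L^2}^2$ either without (i) the vanishing conditions $\eta_x(0)=\eta_{xx}(0)=0$, which require a \emph{viscosity-modified} normalization (the paper takes $\hat c_\omega=(1-a)\hat u_x(0,\tau)+\nu C_\omega(\tau)\hat\omega_{xxx}(0,\tau)$ precisely to propagate $\hat\omega_x(0,\tau)=0$), and (ii) control of $\hat\omega_{xxx}$ pointwise near the origin. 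Point (ii) is why the paper enlarges the functional framework: it adds the hierarchy of higher-order weighted norms $E_k^2=\langle\hat\omega^{(k+1)},\hat\omega^{(k+1)}(1+\cos x)^k\rangle$, forms $I_{k_0}^2=\sum\mu^k E_k^2$ with $k_0\geq 4$, and uses Gagliardo--Nirenberg to bound $\|\hat\omega_{xxx}\|_{L^\infty(I)}\lesssim I_{k_0}$. Your assertion that ``no enlargement of the functional framework beyond $\mathcal{W}$ is needed'' is therefore exactly the missing idea, not a dispensable technicality.

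Two smaller issues. First, your choice $\eta(0)=0$, i.e.\ $\omega_0=\lambda\omega_a$, does not obviously produce $C^\infty$ initial data: Theorem \ref{them profile} only yields $\omega_a\in\mathcal{W}\subset H^2$, so you should instead take smooth data close to the profile (the paper perturbs around $-\sin(x)$, which sidesteps this). Second, hard-wiring the rescaled time via $s=-\log(1-\lambda|c|t)$ presupposes the exact blow-up rate; the paper instead lets $c_\omega(\tau)$ be determined dynamically by the normalization and proves $c_\omega(\tau)<0$ a posteriori by the bootstrap, which is what actually yields $t(\infty)<\infty$. Your modulation remark could repair this, but as written the rate is assumed rather than derived.
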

	We emphasize that, in contrast to the inviscid case, exact self-similar blow-up profiles do not exist due to the presence of viscosity. 	
	
	In our analysis, we employ the framework of dynamic rescaling to establish the formation of singularities. This formulation was first introduced by McLaughlin, Papanicolaou, and co-authors in their study
	of self-similar blow-up of the nonlinear Schr\"{o}dinger equation (see \cite{MP1986,LP1988}). It was later developed into a powerful modulation technique, and has been applied to various blow-up problems including the nonlinear Schr\"{o}dinger equation \cite{KM2006,MR2005}, the nonlinear heat equation \cite{MZ1997}, the generalized KdV equation \cite{MM2014}, and other
	dispersive problems. More recently, this approach has been successfully adapted to prove singularity formation in the gCLM models \cite{C2021, CHH2021}, the Euler equations \cite{CH2021, E2021}, and the Hou-Li model \cite{HW2024}.
	
	Our blow-up analysis for the inviscid model \ref{PJE} consists of several steps. In the first step, we reformulate the original equation \eqref{PJE} via dynamic rescaling, thereby transforming the singularity formation problem into the stability analysis of an approximate steady state in the rescaled variables. In the second step, we perform the stability analysis of this approximate steady state in the rescaled formulation, which primarily involves:
	\begin{itemize}
		\item the identification of an approximate steady state;
		\item energy estimates in a singularly weighted norm, providing both linear and nonlinear stability of this approximate profile.
	\end{itemize}
	By tracing back to the original variables, this yields finite time blow-up for the inviscid model \eqref{PJE}. In the third step, we establish convergence of the dynamically rescaled solution to the true self-similar profile. However, the blow-up analysis for model \ref{PJE} with viscosity is more challenging, as the viscous term does not provide damping and generates some bad terms when a singular weighted norm is employed. To address this, we establish the blow-up analysis for the viscous \ref{PJE} using an energy norm that combines a singular weighted energy norm with a sum of higher-order Sobolev norms, as employed in \cite{HW2024}.

	The paper is organized as follows. In Section \ref{section pre}, we present some preliminaries and useful lemmas. Section \ref{section COSSP} is devoted to the construction of a family of self-similar profiles for \ref{PJE} when $a$ is close to 1, together with their stability analysis and the proof of Theorem \ref{them profile}. Section \ref{section holder} contains the proof of Theorem \ref{the holder}, concerning finite-time blow-up from H\"{o}lder continuous initial data for the model \ref{PJE} in the case $a=1$. Finally, in Section \ref{section viscous}, we prove Theorem \ref{the viscous} through a special energy norm to estimate the viscous terms.
	
	\section{Preliminaries}\label{section pre}
	In this section, we collect several technical lemmas needed later. We first derive an explicit expression for the velocity field $u$ in terms of the vorticity $\omega$, under the assumption of odd symmetry. The following lemma is obtained through straightforward integration by parts.
	\begin{lemma}\label{le psi expression}
		Suppose that $\omega,u$ are odd, $2\pi$-periodic functions on $[-\pi,\pi]$, satisfying $u_{xx}=\omega.$ Then $u$ can be expressed as
		\begin{align}\label{psi expression}
			u(x) = \int_{0}^{x}(x-y)\omega(y) \, \mathrm{d}y + xu_{x}(0),
		\end{align}
		where $u_{x}(0)$ is given by
		\begin{align*}
			u_{x}(0) = \frac{1}{2\pi}\int_{0}^{2\pi}y\omega(y) \, \mathrm{d}y.
		\end{align*}
	\end{lemma}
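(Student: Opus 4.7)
The strategy is to integrate the relation $u_{xx} = \omega$ twice and pin down the two resulting integration constants using the odd symmetry and $2\pi$-periodicity of $u$. I would begin from the identity
$$u(x) = u(0) + x u_x(0) + \int_0^x (x-y)\omega(y)\, dy,$$
obtained either by two successive integrations or, equivalently, by Taylor's theorem with integral remainder. Since $u$ is odd, $u(0)=0$, and this constant drops out immediately, so we already recover the representation \eqref{psi expression} in terms of the single unknown $u_x(0)$.

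To identify $u_x(0)$, I would next exploit that $u$ is both odd and $2\pi$-periodic: this forces $u(\pi) = -u(-\pi) = -u(\pi)$, and hence $u(\pi)=0$. Evaluating the formula at $x=\pi$ yields
$$\pi u_x(0) = -\int_0^\pi (\pi - y)\omega(y)\,dy = -\pi\int_0^\pi \omega(y)\,dy + \int_0^\pi y\omega(y)\,dy.$$

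It then remains to rewrite the right-hand side as $\frac{1}{2\pi}\int_0^{2\pi} y\omega(y)\,dy$. I would do this using only the symmetries of $\omega$: the change of variable $y \mapsto 2\pi - y$ on $[\pi,2\pi]$, together with the identity $\omega(2\pi-y) = \omega(-y) = -\omega(y)$, gives
$$\int_\pi^{2\pi} y\omega(y)\,dy = -2\pi\int_0^\pi \omega(y)\,dy + \int_0^\pi y\omega(y)\,dy,$$
and adding $\int_0^\pi y\omega(y)\,dy$ to both sides produces
$$\frac{1}{2\pi}\int_0^{2\pi}y\omega(y)\,dy = \frac{1}{\pi}\int_0^\pi y\omega(y)\,dy - \int_0^\pi \omega(y)\,dy = u_x(0),$$
as required. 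The entire proof is a short bookkeeping exercise with symmetries; the only mildly fiddly step is matching the natural expression for $u_x(0)$ coming from $u(\pi)=0$ to the symmetric $\int_0^{2\pi}$ form in which the lemma is stated.
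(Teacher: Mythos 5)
Your proof is correct, and it matches the paper's (unwritten) argument: the paper simply remarks that the lemma follows by straightforward integration by parts, and your two integrations of $u_{xx}=\omega$ plus the determination of $u_x(0)$ from $u(\pi)=0$ via the substitution $y\mapsto 2\pi-y$ is exactly that computation carried out in full.
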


	\begin{lemma}\label{le sinfx}
		Let $\rho^{1/2} f \in L^{2}(\mathbb{T})$. Then the following identity holds.
		\begin{align*}
			\left\langle\sin(x) f_{x},f\rho\right\rangle = \frac{1}{2}\left\langle f^{2},\rho\right\rangle.
		\end{align*}
	\end{lemma}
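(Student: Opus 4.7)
The plan is to reduce the identity to a single integration by parts after rewriting the singular weight in a more tractable form. The key algebraic observation is that $\sin(x)\rho$ has a simple antiderivative-like relation to $\rho$ itself. Using the double-angle formula $\sin(x) = 2\sin(x/2)\cos(x/2)$, one computes
\begin{equation*}
\sin(x)\rho \;=\; \frac{2\sin(x/2)\cos(x/2)}{4\pi\sin^2(x/2)} \;=\; \frac{1}{2\pi}\cot\!\left(\frac{x}{2}\right),
\end{equation*}
and differentiating gives
\begin{equation*}
\bigl(\sin(x)\rho\bigr)_x \;=\; -\frac{1}{4\pi\sin^2(x/2)} \;=\; -\rho.
\end{equation*}

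With this in hand, I would rewrite $f f_x = \tfrac{1}{2}(f^2)_x$ and integrate by parts:
\begin{equation*}
\langle \sin(x) f_x, f\rho\rangle \;=\; \frac{1}{2}\int_{\mathbb{T}} (f^2)_x\,\sin(x)\rho\,\mathrm{d}x \;=\; -\frac{1}{2}\int_{\mathbb{T}} f^2\,\bigl(\sin(x)\rho\bigr)_x\,\mathrm{d}x \;=\; \frac{1}{2}\langle f^2,\rho\rangle,
\end{equation*}
which is precisely the claimed identity.

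The main obstacle, and really the only subtlety, is justifying the integration by parts given the singularity of $\rho \sim 1/x^2$ at $x=0$ (the weight is smooth away from the origin, and at $x=\pm\pi$ the factor $\cot(x/2)$ vanishes, so no boundary contribution appears there). At the origin, the oddness of $f$ built into the space $\mathcal{H}$ forces $f(0)=0$, while the hypothesis $\rho^{1/2}f \in L^2(\mathbb{T})$ encodes just enough vanishing of $f$ at $0$ to absorb the singularity: the integrand $f^2\sin(x)\rho \sim f^2/x$ vanishes as $x\to 0$ for $f$ with the minimal regularity at the origin. To make this fully rigorous, I would first prove the identity for smooth odd $f$ supported away from the origin (where all integrations by parts are classical) and then pass to the limit using a density argument in $\mathcal{H}$, exploiting the fact that both sides of the identity are continuous bilinear forms under the $\mathcal{H}$-norm (combined with the weighted bound on $f$). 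Alternatively, a direct dominated-convergence argument after excising a shrinking neighborhood $(-\varepsilon,\varepsilon)$ of the origin suffices, with the boundary terms $f^2(\pm\varepsilon)\sin(\pm\varepsilon)\rho(\pm\varepsilon)$ going to zero.
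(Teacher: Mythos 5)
Your proof is correct and follows essentially the same route as the paper: both rewrite $f f_x = \tfrac{1}{2}(f^2)_x$, identify $\sin(x)\rho$ as a multiple of $\cot(x/2)$ whose derivative is exactly $-\rho$, and conclude by a single integration by parts. The only difference is that you additionally justify the integration by parts near the singularity of $\rho$ at the origin (via a cutoff and the integrability encoded in $\rho^{1/2}f\in L^{2}$), a point the paper's one-line computation passes over silently.
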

	\begin{proof}[Proof of Lemma \ref{le sinfx}]
		By a direct integration by parts, it holds
		\begin{align*}
			\left\langle \sin(x) f_{x}, f \rho \right\rangle & = \frac{1}{4\pi} \int_{-\pi}^{\pi}\frac{\sin(x) (f^{2})_{x}}{2\sin^{2}(\frac{x}{2})} \, \mathrm{d}x = \frac{1}{4\pi}\int_{-\pi}^{\pi}\frac{\cos(\frac{x}{2})}{2\sin(\frac{x}{2})}(f^{2})_{x} \, \mathrm{d}x\\
			& = \frac{1}{4\pi}\int_{-\pi}^{\pi}\frac{f^{2}}{2\sin^{2}(\frac{x}{2})} \, \mathrm{d}x\\
			& = \frac{1}{2}\left\langle f^{2},\rho\right\rangle.
		\end{align*}
	\end{proof}
	
	Lemma \ref{le estimates} will be used in Section \ref{section nonlinear},\ref{section holder} and \ref{section viscous} to estimate the nonlinear terms.
	\begin{lemma}\label{le estimates}
		Let $\omega,u$ be odd, $2\pi$-periodic functions on $[-\pi,\pi]$ related by $u_{xx} = \omega$, with $\omega \in \mathcal{H}$. Then the following estimates hold.
		
		(1)
		\begin{align*}
			\|(u_{x} - u_{x}(0))\rho^{1/2}\|_{L^{2}} \lesssim \|\omega\|_{\mathcal{H}}.
		\end{align*}
		
		(2)
		\begin{align*}
			\|\omega\rho^{1/2}\|_{L^{\infty}}, \|u\rho^{1/2}\|_{L^{\infty}}, \|u_{x}\|_{L^{\infty}}\lesssim \|\omega\|_{\mathcal{H}}.
		\end{align*}
	\end{lemma}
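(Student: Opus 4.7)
My plan is to derive all three estimates from a single pointwise bound on $\omega$, the integral representation of $u$ from Lemma \ref{le psi expression}, and a Hardy-type inequality on $[0,\pi]$.

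The starting observation is that, since $\omega$ is odd, $\omega(0)=0$; hence for $x\in(0,\pi]$ one may write $\omega(x)=\int_{0}^{x}\omega_{y}(y)\,\mathrm{d}y$. Factoring $\omega_{y}=(\omega_{y}/\sin(y/2))\sin(y/2)$ and applying Cauchy-Schwarz yields
\begin{align*}
|\omega(x)|^{2}\leq 4\pi\,\|\omega\|_{\mathcal{H}}^{2}\int_{0}^{x}\sin^{2}(y/2)\,\mathrm{d}y=2\pi(x-\sin x)\,\|\omega\|_{\mathcal{H}}^{2}.
\end{align*}
The crucial elementary fact is that $(x-\sin x)/\sin^{2}(x/2)$ is uniformly bounded on $(0,\pi]$ (it behaves like $x/3$ near $0$ and is bounded near $\pi$), which immediately produces $\|\omega\rho^{1/2}\|_{L^{\infty}}\lesssim\|\omega\|_{\mathcal{H}}$. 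The equivalent pointwise bound $|\omega(x)|\lesssim|\sin(x/2)|\,\|\omega\|_{\mathcal{H}}$, integrated over $\mathbb{T}$, also gives $\|\omega\|_{L^{2}}\lesssim\|\omega\|_{\mathcal{H}}$, which I will use in the last step.

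For the two remaining pointwise bounds in (2), I would appeal to Lemma \ref{le psi expression}. The identity $u_{x}(x)-u_{x}(0)=\int_{0}^{x}\omega\,\mathrm{d}y$ combined with $|u_{x}(0)|\lesssim\|\omega\|_{L^{\infty}}\lesssim\|\omega\|_{\mathcal{H}}$ immediately gives $\|u_{x}\|_{L^{\infty}}\lesssim\|\omega\|_{\mathcal{H}}$. For $\|u\rho^{1/2}\|_{L^{\infty}}$, I would plug the pointwise bound $|\omega(y)|\lesssim|\sin(y/2)|\|\omega\|_{\mathcal{H}}$ into the representation $u(x)-xu_{x}(0)=\int_{0}^{x}(x-y)\omega(y)\,\mathrm{d}y$, obtaining $|u(x)-xu_{x}(0)|\lesssim x^{3}\|\omega\|_{\mathcal{H}}$; since both $x/\sin(x/2)$ and $x^{3}/\sin(x/2)$ are bounded on $[-\pi,\pi]$, the claim follows.

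For (1), set $g(x):=u_{x}(x)-u_{x}(0)$, so that $g$ is even on $\mathbb{T}$, $g(0)=0$, and $g'=\omega$. A standard Hardy inequality on $[0,\pi]$ (proved by integration by parts against $(1/x)'=-1/x^{2}$ followed by Cauchy-Schwarz, with a nonpositive boundary contribution at $x=\pi$) yields $\int_{0}^{\pi}g^{2}/x^{2}\,\mathrm{d}x\leq 4\int_{0}^{\pi}\omega^{2}\,\mathrm{d}x$. Comparing $1/\sin^{2}(x/2)$ with $\pi^{2}/x^{2}$ on $[0,\pi]$ and using evenness of $g$ converts this into the desired weighted bound, and the $L^{2}$ bound on $\omega$ from the first step closes the estimate. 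The only non-routine point is the verification that $(x-\sin x)/\sin^{2}(x/2)$ is bounded at the origin; this cancellation is precisely the compatibility between the smoothness of an odd $\omega$ near $0$ and the singular weight $\rho$, and is what allows the $\mathcal{H}$-norm to control $\omega/\sin(x/2)$ pointwise even though by definition it only controls $\omega_{x}/\sin(x/2)$ in $L^{2}$.
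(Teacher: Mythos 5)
Your proof is correct and follows essentially the same route as the paper's: a Hardy-type inequality for part (1), and for the pointwise bound on $\omega$ the identical Cauchy--Schwarz argument $|\omega(x)|^{2}\leq\big(\int_{0}^{x}\sin^{2}(y/2)\,\mathrm{d}y\big)\cdot 4\pi\|\omega\|_{\mathcal{H}}^{2}$ exploiting $\omega(0)=0$. The only differences are organizational: you derive $\|\omega\|_{L^{2}}\lesssim\|\omega\|_{\mathcal{H}}$ and $\|u_{x}\|_{L^{\infty}},\|u\rho^{1/2}\|_{L^{\infty}}\lesssim\|\omega\|_{\mathcal{H}}$ from the pointwise bound and the representation formula of Lemma \ref{le psi expression}, where the paper instead uses Poincar\'{e}'s inequality and Sobolev embedding (and, as a trivial aside, $(x-\sin x)/\sin^{2}(x/2)\sim 2x/3$ rather than $x/3$ near the origin, which does not affect the boundedness you actually use).
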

	\begin{proof}[Proof of Lemma \ref{le estimates}]
		To prove (1), by applying Hardy-type inequality and Poincar\'{e}'s inequality, we obtain
		\begin{align*}
			\|(u_{x} - u_{x}(0))\rho^{1/2}\|_{L^{2}} & = \bigg\|\left(\int_{0}^{x}\partial_{x}^{2}u(y)\, \mathrm{d} y\right)\rho^{1/2}\bigg\|_{L^{2}}\\
			& \lesssim \|u_{xx}\|_{L^{2}} = \|\omega\|_{L^{2}}\\
			& \lesssim \|\omega_{x}\|_{L^{2}} = \|\omega_{x}\rho^{1/2}\rho^{-1/2}\|_{L^{2}}\\
			&\lesssim \|\omega\|_{\mathcal{H}}.
		\end{align*}
		
		To prove (2), since $\omega$ is odd and periodic, we only need to estimate $\|\omega\rho^{1/2}\|_{L^{\infty}}$ on $[0,\pi]$. Due to $\sin(\frac{x}{2}) \geq \frac{2}{\pi}x$ for $x\in[0,\pi]$, we have
		\begin{align*}
			\bigg|\frac{\omega}{\sin(\frac{x}{2})}\bigg| & \lesssim \bigg|\frac{1}{x}\int_{0}^{x}\partial_{x}\omega(y)\, \mathrm{d}y\bigg|\\
			& \lesssim \frac{1}{x}\left(\int_{0}^{x} \sin^{2}(\frac{y}{2})\, \mathrm{d}y \right)^{1/2} \bigg\|\frac{\omega_{x}}{\sin(\frac{x}{2})}\bigg\|_{L^{2}}\\
			&\lesssim \|\omega_{x}\rho^{1/2}\|_{L^{2}},
		\end{align*}
		which implies that
		\begin{align*}
			\|\omega\rho^{1/2}\|_{L^{\infty}} \lesssim \|\omega\|_{\mathcal{H}}.
		\end{align*}
		For $\|u\rho^{1/2}\|_{L^{\infty}}$ and $\|u_{x}\|_{L^{\infty}}$, we use  Pincar\'{e}'s inequality to yield
		\begin{align*}
			\bigg|\frac{u}{\sin(\frac{x}{2})}\bigg| & \lesssim \|u_{x}\|_{L^{\infty}} \lesssim \|u_{xx}\|_{L^{2}} = \|\omega\|_{L^{2}}\\
			&\lesssim \|\omega_{x}\|_{L^{2}} \lesssim \|\omega\|_{\mathcal{H}}.
		\end{align*}
		Thus, we obtain the desired bounds
		\begin{align*}
			\|u\rho^{1/2}\|_{L^{\infty}}, \|u_{x}\|_{L^{\infty}}\lesssim \|\omega\|_{\mathcal{H}}.
		\end{align*}
	\end{proof}
	
	The following lemmas will be applied in Section \ref{section holder}.
	\begin{lemma}\label{le 1-x}
		For $x\in[0,1],a,b>0$, it holds
		\begin{align*}
			(1-x^{a})x^{b}\leq \frac{a}{b}.
		\end{align*}
	\end{lemma}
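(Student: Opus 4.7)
My plan is a direct one-variable optimization. Set $f(x) = (1-x^a)x^b = x^b - x^{a+b}$ on $[0,1]$. Since $f(0) = f(1) = 0$ and $f \geq 0$ on $[0,1]$, the maximum is attained at an interior critical point, so it suffices to locate this point and estimate $f$ there.

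Differentiating gives
\begin{equation*}
f'(x) = x^{b-1}\bigl(b - (a+b)x^a\bigr),
\end{equation*}
so the unique critical point in $(0,1)$ is $x_\ast = \bigl(\tfrac{b}{a+b}\bigr)^{1/a}$. Substituting back,
\begin{equation*}
f(x_\ast) = \Bigl(1 - \tfrac{b}{a+b}\Bigr)\Bigl(\tfrac{b}{a+b}\Bigr)^{b/a} = \tfrac{a}{a+b}\Bigl(\tfrac{b}{a+b}\Bigr)^{b/a}.
\end{equation*}
Because $\tfrac{b}{a+b} \in (0,1)$ and $b/a > 0$, the factor $\bigl(\tfrac{b}{a+b}\bigr)^{b/a}$ is bounded above by $1$, hence $f(x_\ast) \leq \tfrac{a}{a+b} \leq \tfrac{a}{b}$, which is the claim.

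There is no genuine obstacle here; the only thing to verify carefully is that the critical point $x_\ast$ really lies in $(0,1)$, which follows immediately from $0 < \tfrac{b}{a+b} < 1$ for $a,b>0$, and that the two elementary inequalities $\bigl(\tfrac{b}{a+b}\bigr)^{b/a} \leq 1$ and $\tfrac{a}{a+b} \leq \tfrac{a}{b}$ both hold under the standing assumption $a,b>0$. If a slightly sharper bound were desired one could instead use $1-x^a \leq -a\ln x$ on $(0,1]$ together with $\max_{x\in(0,1]} x^b\ln(1/x) = 1/(eb)$ to get $f \leq a/(eb)$, but for the stated inequality $a/b$ the optimization argument above is the shortest route.
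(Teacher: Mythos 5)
Your proof is correct. The paper proves the same inequality by a weighted Young's (AM--GM) inequality: it writes $(1-x^a)x^b=\tfrac{a}{b}\bigl(\tfrac{b}{a}(1-x^a)\bigr)(x^a)^{b/a}$ and applies Young's inequality to the product, obtaining $\tfrac{a}{b}\bigl(\tfrac{b/a}{1+b/a}\bigr)^{b/a+1}\leq\tfrac{a}{b}$. Your route via one-variable calculus is genuinely different in technique, though it is worth noting that the two arguments land on the same intermediate quantity: your critical value $f(x_\ast)=\tfrac{a}{a+b}\bigl(\tfrac{b}{a+b}\bigr)^{b/a}$ coincides with the paper's Young's-inequality bound (as it must, since Young's inequality is sharp exactly at your $x_\ast$), and both are then crudely relaxed to $a/b$. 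Your approach has the mild advantage of exhibiting the exact maximum and requiring no inequality beyond $\tfrac{a}{a+b}\leq\tfrac{a}{b}$; the paper's avoids any differentiation and the (minor) bookkeeping of confirming that the supremum is attained at an interior critical point. Both are complete and elementary; either suffices for the way the lemma is used in Lemma \ref{le res estimates}.
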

	\begin{proof}[Proof of Lemma \ref{le 1-x}]
		Applying Young's inequality, we have
		\begin{align*}
			(1-x^{a})x^{b}&=\frac{a}{b}\left(\frac{b}{a}(1-x^{a})\right)(x^{a})^{\frac{b}{a}}\\
			&\leq \frac{a}{b}\left(\frac{\frac{b}{a}(1-x^{a})+\frac{b}{a}x^{a}}{1+\frac{b}{a}}\right)^{\frac{b}{a}+1}=\frac{a}{b}\left(\frac{\frac{b}{a}}{1+\frac{b}{a}}\right)^{\frac{b}{a}+1}\leq \frac{a}{b}.
		\end{align*}
	\end{proof}
	
	\begin{lemma}\label{le res estimates}
		Let $\kappa=\frac{7}{8}<\frac{9}{10}<\alpha<1.$ Then, for all $x\in[-\pi,\pi]$ and $i=0,1,2,$ the following estimates hold.
		\begin{align}\label{omegares estimate}
			|\partial_{x}^{i}\omega_{res}|\lesssim |\alpha-1||\sin (x)|^{\kappa-i},
		\end{align}
		\begin{align}\label{psires estimate}
			|\partial_{x}^{i}\psi_{res}| + |\partial_{x}^{i}\psi_{res,x}| \lesssim |\alpha-1|,
		\end{align}
		\begin{align}\label{cancellation}
			\begin{aligned}
				& \quad |(\alpha-1)\bar{\omega}_{\alpha,x}-\sin(x)\omega_{res,xx}|+|\sin(x) \partial_{x}\left((\alpha-1)\bar{\omega}_{\alpha,x}-\sin(x)  \omega_{res,xx}\right)|\\
				& \lesssim\min\{|\alpha-1|,|x|^{2}\}|\sin(x)|^{\alpha-1}.
			\end{aligned}
		\end{align}
	Here, we used the following notations
	\begin{align*}
		\bar{\omega}(x) & = -\sin(x), \quad
		\bar{\omega}_{\alpha}(x) = -\mathrm{sign}(x) \, |\sin(x)|^{\alpha}, \quad
		\omega_{res}(x) = \bar{\omega}_{\alpha}(x) - \bar{\omega}(x),
	\end{align*}
	and
	\begin{align*}
		\bar{u}(x) &= \sin(x), \quad
		\bar{u}_{\alpha,xx}(x) = \bar{\omega}_{\alpha}(x), \quad
		u_{res}(x) = \bar{u}_{\alpha}(x) - \bar{u}(x).
	\end{align*}
	\end{lemma}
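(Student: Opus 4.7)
The plan is to work on the half-interval $x\in(0,\pi)$, where $\bar{\omega}_{\alpha}(x)=-\sin(x)^{\alpha}$ and $\omega_{res}(x)=\sin(x)-\sin(x)^{\alpha}=-\sin(x)^{\alpha}(1-\sin(x)^{1-\alpha})$, and then extend the bounds to $\mathbb{T}$ by odd symmetry. For \eqref{omegares estimate} with $i=0$ I apply Lemma \ref{le 1-x} with $y=\sin(x)$, $a=1-\alpha$, $b=\alpha-\kappa$; since the standing assumption $\alpha>9/10$ and $\kappa=7/8$ forces $\alpha-\kappa\ge 1/40$, the resulting constant $(1-\alpha)/(\alpha-\kappa)$ is a uniform multiple of $|1-\alpha|$. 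For $i=1,2$ I differentiate explicitly to get $\omega_{res,x}=\cos(x)[1-\alpha\sin(x)^{\alpha-1}]$ and $\omega_{res,xx}=-\sin(x)+\alpha\sin(x)^{\alpha}-\alpha(\alpha-1)\sin(x)^{\alpha-2}\cos^{2}(x)$, then split each expression into $(1-\alpha)$-multiples of smooth terms plus terms of the form $\sin(x)^{\beta}(1-\sin(x)^{1-\alpha})$ with $\beta-(\kappa-i)\ge 1/40$, and apply Lemma \ref{le 1-x} to each piece.

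For \eqref{psires estimate} I would use $u_{res,xx}=\omega_{res}$ together with Lemma \ref{le psi expression} to express $u_{res}$ and $u_{res,x}$ as integrals of $\omega_{res}$, with the constant $u_{res,x}(0)$ pinned down by the periodicity relation $u_{res}(\pi)=0$. Since the estimate of the previous step in particular yields $|\omega_{res}|\lesssim|1-\alpha|$ uniformly on $\mathbb{T}$, substitution in the integral representation gives the bounds on $u_{res}$ and its first two derivatives.

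The heart of the argument is the cancellation estimate \eqref{cancellation}. A direct calculation on $(0,\pi)$ gives
\[
(\alpha-1)\bar{\omega}_{\alpha,x}-\sin(x)\omega_{res,xx}=\alpha(\alpha-1)\sin(x)^{\alpha-1}\cos(x)(\cos x-1)+\sin^{2}(x)-\alpha\sin(x)^{\alpha+1},
\]
where the singular $\sin(x)^{\alpha-1}\cos^{2}(x)$ contributions present in the two quantities individually have cancelled exactly. The first term is handled using $|\cos x-1|\lesssim\min\{1,x^{2}\}$. For the remainder I split
\[
\sin^{2}(x)-\alpha\sin(x)^{\alpha+1}=\sin(x)^{\alpha+1}\bigl[\sin(x)^{1-\alpha}-1\bigr]+(1-\alpha)\sin(x)^{\alpha+1},
\]
bound the second summand directly by $|1-\alpha|\,x^{2}|\sin x|^{\alpha-1}$, and apply Lemma \ref{le 1-x} with $a=1-\alpha$, $b=2$ to the first summand, which gives $|\sin x|^{\alpha-1}\cdot\sin^{2}(x)\bigl(1-\sin(x)^{1-\alpha}\bigr)\lesssim\min\{|1-\alpha|,x^{2}\}|\sin x|^{\alpha-1}$ as required. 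The estimate with the extra factor $\sin(x)\partial_{x}$ is obtained by differentiating the displayed identity once more and applying exactly the same two cancellation mechanisms.

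The main obstacle is the cancellation step: each of $(\alpha-1)\bar{\omega}_{\alpha,x}$ and $\sin(x)\omega_{res,xx}$ separately carries a contribution of size $|1-\alpha||\sin x|^{\alpha-1}$ which would be too singular near $x=0$ for the subsequent energy estimates, and it is the precise matching of the coefficients of $\sin(x)^{\alpha-1}\cos(x)$ and $\sin(x)^{\alpha-1}\cos^{2}(x)$ that produces the $\min\{|1-\alpha|,x^{2}\}$ gain. The differentiated version of this identity is the most bookkeeping-intensive part, but introduces no new ideas beyond Lemma \ref{le 1-x} and the $(\cos x-1)$ factor.
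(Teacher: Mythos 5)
Your proposal is correct and follows essentially the same route as the paper: reduction to $x\ge 0$ by oddness, Lemma \ref{le 1-x} applied with the exponent gap $\alpha-\kappa\ge \tfrac{1}{40}$ for \eqref{omegares estimate}, Lemma \ref{le psi expression} for \eqref{psires estimate}, and the same explicit expansion $(\alpha-1)\bar{\omega}_{\alpha,x}-\sin(x)\omega_{res,xx}=\alpha(\alpha-1)(\sin x)^{\alpha-1}\cos(x)(\cos(x)-1)+\sin^{2}(x)-\alpha(\sin x)^{\alpha+1}$ driving the cancellation \eqref{cancellation}. Your term-by-term extraction of the $\min\{|\alpha-1|,|x|^{2}\}$ factor from that identity is, if anything, slightly cleaner than the paper's derivation of the two bounds separately before taking the minimum.
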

	\begin{proof}[Proof of Lemma \ref{le res estimates}]
		  By symmetry, it suffices to consider the case $x\geq 0$. We begin with the proof of \ref{omegares estimate}. Applying Lemma \ref{le 1-x}, we obtain
		\begin{align*}
			|\omega_{res}| & = \big|\big(\sin(x)\big)^{\alpha}-\sin(x)\big|\\
			& = \big|\big(\sin(x)\big)^{\kappa}(\sin x)^{\alpha-\kappa}\big(1-(\sin x)^{1-\alpha}\big)\big|\\
			&\lesssim \frac{1-\alpha}{\alpha-\kappa} |\sin(x)|^{\kappa} \lesssim (1-\alpha)|\sin(x)|^{\kappa}.
		\end{align*}
		A straightforward computation gives
		\begin{align*}
			\bar{\omega}_{\alpha,x} = -\alpha \big(\sin(x)\big)^{\alpha-1} \cos(x)
		\end{align*}
		and
		\begin{align*}
			\bar{\omega}_{\alpha,xx} = -\alpha(\alpha-1) \big(\sin(x)\big)^{\alpha-2} \cos^{2}(x) + \alpha(\sin(x))^{\alpha}.
		\end{align*}
		Applying Lemma \ref{le 1-x}, we deduce that
		\begin{align*}
			|\bar{\omega}_{\alpha,x}-\bar{\omega}_{x}| & =|\cos(x) \big(1-\alpha \big(\sin(x)\big)^{\alpha-1}\big)|\\
			& \lesssim \bigg|\big(\sin(x)\big)^{\kappa-1} \big(\sin(x)\big)^{\alpha-\kappa} \bigg(1 - \alpha \big(\sin(x)\big)^{1-\alpha}\bigg)\bigg|\\
			& \lesssim |\alpha - 1| |\sin(x)|^{\kappa - 1},
		\end{align*}
		The estimate for $i=2$ in \eqref{omegares estimate} follows from a similar argument and is omitted. The bound \ref{psires estimate} is a direct consequence of Lemma \ref{le psi expression}.
		
		Next, we address the estimate \eqref{cancellation}. First, it follows from \ref{omegares estimate} that
		\begin{align*}
			|(\alpha-1)\bar{\omega}_{\alpha,x}-\sin(x) \omega_{res,xx}| \lesssim |\alpha-1| |\sin(x)|^{\alpha-1}.
		\end{align*}
		Then, by direct computation, we obtain
		\begin{align*}
			&|(\alpha-1)\bar{\omega}_{\alpha,x} - \sin(x)\omega_{res,xx}|\\
			= & |-\alpha(\alpha-1)\big(\sin(x)\big)^{\alpha-1}\cos(x) + \alpha(\alpha-1) \big(\sin(x)\big)^{\alpha-1} \cos^{2}(x) - \alpha \sin(x) \big(\sin(x)\big)^{\alpha} + \sin(x) \bar{\omega}_{xx}|\\
			\leq & |\alpha(\alpha-1) \big(\sin(x)\big)^{\alpha-1} \big(\cos(x) - \cos^{2}(x)\big)| + \alpha \big(\sin(x)\big)^{\alpha+1} + \sin(x) |\bar{\omega}_{xx}|\\
			\leq & \big(\sin(x)\big)^{\alpha-1} |x|^{2} + \big(\sin(x)\big)^{\alpha+1}\\
			\lesssim & \big(\sin(x)\big)^{\alpha-1}|x|^{2},
		\end{align*}
		which leads to
		\begin{align*}
			|(\alpha-1)\bar{\omega}_{\alpha,x}-\sin(x) \omega_{res,xx}| \lesssim |\alpha-1|^{1/2}|x|.
		\end{align*}
		Similarly, we get
		\begin{align*}
			|\sin(x) \partial_{x} \big((\alpha-1)\bar{\omega}_{\alpha,x} -\sin(x) \omega_{res,xx}\big)| \lesssim \min\{|\alpha-1|, |x|^{2}\} |\sin(x)|^{\alpha-1}.
		\end{align*}
		 The proof of the Lemma \ref{le res estimates} is finished.
	\end{proof}
	
	Finally, we show that $\{\tilde{e}_{k}^{(o)}, k\geq 1\}$ forms a complete orthonormal basis for $\mathcal{H}$, a fact that will play a key role in the linear stability analysis in Section \ref{section COSSP}.
	\begin{lemma}\label{le cobasis}
		$\{\tilde{e}_{k}^{(o)}, k\geq 1\}$ is a complete orthonormal basis for $\mathcal{H}$.
	\end{lemma}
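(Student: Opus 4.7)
The plan is to exploit the trigonometric identity
\[
(\tilde{e}_{k}^{(o)})_{x} \;=\; \cos((k+1)x) - \cos(kx) \;=\; -2\sin\!\Bigl(\tfrac{(2k+1)x}{2}\Bigr)\sin\!\Bigl(\tfrac{x}{2}\Bigr),
\]
which exactly cancels one factor of the singular weight in $\|\cdot\|_{\mathcal{H}}$. This suggests changing variable to $g := f_{x}/\sin(x/2)$: the $\mathcal{H}$-inner product becomes (up to the constant $1/(4\pi)$) the ordinary $L^{2}(\mathbb{T})$-inner product, and each basis element $\tilde{e}_{k}^{(o)}$ is carried to a multiple of $\sin((2k+1)x/2)$. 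The problem thus reduces to completeness of the half-integer sine system on $[-\pi,\pi]$.

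Orthonormality is then an immediate computation: substituting the factorization and using $2\sin A\sin B = \cos(A-B) - \cos(A+B)$ gives
\[
\langle\tilde{e}_{j}^{(o)},\tilde{e}_{k}^{(o)}\rangle_{\mathcal{H}} = \frac{1}{\pi}\int_{\mathbb{T}}\sin\!\Bigl(\tfrac{(2j+1)x}{2}\Bigr)\sin\!\Bigl(\tfrac{(2k+1)x}{2}\Bigr)\,\mathrm{d}x = \delta_{jk},
\]
since $j-k$ and $j+k+1$ are nonzero integers whenever $j\neq k$ and $j,k\geq 1$.

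For completeness I will argue directly. Take $f\in\mathcal{H}$ orthogonal to every $\tilde{e}_{k}^{(o)}$ and set $g := f_{x}/\sin(x/2)$. Then $g \in L^{2}(\mathbb{T})$ (from $\|f\|_{\mathcal{H}}<\infty$) and $g$ is odd on $[-\pi,\pi]$ (since $f_{x}$ is even and $\sin(x/2)$ is odd). The orthogonality hypothesis becomes
\[
\int_{\mathbb{T}}g(x)\sin\!\Bigl(\tfrac{(2k+1)x}{2}\Bigr)\mathrm{d}x = 0 \quad\text{for all }k\geq 1,
\]
while the $k=0$ instance is automatic from $2\pi$-periodicity of $f$: $\int_{\mathbb{T}}g\sin(x/2)\,\mathrm{d}x = \int_{\mathbb{T}}f_{x}\,\mathrm{d}x = 0$. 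Since $\{\sin((2k+1)x/2)\}_{k\geq 0}$ forms a complete orthogonal system in the odd subspace of $L^{2}([-\pi,\pi])$ -- it is the Sturm--Liouville eigenbasis on $[0,\pi]$ for $-\partial_{x}^{2}$ with Dirichlet condition at $0$ and Neumann condition at $\pi$, extended oddly to $[-\pi,\pi]$ -- we conclude $g \equiv 0$, hence $f_{x}\equiv 0$, and oddness of $f$ gives $f \equiv 0$.

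The main conceptual obstacle is the apparent ``missing'' basis element: since only $k \geq 1$ appears, one might fear that $\{\tilde{e}_{k}^{(o)}\}_{k\geq 1}$ spans only a codimension-one subspace of $\mathcal{H}$, with $\sin(x/2)$ unaccounted for. The resolution is the observation above -- $2\pi$-periodicity of $f \in \mathcal{H}$ encodes precisely the extra orthogonality relation corresponding to the missing $\sin(x/2)$, exactly closing the gap. Beyond this point, everything is standard trigonometry and a Sturm--Liouville completeness result.
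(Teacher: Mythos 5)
Your proposal is correct and follows essentially the same route as the paper: both use the identity $\partial_{x}\tilde{e}_{k}^{(o)}/\sin(\tfrac{x}{2})=-2\sin((k+\tfrac12)x)$ to reduce orthonormality and completeness to those of the half-integer sine system, and both recover the missing $k=0$ orthogonality relation from the periodicity condition $\int_{\mathbb{T}}f_{x}\,\mathrm{d}x=0$. Your Sturm--Liouville justification of completeness of $\{\sin((k+\tfrac12)x)\}_{k\ge 0}$ in the odd subspace is a detail the paper simply asserts, but the argument is the same.
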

	\begin{proof}[Proof of Lemma \ref{le cobasis}]
		It holds that
		\begin{align*}
			\frac{\partial_{x}\tilde{e}_{k}^{(o)}}{\sin (\frac{x}{2})} = -2\sin((k+\frac{1}{2})x),
		\end{align*}
		for any \(k\geq 1\), which leads to
		\begin{align*}
			\big\langle\tilde{e}_{k}^{(o)},\tilde{e}_{l}^{(o)}\big\rangle_{\mathcal{H}} = \delta_{kl},
		\end{align*}
		for \(k,l\geq 1\). If $\xi \in \mathcal{H}$ satisfies
		\begin{align*}
			\big\langle\xi,\tilde{e}_{k}^{(o)}\big\rangle_{\mathcal{H}} = 0,
		\end{align*}
		for \(k\geq 1\), then
		\begin{align*}
			\int_{-\pi}^{\pi} \frac{\partial_{x}\xi}{\sin (\frac{x}{2})} \sin((k+\frac{1}{2})x) \, \mathrm{d}x = 0,
		\end{align*}
		for \(k\geq 1\). The above equality holds for $k=0$ as well, due to the fact that
		\begin{align*}
			\int_{-\pi}^{\pi}\partial_{x}\xi \, \mathrm{d}x = 0.
		\end{align*}
		Since $\{\sin((k+\frac{1}{2})x), k\geq 0\}$ is a complete basis for odd functions in $L^2(\mathbb{T})$, it follows that $\partial_{x} \xi = 0$, which implies $\xi = 0$.
	\end{proof}

	\section{Blow-up for the Inviscid Generalized Proudman-Johnson Equation}\label{section COSSP}
	In this section, we give the proof of Theorem \ref{them profile}. To establish singularity formation to \ref{PJE} with $\nu=0$, we make use of the dynamical rescaling formulation, which connects a self-similar singularity to the steady state of the rescaled system.
		
	    Let $(\omega(x,t),u(x,t))$ denote the solution to \ref{PJE}. It is straightforward to verify that
	\begin{align}\label{tilde ompsi}
		\tilde{\omega}(x,\tau) = C_{\omega}(\tau)\omega(x,t(\tau)), \quad \tilde{u}(x,\tau) = C_{\omega}(\tau)u(x,t(\tau))
	\end{align}
	are the solutions to the dynamic rescaling equations
	\begin{align}\label{dre}
		\tilde{\omega}_{\tau}(x,\tau) + \tilde{u}\tilde{\omega}_{x}(x,\tau) = c_{\omega}(\tau)\tilde{\omega} + a\tilde{u}_{x}\tilde{\omega}, \quad \tilde{u}_{xx} = \tilde{\omega}, \quad  \tau >0, \ x\in\mathbb{T},
	\end{align}
	where
	\begin{align}
		C_{\omega}(\tau) = \exp\left(\int_{0}^{\tau} c_{\omega}(s)ds\right), \quad t(\tau) = \int_{0}^{\tau} C_{\omega}(s)ds.
	\end{align}
	Here, we do not rescale the spatial variable $x,$ as we are interested in a blow-up solution that is neither focusing nor expanding over a fixed spatial period. Consequently, the scaling factors for $\omega,u$ are the same.
	
	If $c_{\omega}(\tau)\leq -c<0$ for some constant $c > 0$ and for all $\tau>0$, the solution $\tilde{\omega}$ is nontrivial, for instance $\|\tilde{\omega}(\tau,\cdot)\|_{L^{\infty}} \geq C >0$ for all $\tau > 0$, then we obtain
	\begin{align*}
		C_{\omega}(\tau) \leq e^{-c\tau}, \quad t(\infty) \leq \int_{0}^{\infty}e^{-c\tau}d\tau = c^{-1} < +\infty
	\end{align*}
	and that
	\begin{align*}
		|\omega(x,t(\tau))| = C_{\omega}(\tau)^{-1}|\tilde{\omega}(x,\tau)|,
	\end{align*}
	blows up at finite time $T = t(\infty)$.
		
	On the other hand, if $c_{\omega}(\tau) \geq c>0$ for some constant $c>0$ and $\tilde{\omega}(x,\tau)$ remains bounded, for example, $\|\tilde{\omega}(\tau,\cdot)\|_{L^{\infty}} \leq C $ for some constant $C>0$, then
	\begin{align*}
		C_{\omega}(\tau) \geq e^{c\tau}, \quad t(\tau) \geq \int_{0}^{\tau}e^{cs}ds.
	\end{align*}
	Consequently,
	\begin{align*}
		|\omega(x,t(\tau))| = C_{\omega}(\tau)^{-1}|\tilde{\omega}(x,\tau)| \leq e^{-c\tau}|\tilde{\omega}(x,\tau)| \leq e^{-c\tau}C,
	\end{align*}
	decays for large $\tau$. Since $t(\tau)\rightarrow \infty$ as $\tau\rightarrow\infty$, the Beale-Kato-Majda criterion implies global existence of the solution to \ref{PJE}.
	
		If $(\tilde{\omega},c_{\omega}(\tau))\to(\omega_{\infty},c_{\omega,\infty})$ as $\tau\to\infty$, where $(\omega_{\infty},c_{\omega,\infty})$ is a steady state of the dynamically rescaled equation \ref{dre}, then one may verify that
	\begin{align}
		\omega(x,t) = \frac{1}{1+c_{\omega,\infty}t}\omega_{\infty}(x),
	\end{align}
	is a self-similar solution to the original equation \ref{PJE}. In view of this correspondence, we do not distinguish the steady state of the dynamic rescaling equation \eqref{dre} from the self-similar profile of the original equation \eqref{PJE}.
	
	Now we give the proof of Theorem \ref{them profile}.
	\begin{proof}[Proof of Theorem \ref{them profile}]

	\textbf{Step 1. Perturbation Equations.}\label{section nonlinear}
	
	In this step, we employ the steady state corresponding to the case \(a = 1\) to construct an approximate steady state for \eqref{dre}, which is
	\begin{align}\label{ap steady state}
		\bar{\omega} (x) = -\sin(x), \quad \bar{u} (x) = \sin(x), \quad \bar{c}_{\omega} (\tau) = (1-a)\bar{u}_{x}(0) = 1-a.
	\end{align}
	We consider odd perturbations of the form
	\begin{align*}
		\tilde{\omega} (x,\tau) = \bar{\omega} (x) + \hat{\omega} (x,\tau),\, \tilde{u} (x,\tau) =  \bar{u} (x) + \hat{u} (x,\tau),\, c_{\omega}(\tau) =  \bar{c}_{\omega} (\tau) + \hat{c}_{\omega} (\tau), \quad  \tau>0, \ x\in\mathbb{T}
	\end{align*}
	and impose the following normalization condition on \(\hat{c}_{\omega}(\tau)\) in \eqref{dre}.
	\begin{align}\label{hat comega}
		\hat{c}_{\omega}(\tau) = (1-a) \hat{u}_{x}(0,\tau).
	\end{align}
	This normalization ensures the conservation of the quantity \(\bar{\omega}_{x}(0) + \hat{\omega}_{x}(0,\tau)\) over time. Indeed, we have
	\begin{align*}
		\frac{\mathrm{d}}{\mathrm{d}\tau} (\hat{\omega}_{x}(0,t) + \bar{\omega}_{x}(0)) = & a \big(\hat{u}_{x}(0,\tau) + \bar{u}_{x}(0)\big) \big(\hat{\omega}_{x}(0,\tau) + \hat{\omega}_{x}(0,\tau)\big)  - \big(\hat{u}_{x}(0,\tau) + \bar{u}_{x}(0)\big) \big(\hat{\omega}_{x}(0,\tau) + \bar{\omega}_{x}(0)\big) \\
		& + \big(\hat{c}_{\omega} + \bar{c}_{\omega}\big) \big(\hat{\omega}_{x}(0,\tau) + \hat{\omega}_{x}(0,\tau)\big) =0.
	\end{align*}
	Then, the perturbation equation takes the following form
	\begin{equation}\label{perte}
		\begin{aligned}
			\hat{\omega}_{\tau}  = & -\bar{u} \hat{\omega}_{x} + a\bar{u}_{x}\hat{\omega} + a\bar{\omega}\hat{u}_{x} - \bar{\omega}_{x} \hat{u} + \bar{c}_{\omega} \hat{\omega} + \bar{\omega} \hat{c}_{\omega} + N(\hat{\omega}) + F(\bar{\omega})\\
		 :=	& \mathcal{L}_{a} \hat{\omega} + N(\hat{\omega}) + F(\bar{\omega}),
		\end{aligned}
	\end{equation}
	where \(\mathcal{L}_{a}\) denotes the linearized operator, \(N(\hat{\omega})\) and \(F(\bar{\omega})\) represent the nonlinear and error  terms, respectively, which are
	\begin{align}\label{non term}
		\begin{aligned}
			N(\hat{\omega}) & = a \hat{u}_{x} \hat{\omega} - \hat{u} \hat{\omega}_{x} + \hat{c}_{\omega}\hat{\omega},\\
			 F(\bar{\omega}) & = (a\bar{u}_{x} + \bar{c}_{\omega})\bar{\omega} - \bar{u} \bar{\omega}_{x} = (1-a)\sin(x) \big(\cos(x) - 1\big).
		\end{aligned}
	\end{align}
	Plugging the approximate steady state \ref{ap steady state} and the normalization condition into \ref{perte} yields
	\begin{equation}\label{definition L}
		\begin{aligned}
			\mathcal{L}_{1}\hat{\omega} = & -\sin(x) \hat{\omega}_{x} + \cos(x) \hat{\omega} - \sin(x) \hat{u}_{x} + \cos(x) \hat{u},\\
			\mathcal{L}_{a} \hat{\omega} = & -\sin(x) \hat{\omega}_{x} + (a\cos(x) + 1 - a)\hat{\omega} \\
			& - (a\hat{u}_{x} + (1-a)\hat{u}_{x}(0,\tau))\sin(x) + \cos(x) \hat{u}\\
			= & \mathcal{L}_{1}\hat{\omega} + (1-a)\big(- \hat{u}_{x}(0,\tau)\sin(x) + \hat{\omega} - \cos(x) \hat{\omega} + \sin(x) \hat{u}_{x}\big)\\
			:= & \mathcal{L}_{1} \hat{\omega} + (1-a)\mathcal{A}\hat{\omega}.
		\end{aligned}
	\end{equation}
	To study the stability of the dynamically rescaled equation and its convergence to a steady state, we perform a weighted $\dot{H}^1$ estimate using a singular weight $\rho$, with the corresponding weighted norm defined by
	\begin{align}
		\rho = \frac{1}{4\pi \sin^2\left(\frac{x}{2}\right)}, \quad \|f\|_{\mathcal{H}}^{2} = \left\langle f_x^2, \rho \right\rangle = \int_{\mathbb{T}} f_x^{2} \rho\, \mathrm{d}x.
	\end{align}
	For initial perturbation satisfying $\hat{u}_{x}(0,0)=0$, it follows that $u_{x}(0,\tau)=0$ for all $\tau>0$, ensuring that $\|\hat{u}\|_{\mathcal{H}}$ is well-defined.
	
	\textbf{Step 2. Nonlinear stability analysis.}\label{subsection nonlinear}
	
	The dominant part $\mathcal{L}_1 \hat{\omega}$ of the linearized operator provides a damping effect. We state a lemma that will be used subsequently, of which proof is given in the Appendix.
	\begin{lemma}\label{le operator}
		It holds that
		\begin{align}\label{eq:estimate-L1}
			\left\langle\mathcal{L}_{1} \hat{\omega},\hat{\omega} \right\rangle_{\mathcal{H}} \leq -\frac{1}{2}\|\hat{\omega}\|_{\mathcal{H}}^{2}.
		\end{align}
	\end{lemma}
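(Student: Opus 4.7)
My plan is to diagonalize the linear operator $\mathcal{L}_1$ using the orthonormal basis $\{\tilde{e}_k^{(o)}\}_{k\geq 1}$ of $\mathcal{H}$ furnished by Lemma \ref{le cobasis} (which is presumably why that basis was introduced in the first place). Writing $\tilde{e}_k^{(o)} = f_{k+1} - f_k$ with $f_n(x) := \sin(nx)/n$, the task reduces to computing $\mathcal{L}_1 f_n$. Taking $\hat{\omega} = f_n$, one has $\hat{u} = -f_n/n^2$, and substituting into the definition of $\mathcal{L}_1$ and expanding $\sin(x)\cos(nx)$ and $\cos(x)\sin(nx)$ by product-to-sum identities produces, after cancellation, the compact formula
$$\mathcal{L}_1 f_n = -\frac{(n^2-1)^2}{2n^3}\bigl(f_{n+1} - f_{n-1}\bigr).$$
The vanishing of the prefactor at $n=1$ is a consistency check, reflecting that $\bar{\omega} = -\sin(x)$ is a steady state.

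Using $f_{n+1} - f_{n-1} = \tilde{e}_n^{(o)} + \tilde{e}_{n-1}^{(o)}$ (with the convention $\tilde{e}_0^{(o)} := 0$), I would rewrite $\mathcal{L}_1 \tilde{e}_k^{(o)}$ in tri-diagonal form,
$$\mathcal{L}_1 \tilde{e}_k^{(o)} = \beta_k \tilde{e}_{k-1}^{(o)} + (\beta_k - \alpha_k)\tilde{e}_k^{(o)} - \alpha_k \tilde{e}_{k+1}^{(o)},\qquad \beta_k := \frac{(k^2-1)^2}{2k^3},\quad \alpha_k := \frac{k^2(k+2)^2}{2(k+1)^3}.$$
The crucial algebraic observation is the identity $\alpha_k = \beta_{k+1}$, which follows from $(k+1)^2 - 1 = k(k+2)$. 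Consequently the off-diagonal matrix entries $M_{jk} = \langle \mathcal{L}_1 \tilde{e}_k^{(o)}, \tilde{e}_j^{(o)}\rangle_{\mathcal{H}}$ satisfy $M_{k,k+1} + M_{k+1,k} = \beta_{k+1} - \alpha_k = 0$, i.e. the off-diagonal part is antisymmetric and contributes nothing to the quadratic form. Expanding $\hat{\omega} = \sum_{k\geq 1} a_k \tilde{e}_k^{(o)}$ (with the extension to general $\hat{\omega}\in\mathcal{H}$ by density) then collapses the estimate to a diagonal sum,
$$\langle \mathcal{L}_1 \hat{\omega}, \hat{\omega}\rangle_{\mathcal{H}} = \sum_{k\geq 1}(\beta_k - \alpha_k)\, a_k^2.$$

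It then remains to prove the scalar inequality $\beta_k - \alpha_k \leq -\tfrac{1}{2}$ for every $k\geq 1$. Expanding $(k^2-1)^2 = k^4 - 2k^2 + 1$, and using the analogous expansion for $\alpha_k = \beta_{k+1}$, one obtains
$$\beta_k - \alpha_k = -\frac{1}{2} - \Bigl(\frac{1}{k} - \frac{1}{k+1}\Bigr) + \frac{1}{2}\Bigl(\frac{1}{k^3} - \frac{1}{(k+1)^3}\Bigr),$$
so the claim is equivalent to the monotonicity of $\phi(k) := k^{-3} - 2k^{-1}$ on the positive integers. This is immediate from $\phi'(x) = (2x^2-3)/x^4 \geq 0$ for $x\geq 2$, combined with the direct check $\phi(1) = -1 < -7/8 = \phi(2)$. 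Summing then gives $\langle \mathcal{L}_1\hat{\omega},\hat{\omega}\rangle_{\mathcal{H}} \leq -\tfrac{1}{2}\sum a_k^2 = -\tfrac{1}{2}\|\hat{\omega}\|_{\mathcal{H}}^2$.

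The main obstacle is really Step 1: carrying out the trigonometric expansion so that the compact formula for $\mathcal{L}_1 f_n$ crystallizes, and then noticing the miraculous identity $\alpha_k = \beta_{k+1}$ that makes the off-diagonal part antisymmetric. Once those two structural facts are in hand, everything else reduces to a routine one-variable monotonicity check. A direct weighted-$\dot H^1$ computation via Lemma \ref{le sinfx} is also feasible and reduces $\langle\mathcal{L}_1\hat{\omega},\hat{\omega}\rangle_{\mathcal{H}}$ to $-\tfrac{1}{2}\|\hat{\omega}\|_{\mathcal{H}}^2 - \langle \hat{\omega}^2,\rho\rangle - \int \sin(x)\hat{u}\hat{\omega}_x\rho\,\mathrm{d}x$, but controlling the last cross term with the correct absolute constant $\tfrac{1}{2}$ appears to require precisely the spectral information produced by the basis computation above, so I would commit to the diagonalization route.
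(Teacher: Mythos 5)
Your proposal is correct and takes essentially the same route as the paper's Appendix proof: both diagonalize $\mathcal{L}_{1}$ in the basis $\{\tilde{e}_{k}^{(o)}\}$, arrive at the same tridiagonal representation (your $\beta_{k}$ is the paper's $d_{k}=\frac{(k+1)^{2}(k-1)^{2}}{2k^{3}}$ and your identity $\alpha_{k}=\beta_{k+1}$ is exactly the cancellation of the off-diagonal terms in the quadratic form), and both reduce the estimate to the scalar bound $d_{k+1}-d_{k}\geq\frac{1}{2}$. The only substantive difference is that the paper is more careful about justifying the termwise summation for general $\hat{\omega}\in\mathcal{H}$ (via semigroup theory or Galerkin approximation) where you appeal to density, but this does not affect the core argument.
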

	For the term $\left\langle \mathcal{A}\hat{\omega}, \hat{\omega} \right\rangle_{\mathcal{H}}$, invoking Lemmas \ref{le sinfx} and \ref{le estimates}, we obtain
	\begin{align}
		\begin{aligned}\label{eq:estimate-Aw}
			\left\langle \mathcal{A}\hat{\omega}, \hat{\omega} \right\rangle_{\mathcal{H}} & = \left\langle \sin(x) \hat{u}_{xx},\hat{\omega}_{x} \rho\right\rangle + \left\langle \cos(x)(\hat{u}_{x} -\hat{u}_{x}(0,\tau)), \hat{\omega}_{x} \rho \right\rangle + \left\langle \sin(x) \hat{\omega}, \hat{\omega}_{x} \rho \right\rangle\\
			& \quad + \left\langle(1-\cos(x)),\hat{\omega}_{x}^{2} \rho \right\rangle\\
			&= 2\left\langle \sin(x) \hat{\omega}, \hat{\omega}_{x} \rho \right\rangle + \left\langle \cos(x) (\hat{u}_{x} - \hat{u}_{x}(0,\tau)), \hat{\omega}_{x} \rho \right\rangle + \left\langle(1-\cos(x)), \hat{\omega}_{x}^{2} \rho \right\rangle\\
			&\lesssim \left(\|\hat{\omega}\rho^{1/2}\|_{L^{\infty}} + \|(\hat{u}_{x} - \hat{u}_{x}(0,\tau)) \rho^{1/2}\|_{L^{2}} + \|\hat\omega\|_{\mathcal{H}} \right) \|\hat\omega\|_{\mathcal{H}}\\
			& \lesssim \|\hat{\omega}\|_{\mathcal{H}}^{2}.
		\end{aligned}
	\end{align}
	Hence,
	\begin{align}\label{eq:linear estimate}
		\begin{aligned}
			\left\langle \mathcal{L}_{a}\hat{\omega}, \hat{\omega} \right\rangle_{\mathcal{H}} & = \left\langle \mathcal{L}_{1}\hat{\omega}, \hat{\omega} \right\rangle_{\mathcal{H}} + (1-a)\left\langle \mathcal{A}\hat{\omega}, \hat{\omega} \right\rangle_{\mathcal{H}}\\
			& \leq - (\frac{1}{2} - C|1-a|) \|\hat{\omega} (\tau)\|_{\mathcal{H}}^{2}.
		\end{aligned}
	\end{align}

	Applying \ref{eq:linear estimate}, we obtain
	\begin{align}
		\frac{1}{2} \frac{\mathrm{d}}{\mathrm{d} \tau} \|\hat{\omega} (\tau) \|_{\mathcal{H}}^{2} \leq - (\frac{1}{2} - C|1-a|) \|\hat{\omega} (\tau)\|_{\mathcal{H}}^{2} + \left\langle N(\hat{\omega}), \hat{\omega}\right\rangle_{\mathcal{H}} + \left\langle F(\bar{\omega}), \hat{\omega}\right\rangle_{\mathcal{H}},
	\end{align}
	where \(\mathcal{A}\hat{\omega}, N(\hat{\omega})\) and \(F(\bar{\omega})\) are defined as in \ref{non term} and \ref{definition L} respectively.
	We now proceed with the stability analysis.
	For the nonlinear term, we apply Lemma \ref{le estimates} to obtain
	\begin{align}\label{eq:estimate-Nw}
		\begin{aligned}
			\left\langle N(\hat{\omega}), \hat{\omega}\right\rangle_{\mathcal{H}} & = a \left\langle
			\hat{\omega}^{2}, \hat{\omega}_{x} \rho \right\rangle + (a-1) \left\langle \hat{u}_{x}, \hat{\omega}_{x}^{2} \rho \right\rangle + (1-a) \left\langle \hat{u}_{x}(0,\tau), \hat{\omega}_{x}^{2} \rho \right\rangle  - \left\langle \hat{u} \hat{\omega}_{xx}, \hat{\omega}_{x} \rho \right\rangle \\
			& \lesssim \left(\|\hat{\omega}\rho^{1/2}\|_{L^{\infty}} + \| \hat{u}_{x}\|_{L^{\infty}}\right) \|\hat{\omega}\|_{\mathcal{H}}^{2} - \left\langle \hat{u} \hat{\omega}_{xx}, \hat{\omega}_{x} \rho \right\rangle\\
			& \lesssim \|\hat{\omega}\|_{\mathcal{H}}^{3}.
		\end{aligned}
	\end{align}
	For the last term, we have employed integration by parts together with Lemma \ref{le estimates} to derive
	\begin{align*}
		\left\langle \hat{u} \hat{\omega}_{xx}, \hat{\omega}_{x} \rho \right\rangle & =  - \frac{1}{2} \left\langle \hat{u}_{x}, \hat{\omega}_{x}^{2} \rho \right\rangle + \frac{1}{2} \big\langle \frac{\cos(\frac{x}{2}) \hat{u}}{\sin(\frac{x}{2})}, (\hat{\omega}_{x})^{2} \rho \big\rangle\\
		& \lesssim  \left(\|\hat{u}_{x}\|_{L^{\infty}} + \bigg\|\frac{\hat{u}}{\sin(\frac{x}{2})}\bigg\|_{L^{\infty}}\right) \|\hat{\omega}\|_{\mathcal{H}}^{2}\\
		& \lesssim  \|\hat{\omega}\|_{\mathcal{H}}^{3}.
	\end{align*}
	For the error term, we deduce from Lemma \ref{le estimates} that
	\begin{align}\label{eq:estimate-Fw}
		\begin{aligned}
			\left\langle F(\bar{\omega}), \hat{\omega} \right\rangle_{\mathcal{H}} & = (1-a) \left(\left\langle \cos(x)(\cos(x) -1 ), \hat{\omega}_{x} \rho \right\rangle - \left\langle\sin^{2}(x), \hat{\omega}_{x} \rho \right\rangle \right)\\
			& \lesssim  |1-a| \|\hat{\omega}\|_{\mathcal{H}}.
		\end{aligned}
	\end{align}
	
	\textbf{Step 3. Finite time blow-up analysis.}\label{subsection blowup}
	
	Combining the above estimates \ref{eq:estimate-Aw}, \ref{eq:estimate-Nw} and \ref{eq:estimate-Fw}, we arrive at
	\begin{align}
		\frac{\mathrm{d}}{\mathrm{d} \tau} \|\hat{\omega} (\tau) \|_{\mathcal{H}} \leq - (\frac{1}{2} - C|1-a|) \|\hat{\omega} (\tau)\|_{\mathcal{H}} + C\|\hat{\omega} (\tau)\|_{\mathcal{H}}^{2} + C|1-a|.
	\end{align}
	By employing a standard bootstrap argument, one can show the existence of absolute constants  $\delta,C>0$ such that if $|1-a|<\delta$ and $\|\hat{\omega} (0)\|_{\mathcal{H}} \leq |1-a|$, then we have $\|\hat{\omega} (\tau)\|_{\mathcal{H}} \leq C|1-a|$ for all time. In particular, we have
	\begin{align*}
		|\hat{u}_{x} (0) | \leq \|\hat{u}_{x}\|_{L^{\infty}} \lesssim \|\hat{\omega}\|_{\mathcal{H}} \leq C|1-a|.
	\end{align*}
	It follows that
	\begin{align*}
		c_{\omega} (\tau)  = \hat{c}_{\omega} (\tau) + \bar{c}_{\omega}  = (1-a) \left(\hat{u}_{x}(0,\tau) + \bar{u}_{x}(0)\right) = (1-a)(1 + \hat{u}_{x}(0,\tau)) < 0
	\end{align*}
	 for \(a \in (1, 1+\delta)\).
	
	 Consequently, the solution to \ref{PJE} blows up in finite time for $a\in (1,1+\delta)$, completing the proof of part (1) of Theorem \ref{them profile}.
	
	\textbf{Step 4. Convergence to self-similar profiles.}\label{step3:convergence}
	
We now turn to the proof of part (2) of Theorem \ref{them profile}.	To establish the convergence of the solution to a steady state, we need to estimate weighted norms of $\hat{\omega}_{\tau} $. As pointed out in \cite{CHH2021}, obtaining stability estimates in higher-order Sobolev norms is crucial for closing the estimate. We utilize the weighted derivative $D_{x} = \sin(x) \partial_{x}$, which has been similarly employed in \cite{CH2021,E2021,EG2021,C2021} for stability analysis.
	
	In order to get the estimate for $\|\hat{\omega}_{\tau}\|_{\mathcal{H}}$, we first establish the estimate for $\|\hat{\omega}\|_{\mathcal{W}}$. Recalling the definition of $\mathcal{L}_{a}$ in \ref{definition L}, we obtain
	\begin{align*}
		\mathcal{L}_{a}  = & -\sin(x) \hat{\omega}_{x} + (a\cos(x) + 1 -a) \hat{\omega} - \left(a \hat{u}_{x} + (1-a) \hat{u}_{x}(0,\tau)\right) \sin(x) + \cos xu\\
		= & (-D_{x}\hat{\omega} + \hat{\omega} \cos(x)) + (1-a)(1-\cos(x)) \hat{\omega} + a (-\sin(x) \hat{u}_{x} + \cos(x) \hat{u}) \\
		& + (1-a) \left(\hat{u}\cos(x) - \hat{u}_{x}(0,\tau) \sin(x) \right)\\
		:= & \mathcal{T}_{1} + \mathcal{T}_{2} + \mathcal{T}_{3} + \mathcal{T}_{4}.
	\end{align*}
	 The weighted derivative $D_{x}$ satisfies the Leibniz rule
	\begin{align*}
		D_{x}(fg) = gD_{x}f + fD_{x}g.
	\end{align*}
	Then, direct calculations give
	\begin{align*}
		\partial_{x}D_{x}\mathcal{T}_{1} = & \partial_{x}(-\sin (x) \partial_{x}D_{x}\omega + \cos(x) D_{x}\omega + \omega D_{x}\cos(x))\\
		= & -\sin(x) \partial_{x}\partial_{x}D_{x}\omega - \cos(x) \partial_{x}D_{x}\omega - \sin(x) D_{x}\omega + \cos(x) \partial_{x}D_{x}\omega\\
		& + \partial_{x}\omega D_{x}\cos(x) + \omega\partial_{x}D_{x}\cos(x)\\
		= & -\sin(x) \partial_{x}\partial_{x}D_{x}\omega - \sin^{2}(x) \partial_{x}\omega - \sin^{2}(x) \partial_{x}\omega - 2\sin(x) \cos(x) \omega,
	\end{align*}
	
	\begin{align*}
		\partial_{x} D_{x}\mathcal{T}_{2} = & (1-a) \partial_{x}\left((1-\cos(x)) D_{x}\hat{\omega} + \hat{\omega} D_{x}(1-\cos(x)) \right)\\
		= & (1-a)\partial_{x} \left((1-\cos(x)) D_{x}\hat{\omega} + \hat{\omega} \sin^{2}(x) \right),
	\end{align*}
	
	\begin{align*}
		\partial_{x} D_{x} \mathcal{T}_{3} = & a \partial_{x}(-\sin(x) D_{x}\hat{u}_{x} - \hat{u}_{x} D_{x}\sin(x) + \cos(x) D_{x}\hat{u} + \hat{u} D_{x}\cos(x))\\
		= & a \partial_{x} (-\sin^{2}(x) u_{xx} -\sin(x) \cos(x) \hat{u}_{x} + \sin(x) \cos(x) \hat{u}_{x} -\sin^{2}(x) \hat{u})\\
		= & a \partial_{x}(-\sin^{2}(x) \hat{\omega} -\sin^{2}(x) \hat{u})
	\end{align*}
	and
	\begin{align*}
		\partial_{x} D_{x} \mathcal{T}_{4} = & (1-a) \partial_{x}\left(\sin(x)\cos (x)\hat{u}_{x} - \sin^{2}(x) \hat{u} - \hat{u}_{x}(0,\tau) \sin(x)\cos(x) \right)\\
		= & (1-a)\partial_{x} \left(\sin (x) \cos(x) (\hat{u}_{x} - \hat{u}_{x}(0,\tau)) - \sin^{2}(x) \hat{u}\right)\\
		= &  (1-a)(\hat{u}_{x} - \hat{u}_{x}(0,\tau)) \partial_{x}(\sin(x)\cos(x)) + (1-a) \sin(x)\cos(x) \hat{\omega} - (1-a)\partial_{x}(\sin^{2}(x)\hat{u}).
	\end{align*}
	Therefore, we obtain
		\begin{align*}
			\left\langle \partial_{x}D_{x} \mathcal{L}_{a}, \partial_{x}D_{x} \hat{\omega} \rho\right\rangle = & \left\langle \partial_{x}D_{x}\mathcal{T}_{1}, \partial_{x}D_{x} \hat{\omega} \rho\right\rangle + \left\langle\partial_{x}D_{x}\mathcal{T}_{2},\partial_{x}D_{x} \hat{\omega} \rho\right\rangle \\
			& + \left\langle \partial_{x}D_{x}\mathcal{T}_{3},\partial_{x}D_{x} \hat{\omega} \rho\right\rangle +  \left\langle \partial_{x}D_{x}\mathcal{T}_{4}, \partial_{x}D_{x} \hat{\omega}\rho\right\rangle\\
			= & \left\langle -\sin(x) \partial_{x}\partial_{x}D_{x} \hat{\omega} + (1-a)(1-\cos(x)) \partial_{x}D_{x}\hat{\omega} + \mathrm{l.o.t.}, \partial_{x}D_{x} \hat{\omega} \rho \right\rangle\\
			\leq & (-\frac{1}{2} + |1-a|) \|\partial_{x}D_{x} \hat{\omega} \rho^{1/2}\|_{L^{2}}^{2} + C\|\hat{\omega}\|_{\mathcal{H}} \|\partial_{x}D_{x} \hat{\omega}\rho^{1/2}\|_{L^{2}}\\
			\leq & (-\frac{1}{4} + |1-a|) \|\partial_{x}D_{x} \hat{\omega} \rho^{1/2}\|_{L^{2}}^{2} + C\|\hat{\omega}\|_{\mathcal{H}}^{2}.
		\end{align*}
	The nonlinear term is estimated as follows.
	\begin{align*}
		\partial_{x}D_{x} N(\hat{\omega}) = & a\partial_{x}(\sin(x) \hat{u}_{xx} \hat{\omega} + \sin(x) \hat{u}_{x} \hat{\omega}_{x}) - \partial_{x}(\sin(x) \hat{u}_{x}\hat{\omega}_{x} + \sin(x) \hat{u} \hat{\omega}_{xx})\\
		& + (1-a)\hat{u}_{x}(0,\tau) \partial_{x}D_{x} \hat{\omega}\\
		= & a\partial_{x}(\sin(x) (\hat{\omega})^{2}) + (a-1)\partial_{x}(\sin(x) \hat{u}_{x} \hat{\omega}_{x}) - \partial_{x}(\sin(x) \hat{u} \hat{\omega}_{xx})\\
		& + (1-a)\hat{u}_{x}(0,\tau) \partial_{x}D_{x}\hat{\omega}\\
		:= & N_{1} + N_{2} + N_{3} + N_{4}.
	\end{align*}
	Using H\"{o}ler's inequality, Lemma \ref{le estimates} and Poincar\'{e}'s inequality, we obtain
	\begin{align*}
		\left\langle N_{1}, \partial_{x}D_{x} \hat{\omega}\rho\right\rangle = & \left\langle \cos(x) \omega^{2}, \partial_{x}D_{x} \hat{\omega} \rho\right\rangle + \left\langle 2\sin(x) \hat{\omega} \hat{\omega}_{x}, \partial_{x}D_{x} \hat{\omega} \rho \right\rangle\\
		\lesssim & \|\hat{\omega} \rho^{1/2}\|_{L^{\infty}} \|\hat{\omega}\|_{L^{2}} \|\partial_{x}D_{x} \hat{\omega} \rho^{1/2}\|_{L^{2}}\\
		\lesssim & \|\hat{\omega}\|_{\mathcal{W}}^{3}.
	\end{align*}
	Similarly, we have
	\begin{align*}
		\left\langle N_{2} ,\partial_{x}D_{x} \hat{\omega} \rho\right\rangle = & \left\langle \hat{u}_{xx} D_{x} \hat{\omega}, \partial_{x}D_{x} \hat{\omega} \rho \right\rangle + \left\langle \hat{u}_{x} \partial_{x}D_{x}\hat{\omega}, \partial_{x}D_{x}\hat{\omega} \rho\right\rangle \\
		\lesssim & \|\hat{\omega}\|_{L^{\infty}} \|\hat{\omega}\|_{\mathcal{H}} \|\partial_{x}D_{x}\hat{\omega} \rho^{1/2}\|_{L^{2}} + \|\hat{u}_{x}\|_{L^{\infty}} \|\partial_{x}D_{x} \hat{\omega} \rho\|_{L^{2}}^{2}\\
		\lesssim & \|\hat{\omega}\|_{\mathcal{W}}^{3}
	\end{align*}
	and
	\begin{align*}
		\left\langle N_{4} , \partial_{x}D_{x} \hat{\omega} \rho\right\rangle = & \left\langle (1-a) \hat{u}_{x}(0,\tau) \partial_{x}D_{x} \hat{\omega},  \partial_{x}D_{x} \hat{\omega} \rho\right\rangle\\ \lesssim & \|\hat{u}_{x}\|_{L^{\infty}} \|\partial_{x}D_{x} \hat{\omega}\rho^{1/2}\|_{L^{2}}\\
		\lesssim & \|\hat{\omega}\|_{\mathcal{W}}^{3}.
	\end{align*}
	Next we estimate $\left\langle N_{3} ,\partial_{x}D_{x} \hat{\omega}\rho\right\rangle$. Since
	\begin{align*}
		N_{3} = & - \partial_{x}(\sin(x) \hat{u} \hat{\omega}_{xx}) = -\partial_{x}(\hat{u} D_{x} \hat{\omega}_{x})\\
		= & -\partial_{x}(\hat{u} \partial_{x}D_{x} \hat{\omega} - \hat{u}\cos(x) \hat{\omega}_{x}).
	\end{align*}
	Applying integration by parts and Lemma \ref{le estimates}, we obtain
	\begin{align*}
		-\left\langle \hat{u} \partial_{x}\partial_{x}D_{x} \hat{\omega}, \partial_{x}D_{x} \hat{\omega}\rho\right\rangle & = \left\langle \hat{u}_{x}, (\partial_{x}D_{x}\hat{\omega})^{2}\rho\right\rangle - \big\langle\frac{\hat{u}\cos(\frac{x}{2})}{\sin(\frac{x}{2})},  (\partial_{x}D_{x} \hat{\omega})^{2} \rho \big\rangle\\
		& \leq \big(\|\hat{u}_{x}\|_{L^{\infty}} + \bigg\|\frac{\hat{u}}{\sin(\frac{x}{2})}\bigg\|_{L^{\infty}}\big) \|\partial_{x}D_{x} \hat{\omega}\rho^{1/2}\|_{L^{2}}^{2}\\
		& \lesssim \|\hat{\omega}\|_{\mathcal{W}}^{3}.
	\end{align*}
	It follows that
	\begin{align*}
		\left\langle N_{3} , \partial_{x}D_{x} \hat{\omega} \rho \right\rangle = & - \left\langle \hat{u} \partial_{x}\partial_{x}D_{x} \hat{\omega},  \partial_{x}D_{x} \hat{\omega} \rho \right\rangle - \left\langle \partial_{x} \hat{u} \partial_{x}D_{x} \hat{\omega}, \partial_{x}D_{x} \hat{\omega} \rho\right\rangle\\
		& + \left\langle\partial_{x}(\hat{u} \cos(x) \hat{\omega}_{x}), \partial_{x}D_{x} \hat{\omega} \rho\right\rangle\\
		\lesssim & \|\hat{\omega}\|_{\mathcal{W}}^{3}.
	\end{align*}
	Combining the estimates for $N_{i} (i=1,2,3,4)$, we obtain
	\begin{align}\label{nonlinear estimate}
		\left\langle \partial_{x} D_{x} N(\hat{\omega}), \partial_{x}D_{x} \hat{\omega} \rho \right\rangle \leq C\|\hat{\omega}\|_{\mathcal{W}}^{3}.
	\end{align}
	For the estimate of error term, since
	\begin{align*}
		F(\bar{\omega}) = & (a\bar{u}_{x} + \bar{c}_{\omega})\bar{\omega} - \bar{u}\bar{\omega}_{x}\\
		= & -a\cos(x) \sin(x) - (1-a)\sin(x) + \sin(x) \cos(x)\\
		= & -(1-a) \sin(x)(1 - \cos(x)) = -2(1-a)\sin(x) \sin^{2}(\frac{x}{2}),
	\end{align*}
	it follows that
	\begin{align*}
		\left\langle\partial_{x}D_{x}F(\bar{\omega}), \partial_{x}D_{x} \hat{\omega} \rho\right\rangle \lesssim |1-a| \|\hat{\omega}\|_{\mathcal{W}}.
	\end{align*}
	Define
	\begin{align*}
		E(\tau)^{2} = \|\partial_{x}D_{x} \hat{\omega}\rho^{1/2}\|_{L^{2}}^{2} + \mu \|\hat{\omega}\|_{\mathcal{H}}^{2}
	\end{align*}
	for some absolute constant $\mu > 1$. Then
	\begin{align*}
		\frac{1}{2}\frac{\mathrm{d}}{\mathrm{d} \tau} E(\tau)^{2} \leq -(\frac{1}{8} - C|1-a|) E(\tau)^{2} + C\left(\|\hat{\omega}\|_{\mathcal{H}}^{3} + \|\hat{\omega}\|_{\mathcal{W}}^{3}\right) + C|1-a| \left(\|\hat{\omega}\|_{\mathcal{H}} + \|\hat{\omega}\|_{\mathcal{W}}\right).
	\end{align*}
	Noting that
	\begin{align*}
		|2\sqrt{\pi} \rho^{1/2} \partial_{x}D_{x} \hat{\omega}|  = \bigg|\frac{1}{\sin(\frac{x}{2})}(\cos(x) \hat{\omega}_{x} + \sin(x) \hat{\omega}_{xx})\bigg|,
	\end{align*}
	we obtain the equivalence of norms
	\begin{align}\label{equiv norm}
		\|\hat{\omega}\|_{\mathcal{W}}^{2} \lesssim \|\hat{\omega}\|_{\mathcal{H}}^{2} + \|\partial_{x} D_{x}\hat{\omega} \rho^{1/2}\|_{L^{2}}^{2} \lesssim \|\hat{\omega}\|_{\mathcal{W}}^{2}.
	\end{align}
	Then,
	\begin{align*}
		\|\hat{\omega}\|_{\mathcal{W}}^{2} \lesssim E(\tau)^{2} \lesssim \|\hat{\omega}\|_{\mathcal{W}}^{2},
	\end{align*}
	which implies that
	\begin{align*}
		\frac{1}{2} \frac{\mathrm{d}}{\mathrm{d} \tau} E(\tau)^{2} \leq -(\frac{1}{8} - C|1-a|) E(\tau)^{2} + CE(\tau)^{3} + C|1-a|E(\tau).
	\end{align*}
	Hence, there exist a sufficiently small constant $\delta_{1}<\delta$ and a constant $C>0$ such that, if $|1-a| < \delta_{1}$ and $E(0) < \delta_{1}$, then $E(\tau) \leq C\delta_{1}$ for all $t\geq 0$. This can be established via a standard bootstrap argument. In view of \ref{equiv norm}, we obtain
	\begin{align}\label{omega W}
		\|\hat{\omega}\|_{\mathcal{W}} \leq C E(\tau)\leq C\delta_1.
	\end{align}

	We next show that $\|\hat{\omega}_{\tau}\|_{\mathcal{H}}$ decays exponentially as $\tau \to +\infty$. Direct estimate gives
	\begin{align*}
		\frac{1}{2} \frac{\mathrm{d}}{\mathrm{d} \tau} \left\langle \hat{\omega}_{\tau}, \hat{\omega}_{\tau} \right\rangle_{\mathcal{H}} = \left\langle (\mathcal{L}_{a} \hat{\omega})_{\tau}, \hat{\omega}_{\tau} \right\rangle_{\mathcal{H}} + \left\langle (N(\hat{\omega}))_\tau, \hat{\omega}_{\tau} \right\rangle_{\mathcal{H}}.
	\end{align*}
	We obtain a damping effect arising from the linear terms, while the small error terms associated with $\bar{\omega}$ and $\bar{u}$ vanish.As a result, we derive
	\begin{align*}
		\frac{1}{2} \frac{\mathrm{d}}{\mathrm{d} \tau}  \|\hat{\omega}_{\tau}\|_{\mathcal{H}}^{2} \leq - (\frac{1}{2} - |1-a|) \|\hat{\omega}_{\tau}\|_{\mathcal{H}}^{2} + \left\langle (N(\hat{\omega}))_{\tau}, \hat{\omega}_{\tau}\right\rangle_{\mathcal{H}}.
	\end{align*}
	Using similar estimates as in \textbf{Step 2}, we obtain
	\begin{align*}
		|\left\langle (N(\hat{\omega}))_{\tau}, \hat{\omega}_{\tau} \right\rangle_{\mathcal{H}}| \lesssim E(\tau) \|\hat{\omega}_{\tau}\|_{\mathcal{H}}^{2} + |\left\langle \hat{u}_{\tau} \hat{\omega}_{xx}, \hat{\omega}_{\tau,x} \rho\right\rangle| +|\left\langle \hat{u} \hat{\omega}_{\tau,xx}, \hat{\omega}_{\tau,x} \rho \right\rangle|.
	\end{align*}
	For the term $\left\langle \hat{u}_{\tau} \hat{\omega}_{xx}, \hat{\omega}_{\tau,x} \rho \right\rangle $, we have
	\begin{align*}
		\left\langle \hat{u}_{\tau} \hat{\omega}_{xx}, \hat{\omega}_{\tau,x} \rho \right\rangle = & \left\langle \hat{u}_{\tau} \rho^{1/2} \hat{\omega}_{xx}, \hat{\omega}_{\tau,x} \rho^{1/2} \right\rangle\\
		= & \frac{1}{4\pi} \big\langle \frac{\hat{u}_{\tau}}{\sin(\frac{x}{2})} \hat{\omega}_{xx}, \frac{\hat{\omega}_{\tau,x}}{\sin(\frac{x}{2})}\big\rangle\\
		= & \frac{1}{4\pi} \big\langle \frac{2 \hat{u}_{\tau}}{\sin(x)} \hat{\omega}_{xx} \cos(\frac{x}{2}), \frac{\hat{\omega}_{\tau,x}}{\sin(\frac{x}{2})} \big\rangle\\
		\lesssim & \bigg\|\frac{\hat{u}_{\tau}}{\sin(x)} \bigg\|_{L^{\infty}} E(\tau) \|\hat{\omega}_{\tau}\|_{\mathcal{H}}\\
		\lesssim & E(\tau) \|\hat{\omega}_{\tau}\|_{\mathcal{H}}^{2}.
	\end{align*}
	To estimate the term $\bigg\|\frac{\hat{u}_{t}}{\sin(x)}\bigg\|_{L^{\infty}}$, we exploit the oddness and periodicity of $\hat{u}_{\tau}$ in the spatial variable \(x\).  In particular, since $\hat{u} (\pi,\tau) = \hat{u} (0,\tau) = 0$,  it suffices to estimate the norm on $[0,\pi]$. Observing that $\sin(x) \leq \frac{2}{\pi} \min\{x,\pi-x\}$ for $x \in [0,\pi]$, we apply the Lagrange mean value theorem to deduce we obtain $\big\|\frac{\hat{u}_{\tau}}{\sin(x)}\big\|_{L^{\infty}} \lesssim \|\hat{u}_{\tau,x}\|_{L^{\infty}}$. For the term $\left\langle \hat{u} \hat{\omega}_{\tau,xx}, \hat{\omega}_{\tau,x} \rho \right\rangle$, using integration by parts and Lemma \ref{le estimates}, we obtain
	\begin{align*}
		\left\langle \hat{u} \hat{\omega}_{\tau,xx}, \hat{\omega}_{\tau,x} \rho \right\rangle & = -\frac{1}{2} \left\langle \hat{u}_{x}\rho, \hat{\omega}_{\tau,x}^{2}\right\rangle + \frac{1}{2} \big\langle\frac{\hat{u} \cos(\frac{x}{2})}{\sin(\frac{x}{2})} \rho, \hat{\omega}_{\tau,x}^{2} \big\rangle \\
		\lesssim & \|\hat{u}_{x}\|_{L^{\infty}} \|\hat{\omega}_{\tau}\|_{\mathcal{H}}^{2} + \bigg\|\frac{\hat{u}}{\sin(\frac{x}{2})}\bigg\|_{L^{\infty}} \|\hat{\omega}_{\tau}\|_{\mathcal{H}}^{2}\\
		\lesssim & E(\tau)\|\hat{\omega}_{\tau}\|_{\mathcal{H}}^{2}.
	\end{align*}
	It follows that
	\begin{align*}
		\frac{1}{2} \frac{\mathrm{d}}{\mathrm{d} \tau} \|\hat{\omega}_{\tau}(\tau)\|_{\mathcal{H}}^{2} & \leq - (\frac{1}{2} - |1-a|) \|\hat{\omega}_{\tau} (\tau)\|_{\mathcal{H}}^{2} + C E(\tau) \|\hat{\omega}_{\tau} (\tau)\|_{\mathcal{H}}^{2}\\
		& \leq -(\frac{1}{2} - C|1-a|) \|\hat{\omega}_{\tau} (\tau)\|_{\mathcal{H}}^{2}.
	\end{align*}
	
	Finally, combining the prior estimates on $E(\tau)$, we deduce that  $\|\hat{\omega}_{\tau}\|_{\mathcal{H}}$ decays exponentially as \(\tau \to +\infty\). Consequently,  $\hat{\omega} (\tau) + \bar{\omega}$ converges strongly in \(\mathcal{H}\) to a limiting profile $\omega_{a} \in \mathcal{H}$. Moreover, since $\|\hat{\omega}\|_{\mathcal{W}}\le E(\tau)\le C\delta_0$ , we  obtain that $\hat{\omega} (\tau) + \bar{\omega}$ converges weakly in \(\mathcal{W}\) to \(\omega_{a}\) as well. Based on the convergence of $\hat{\omega} (\tau) + \bar{\omega}$, we can show that $\hat{c}_{\omega} + \bar{c}_{\omega}$ converges exponentially to some scalar $c_{\omega,a}$ as $\tau \rightarrow +\infty$.  In view of the odd symmetry of \(\hat{\omega}\), which is preserved in time, it follows that the full profile \(\hat{\omega}(\tau) + \bar{\omega}\) remains odd for all \(\tau > 0\), and hence the limit \(\omega_{a}\) inherits the oddness.
	
   Furthermore, by Poincar\'{e}'s inequality and the uniform bound on \(\|\hat{\omega}\|_{\mathcal{H}}\)
	 , we deduce that
	\begin{align*}
		\|\hat{\omega}\|_{L^{\infty}} \lesssim \|\hat{\omega}_{x}\|_{L^{2}} \lesssim \|\hat{\omega}\|_{\mathcal{H}},
	\end{align*}
	which yields uniform convergence in \(L^{\infty}\), i.e.,
	\begin{align*}
		\hat{\omega} (\tau) + \bar{\omega} \to \omega_{a}, \quad \text{uniformly in \(x\).}
	\end{align*}
	 We thus conclude that $(\omega_{a},c_{\omega,a})$ solves the steady-state equation associated with \ref{dre}, and that $\omega_{a}$ corresponds to a self-similar profile of \ref{PJE}.
	 
The proof of Theorem \ref{them profile} is finished.
	
	\end{proof}

	\section{Blow-up for the Inviscid Proudman-Johnson model with H\"{o}lder continuous initial data}\label{section holder}
	In this section, we establish finite time blow-up for $C^{\alpha}$ initial data to the inviscid Proudman-Johnson model (corresponding to $\nu=0$ and $a=1$). Following the approach in Section \ref{section COSSP}, we consider
	\begin{align}\label{dre1}
		\tilde{\omega}_{\tau}(x,\tau) + \tilde{u}\tilde{\omega}_{x}(x,\tau) = c_{\omega}(\tau)\tilde{\omega} + \tilde{u}_{x}\tilde{\omega}, \quad \tilde{u}_{xx} = \tilde{\omega}, \quad  \tau >0, \ x\in\mathbb{T},
	\end{align}
	where $\tilde{\omega}(x,\tau), \tilde{u}(x,\tau), C_{\omega}(\tau)$ and $t(\tau)$ are same defined in Section \ref{section COSSP}.
	
	\begin{proof}[Proof of Theorem \ref{the holder}]
	
	\textbf{Step 1. Perturbation Equations.}\label{sec holder nonlinear}
	
	In this step, we  construct an approximate steady state $(\bar{\omega}_{\alpha},\bar{u}_{\alpha})$ of \ref{dre1} with $\bar{\omega}_{\alpha}\in C^{\alpha}$ and $\bar{\omega}_{\alpha}\approx -Cx^{\alpha}$ near $x=0$, where $\tilde{\omega}, \tilde{u}$ are same defined in \ref{tilde ompsi} of Section \ref{section COSSP}. A natural choice of $\bar{\omega}_{\alpha}$ is
	\begin{align}\label{appro holder}
		\bar{\omega}_{\alpha} = - \text{sgn} (x)|\sin(x)|^{\alpha}, \quad \bar{c}_{\omega,\alpha} = (\alpha-1)\bar{u}_{\alpha,x}(0),
	\end{align}
	where $\bar{u}_{\alpha}$ is related to $\bar{\omega}_{\alpha}$ via Lemma \ref{le psi expression}. We consider odd perturbations $\hat{\omega},\hat{u}$, and note that the odd symmetry is preserved by the dynamics of \eqref{dre1}. Let
	\begin{align*}
		\tilde{\omega} (x,\tau) = \bar{\omega}_{\alpha} + \hat{\omega} (x,\tau),\, \tilde{u} (x,\tau) = \bar{u}_{\alpha} + \hat{u} (x,\tau),\, c_{\omega} (\tau) = \bar{c}_{\omega,\alpha} + \hat{c}_{\omega,\alpha} (\tau), \quad \tau>0, \ x\in\mathbb{T},
	\end{align*}
	with the normalization condition imposed as
	\begin{align}\label{normalization holder}
		\hat{c}_{\omega,\alpha} (\tau) = (\alpha-1)u_{x}(0,\tau).
	\end{align}
	Under the conditions \ref{appro holder} and \ref{normalization holder}, it follows that $\frac{\bar{\omega}}{x^{\alpha}}$ is fixed, i.e.,
	\begin{align*}
		\lim\limits_{x\rightarrow 0} \frac{\hat{\omega} (x,\tau) + \bar{\omega}_{\alpha}(x)}{x^{\alpha}} = \lim\limits_{x\rightarrow 0} \frac{\hat{\omega} (x,0) + \bar{\omega}_{\alpha}(x)}{x^{\alpha}}.
	\end{align*}
	Consequently, if the initial perturbation $\omega_{0}(x)$ vanishes near $x=0$ with higher order than $x^{\alpha}$, then the perturbation $\omega(x,\tau)$ will retain this decay rate near $x=0$, which ensures that we can perform energy estimate on $\omega$ with the singular weight $\rho$ near $x=0$. The perturbations satisfy the following model
	\begin{align}\label{per nuequ}
		\begin{cases}
			\hat{\omega}_{\tau} = \mathcal{L}_{1} \hat{\omega} + \mathcal{R}_{\alpha} + N_{\alpha} + F_{\alpha}, & \tau >0, \ x \in \mathbb{T}, \\
			\hat{u}_{xx} = \hat{\omega}, & \tau >0, \ x \in \mathbb{T},
		\end{cases}
	\end{align}
	where the  leading order linear part is same as in \ref{definition L}. Define $\omega_{res} = \bar{\omega}_{\alpha} - \bar{\omega}, u_{res} = \bar{u}_{\alpha} - \bar{u}$, then the nonlinear term $N_{\alpha}$, residual term $\mathcal{R}_{\alpha}$ and error term $F_{\alpha}$ are defined as follows
	\begin{align*}
		& \mathcal{R}_{\alpha} = - u_{res} \hat{\omega}_{x} + u_{res,x} \hat{\omega} + \omega_{res} \hat{u}_{x} - \omega_{res,x} \hat{u} + \bar{c}_{\omega,\alpha} \hat{\omega} + \hat{c}_{\omega,\alpha} \bar{\omega}_{\alpha},\\
		& N_{\alpha} = \hat{\omega}  \hat{u}_{x} - \hat{u} \hat{\omega}_{x} + \hat{c}_{\omega,\alpha}, \quad  F_{\alpha}=\bar{\omega}_{\alpha}\bar{u}_{\alpha,x}-\bar{u}_{\alpha}\bar{\omega}_{\alpha,x}+\bar{c}_{\omega,\alpha}\bar{\omega}_{\alpha}.
	\end{align*}
	
	\textbf{Step 2. Nonlinear stability analysis.}
	
	Analogous to \textbf{Step 2} in Section \ref{subsection nonlinear}, we obtain
	\begin{align}\label{eq:estiamte-holder1}
		\frac{1}{2} \frac{\mathrm{d}}{\mathrm{d}\tau}\left\langle \hat{\omega}_{x}, \hat{\omega}_{x} \rho \right\rangle \leq - \frac{1}{2} \left\langle \hat{\omega}_{x}, \hat{\omega}_{x} \rho \right\rangle + C \left\langle \hat{\omega}_{x}, \hat{\omega}_{x} \rho \right\rangle^{3/2} + \left\langle(\mathcal{R}_{\alpha})_{x}, \hat{\omega}_{x} \rho \right\rangle + \left\langle(F_{\alpha})_{x}, \hat{\omega}_{x} \rho \right\rangle.
	\end{align}
	Applying Lemma \ref{le res estimates}, we have
	\begin{align*}
		\left\langle(\mathcal{R}_{\alpha})_{x}, \hat{\omega}_{x} \rho \right\rangle = &\left\langle - u_{rex} \hat{\omega}_{xx} + u_{res,xx} \hat{\omega} + \omega_{res} \hat{u}_{xx} - \omega_{res,xx} \hat{u} +\bar{c}_{\omega,\alpha} \hat{\omega}_{x} + \bar{c}_{\omega,\alpha} \bar{\omega}_{\alpha,x}, \hat{\omega}_{x} \rho \right\rangle\\
		\lesssim & \frac{1}{2} \left\langle u_{res,x} \rho, (\hat{\omega}_{x})^{2}\right\rangle - \frac{1}{2} \big\langle \frac{u_{rex} \cos(\frac{x}{2})}{\sin (\frac{x}{2})} \rho, (\hat{\omega}_{x})^{2} \big\rangle  + \left\langle - \omega_{res,xx} \hat{u} + \hat{c}_{\omega,\alpha} \bar{\omega}_{\alpha,x}, \hat{\omega}_{x} \rho \right\rangle\\
		& + \left(\|\omega_{res}\|_{L^{\infty}} + |\alpha-1|\right) \left\langle \hat{\omega}_{x}, \hat{\omega}_{x} \rho \right\rangle\\
		\lesssim & \left\langle -\omega_{res,xx}\hat{u} + \hat{c}_{\omega,\alpha} \bar{\omega}_{\alpha,x}, \hat{\omega}_{x} \rho \right\rangle + |\alpha-1| \left\langle \hat{\omega}_{x}, \hat{\omega}_{x} \rho \right\rangle.
	\end{align*}
	Direct computations show that
	\begin{align*}
		\hat{c}_{\omega,\alpha} \bar{\omega}_{\alpha,x} & = (\alpha - 1) \hat{u}_{x} (0) \bar{\omega}_{\alpha,x} - \omega_{res,xx} \hat{u}\\
		& = \hat{u}_{x} (0) \big((\alpha - 1) \bar{\omega}_{\alpha,x} - \sin(x) \omega_{res,xx}\big) + \omega_{res,xx} \big(\sin(x) \hat{u}_{x} (0) - \hat{u}\big).
	\end{align*}
	Applying the estimate \ref{cancellation} in Lemma \ref{le res estimates}, we obtain
	\begin{align*}
		\|\left(\hat{u}_{x}(0,\tau)(\alpha-1)\bar{\omega}_{\alpha,x}\right)\|_{L^{2}} \lesssim |\alpha-1| \left\langle \hat{\omega}_{x}, \hat{\omega}_{x}\rho\right\rangle^{1/2}.
	\end{align*}
	Using the estimate \ref{omegares estimate} in Lemma \ref{le res estimates}, we have
	\begin{align*}
		\|\left(\omega_{res,xx}\left(\hat{u}_{x}(0,\tau) \sin(x) - \hat{u}\right)\right) \rho^{1/2} \|_{L^{2}} & \lesssim |\alpha-1| \bigg\||\sin(x)|^{\kappa-1} \frac{\hat{u}_{x}(0,\tau)\sin(x) - \hat{u}}{|\sin(x)|} \rho^{1/2} \bigg\|_{L^{2}}\\
		& \lesssim |\alpha-1| \bigg\|\frac{\hat{u}_{x}(0,\tau)\sin(x) - \hat{u}}{|\sin(x)|}\rho^{1/2}\bigg\|_{L^{\infty}}\\
		&\lesssim |\alpha-1| \left\langle \hat{\omega}_{x}, \hat{\omega}_{x}\rho\right\rangle^{1/2}.
	\end{align*}
	Next we prove $\bigg\|\frac{\hat{u}_{x}(0,\tau)\sin(x) - \hat{u}}{|\sin(x)|}\rho^{1/2}\bigg\|_{L^{\infty}} \lesssim \left\langle\hat{\omega}_{x},\hat{\omega}_{x} \rho\right\rangle^{1/2}$. When $|x|\leq \frac{\pi}{2}$, applying Lemma \ref{le psi expression}, we have
	\begin{align*}
		\big|\frac{\hat{u}_{x}(0,\tau)\sin(x) - \hat{u}}{|\sin(x)|} \rho^{1/2} \big|&\lesssim \big|\frac{\hat{u}_{x}(0,\tau) \sin(x) - \hat{u}}{x^{2}} \big| = \big|\frac{\hat{u}_{x}(0,\tau)(\sin(x) -x) + \hat{u}_{x}(0,\tau) x - \hat{u}}{x^{2}}\big|\\
		& \leq \big|\frac{\hat{u}_{x}(0,\tau)(\sin(x) - x)}{x^{2}}\big| + \big|\frac{\int_{0}^{x}(x-y) \hat{\omega} (y) \mathrm{d}y}{x^{2}} \big| \lesssim \left\langle \hat{\omega}_{x},\hat{\omega}_{x} \rho \right\rangle^{1/2}.
	\end{align*}
	When $|x|\geq \frac{\pi}{2}$, we have
	\begin{align*}
		\big|\frac{\hat{u}_{x}(0,\tau)\sin(x) - \hat{u}}{|\sin(x)|} \rho^{1/2} \big| \leq \big| \frac{\hat{u}_{x}(0,\tau)\sin(x) - \hat{u}}{|\sin(x)|}\big| \leq |\hat{u}_{x}(0,\tau)| + \|\frac{\hat{u}}{\sin(x)}\|_{L^{\infty}} \lesssim  \left\langle \hat{\omega}_{x},\hat{\omega}_{x} \rho \right\rangle^{1/2}.
	\end{align*}
	Therefore, we obtain
	\begin{align}\label{eq:estimate-Ra}
		\left\langle(\mathcal{R}_{\alpha})_{x}, \hat{\omega}_{x} \rho \right\rangle \lesssim |\alpha-1| \left\langle \hat{\omega}_{x}, \hat{\omega}_{x} \rho \right\rangle.
	\end{align}
	
	For the error term, using the odd symmetry of the solution, we focus on the pointwise estimates for $x\geq 0$. Direct computations reveal that
	\begin{align*}
		(F_{\alpha})_{x}&=(\bar{\omega}_{\alpha})^{2}+\bar{c}_{\omega,\alpha}\bar{\omega}_{\alpha,x}-\bar{u}_{\alpha}\bar{\omega}_{\alpha,xx}\\
		& = (\sin(x))^{2\alpha} - \alpha(\alpha-1)\bar{u}_{\alpha,x}(0)(\sin(x))^{\alpha-1}\cos(x)\\ & \quad -\bar{u}_{\alpha} \left(-\alpha(\alpha-1) (\sin(x))^{\alpha-2} \cos^{2}(x) + \alpha(\sin(x))^{\alpha}\right)\\
		& = \alpha(\alpha-1)(\sin(x))^{\alpha-1} \cos(x) \left(-\bar{u}_{\alpha,x}(0) + \bar{u}_{\alpha}\frac{\cos(x)}{\sin(x)}\right) + (\sin(x))^{\alpha} \left(-\alpha\bar{u}_{\alpha} + (\sin(x))^{\alpha}\right).
	\end{align*}
	 Next we will show that $\|(F_{\alpha})_{x}\rho^{1/2}\|_{L	^{\infty}}\lesssim|\alpha-1|$. Since
	 \begin{align*}
	 	\bar{u}_{\alpha,x}(0)-\bar{u}_{\alpha} \frac{\cos(x)}{\sin(x)} & = u_{res,x}(0) + \bar{u}_{x}(0) -u_{res} \frac{\cos(x)}{\sin(x)} - \bar{u}\frac{\cos(x)}{\sin(x)}\\
	 	& = (1-\cos(x)) + u_{res,x}(0) - u_{res} \frac{\cos(x)}{\sin(x)}\\
	 	& = (1-\cos(x)) (1+\frac{u_{res}}{\sin(x)}) - \frac{u_{res}}{\sin(x)}+u_{res,x}(0).
	 \end{align*}
	  It follows that
	 \begin{align*}
	 	\bigg\|\left(\bar{u}_{\alpha,x}(0) - \bar{u}_{\alpha}\frac{\cos(x)}{\sin(x)}\right)\rho^{1/2}\bigg\|_{L^{\infty}} \lesssim \bigg\|\frac{1-\cos(x)}{x}(1+\frac{u_{res}}{\sin(x)})\bigg\|_{L^{\infty}} \lesssim 1
	 \end{align*}
	 and
	 \begin{align*}
	 	\bigg\|\left(-\frac{u_{res}}{\sin(x)} + u_{res,x}(0)\right) \rho^{1/2}\bigg\|_{L^{\infty}} = \bigg\|\frac{u_{res,x}(0)\sin(x) - u_{res}}{\sin(x)} \rho^{1/2}\bigg\|_{L^{\infty}} \lesssim 1.
	 \end{align*}
	 On the other hand, using Lemma \ref{le res estimates}, we obtain
	 \begin{align*}
	 	\|\alpha\bar{u}_{\alpha}-(\sin(x))^{\alpha}\|_{L^{\infty}} & = \|(\alpha-1)\bar{u}_{\alpha} + \bar{u}_{\alpha} - \sin(x) + \sin(x) - (\sin(x))^{\alpha}\|_{L^{\infty}}\\
	 	& \leq |\alpha-1| \|\bar{u}_{\alpha}\|_{L^{\infty}} + \|u_{res}\|_{L^{\infty}} + \|\sin(x)-(\sin(x))^{\alpha}\|_{L^{\infty}}\\
	 	& \lesssim |\alpha-1|.
	 \end{align*}
	 Consequently, we get
	 \begin{align}\label{eq:estimate-Fa}
	 	\|(F_{\alpha})_{x}\rho^{1/2}\|_{L^{\infty}}\lesssim & |\alpha-1| \|(\sin(x))^{\alpha-1}\|_{L^{2}} \bigg\|\left(\bar{u}_{\alpha,x}(0)-\bar{u}_{\alpha}\frac{\cos(x)}{\sin(x)}\right) \rho^{1/2} \bigg\|_{L^{\infty}}\nonumber\\
	 	& + \|(\sin(x))^{\alpha} \rho^{1/2}\|_{L^{2}} \|\alpha\bar{u}_{\alpha} + (\sin(x))^{\alpha}\|_{L^{\infty}}\\
	 	\lesssim & |\alpha-1|.\nonumber
	 \end{align}
	
	 \textbf{Step 3. Finite time blow-up analysis.}
	
	 Collecting \ref{eq:estiamte-holder1}, \ref{eq:estimate-Ra} and \ref{eq:estimate-Fa}, we obtain
	 \begin{align}\label{eq:estimate-holder wx}
	 \frac{1}{2} \frac{\mathrm{d}}{\mathrm{d} \tau} \left\langle \hat{\omega}_{x}, \hat{\omega}_{x} \rho \right\rangle \leq - (\frac{1}{2} - C|\alpha-1|) \left\langle \hat{\omega}_{x},\hat{\omega}_{x}\rho\right\rangle + C\left\langle \hat{\omega}_{x}, \hat{\omega}_{x} \rho\right\rangle^{3/2} + |\alpha-1| \left\langle\hat{\omega}_{x},\hat{\omega}_{x}\rho\right\rangle^{1/2}.
	 \end{align}
	 Analogous to \textbf{Step 3} in Section \ref{subsection blowup}, we perform the bootstrap argument to conclude the solution to \ref{PJE} blows-up in finite time for $\nu=0$ with H\"{o}lder initial data, as stated in Theorem \ref{the holder}.
	
	\textbf{Step 4. Convergence to the 	self-similar profile.}
	
	Following \textbf{Step 4} in Section \ref{step3:convergence}, we first establish a weighted higher-order estimate for $\hat{\omega}$ before get the exponentially decay of $\|\hat{\omega}_{\tau}\|_{\mathcal{H}}$ as $\tau \to +\infty$.
	
	Applying the estimates \ref{eq:estimate-L1} and \ref{eq:estimate-Nw} yield
	\begin{align}\label{eq:estimate-holder Dxwx}
		\begin{aligned}
			\frac{1}{2} \frac{\mathrm{d}}{\mathrm{d} \tau} \|\partial_{x}D_{x} \hat{\omega}\rho^{1/2}\|_{L^{2}}^{2} \leq & -\frac{1}{4} \|\partial_{x}D_{x} \hat{\omega} \rho\|_{L^{2}}^{2} + \|\hat{\omega}\|_{\mathcal{H}}^{2}  + C\|\hat{\omega}\|_{\mathcal{H}}^{2} + C\|\partial_{x}D_{x} \hat{\omega}\rho^{1/2}\|_{L^{2}}^{2} \|\hat{\omega}\|_{\mathcal{H}}\\
			&  + \left\langle\partial_{x}D_{x}\mathcal{R}_{\alpha},\partial_{x}D_{x} \hat{\omega}\rho\right\rangle + \left\langle\partial_{x}D_{x}F_{\alpha}, \partial_{x}D_{x}\hat{\omega}\rho\right\rangle.
		\end{aligned}
	\end{align}
	Direct computations show that
	\begin{align}\label{eq:DxRa}
		\begin{aligned}
			\partial_{x}D_{x}\mathcal{R}_{\alpha} = & - \partial_{x}(\sin(x) u_{res} \hat{\omega}_{xx}) + \cos(x) \omega_{res} \hat{\omega} + \sin(x) \omega_{res,x} \hat{\omega} + \sin(x) \omega_{res} \hat{\omega}_{x}\\
			& + \cos(x) \omega_{res} \hat{\omega} +\sin(x) \omega_{res,x} \hat{\omega} +\sin(x) \omega_{res} \hat{\omega}_{x}\\
			& - \sin(x) \omega_{res,xxx} \hat{u} - \sin(x) \omega_{res,xx} \hat{u}_{x} + \hat{c}_{\omega,\alpha} \sin(x) \bar{\omega}_{\alpha,xx}\\
			& - \cos(x)\omega_{res,xx} \hat{u} + \hat{c}_{\omega,\alpha} \cos(x) \bar{\omega}_{\alpha,x} + \bar{c}_{\omega,\alpha} \partial_{x}D_{x} \hat{\omega}.
		\end{aligned}
	\end{align}
	Since
	\begin{align*}
		\sin(x) u_{res} \hat{\omega}_{xx} = u_{res}D_{x} \hat{\omega}_{x} = u_{res}\partial_{x}D_{x} \hat{\omega} - \cos(x) u_{res} \hat{\omega}_{x},
	\end{align*}
	applying Lemma \ref{le res estimates}, we deduce
	\begin{align*}
		&\left\langle-\partial_{x}D_{x} (\sin(x) u_{res} \omega_{xx}),\partial_{x}D_{x} \hat{\omega} \rho\right\rangle\\
		= & \left\langle - u_{res} \partial_{x}\partial_{x}D_{x}\hat{\omega} - u_{res,x} \partial_{x}D_{x} \hat{\omega} - u_{res} \sin(x) \hat{\omega}_{x} + u_{res,x}\cos(x) \hat{\omega}_{x} - u_{res} \cos(x) \hat{\omega}_{xx}, \partial_{x}D_{x} \hat{\omega} \rho\right\rangle\\
		\lesssim & \|u_{res,x}\|_{L^{\infty}} \|\hat{\omega}\|_{\mathcal{W}}^{2} \lesssim |\alpha-1| \|\hat{\omega}\|_{\mathcal{W}}^{2}.
	\end{align*}
	Note that
	\begin{align}\label{eq:DxRa1}
		\begin{aligned}
			&-\sin(x) \omega_{res,xxx} \hat{u} + \hat{c}_{\omega,\alpha} \sin(x) \bar{\omega}_{\alpha,xx} -\sin(x) \omega_{res,xx} \hat{u}_{x}\\
			= & \sin(x)  \hat{u}_{x}(0,\tau) \partial_{x}\left((\alpha-1)\bar{\omega}_{\alpha,x} - \sin(x)\omega_{res,xx}\right)- \sin(x)(\omega_{res,xxx} \hat{u} + \omega_{res,xx} \hat{u}_{x}) \\
			& + \sin(x) (\hat{u}_{x}(0,\tau)\cos(x) \omega_{res,xx} + \hat{u}_{x}(0,\tau) \sin(x) \omega_{res,xxx})\\
			= & \sin(x)  \hat{u}_{x}(0,\tau) \partial_{x} \left((\alpha-1)\bar{\omega}_{\alpha,x} - \sin(x) \omega_{res,xx}\right) + \sin(x) \partial_{x} \left(\hat{u}_{x}(0,\tau) \sin(x) \omega_{res,xx}\right)\\
			& - \sin(x)\partial_{x}(\omega_{res,xx} \hat{u})\\
			= & \sin(x) \hat{u}_{x}(0,\tau) \partial_{x} \left((\alpha-1)\bar{\omega}_{\alpha,x} - \sin(x) \omega_{res,xx}\right) + \sin(x) \partial_{x} \left((\hat{u}_{x}(0,\tau)\sin(x) - \hat{u})\omega_{res,xx}\right)
		\end{aligned}
	\end{align}
	and
	\begin{align}\label{eq:DxRa2}
		& \quad -\cos(x) \omega_{res,xx} \hat{u} + \hat{c}_{\omega,\alpha} \cos(x) \bar{\omega}_{\alpha,x}\nonumber\\
		& = \cos(x) \left(-\omega_{res,xx} \hat{u} + (\alpha-1) \hat{u}_{x}(0,\tau)\bar{\omega}_{\alpha,x}\right)\\
		& = \cos(x) \left(\hat{u}_{x}(0,\tau) \left((\alpha-1) \bar{\omega}_{\alpha,x} -\sin(x)\omega_{res,xx}\right) + \omega_{res,xx}\left(\hat{u}_{x}(0,\tau)\sin(x) - \hat{u} \right)\right).\nonumber
	\end{align}
	Applying Lemma \ref{le res estimates} and Lemma \ref{le psi expression}, it follows that
	\begin{align}\label{eq:DxRa3}
		\begin{aligned}
			&\|\left(-\sin(x) \omega_{res,xxx}\hat{u} - \sin(x) \omega_{res,xx} \hat{u}_{x} + \hat{c}_{\omega,\alpha} \sin(x) \bar{\omega}_{\alpha,xx}\right) \rho^{1/2} \|_{L^{2}}\\
			\lesssim & |\alpha-1| \|\hat{\omega}\|_{\mathcal{H}} + \|\left(\sin(x) \partial_{x} \left((\hat{u}_{x}(0,\tau)\sin(x) - \hat{u}) \omega_{res,xx}\right)\right)\rho^{1/2}\|_{L^{2}}\\
			\lesssim & |\alpha-1| \|\hat{\omega}\|_{\mathcal{H}} + |\alpha-1|\bigg\||\sin(x)|^{\kappa-1} \frac{\hat{u}_{x}(0,\tau)\sin(x) - \hat{u}}{|\sin(x)|} \rho^{1/2} \bigg\|_{L^{2}}\\
			& + |\alpha-1| \||\sin(x)|^{\kappa-1} \left(\hat{u}_{x}(0,\tau) - \hat{u}_{x}\right) \rho^{1/2}\|_{L^{2}}\\
			\lesssim & |\alpha-1| \|\hat{\omega}\|_{\mathcal{H}} + |\alpha-1| \bigg\|\frac{\hat{u}_{x}(0,\tau) \sin(x) - \hat{u}}{|\sin(x)|} \rho^{1/2} \bigg\|_{L^{\infty}} + |\alpha-1| \bigg\|\frac{\hat{u}_{x}(0,\tau) - \hat{u}_{x}}{x}\bigg\|_{L^{\infty}}\\
			\lesssim & |\alpha-1|\|\hat{\omega}\|_{\mathcal{H}}.
		\end{aligned}
	\end{align}
	Combining with \ref{eq:DxRa}-\ref{eq:DxRa3} yields
	\begin{align}\label{eq:estimate-DxRa}
		\left\langle\partial_{x}D_{x}\mathcal{R}_{\alpha},\partial_{x}D_{x} \hat{\omega} \rho \right\rangle \lesssim |\alpha-1| (\|\hat{\omega}\|_{\mathcal{H}} + \|\hat{\omega}\|_{\mathcal{W}}) \|\hat{\omega}\|_{\mathcal{W}} \lesssim  |\alpha-1| \|\hat{\omega}\|_{\mathcal{W}}^{2}.
	\end{align}
	Moreover, for $x\geq 0$, we have
	\begin{align*}
		\bar{\omega}_{\alpha,xxx}  = & -\alpha(\alpha-1)(\alpha-2) (\sin(x))^{\alpha-3}\cos^{3}(x) + 2\alpha(\alpha-1)(\sin(x))^{\alpha-1} \cos(x) \\
		& + \alpha^{2}(\sin(x))^{\alpha-1}.
	\end{align*}
	Similar to the estimate \ref{eq:estimate-Fa}, we get
	\begin{align*}
		\|\partial_{x}D_{x}F_{\alpha}\rho^{1/2}\|_{L^{2}}\leq & \|\left((\bar{\omega}_{\alpha})^{2} + \bar{c}_{\omega,\alpha}\bar{\omega}_{\alpha,x} - \bar{u}_{\alpha}\bar{\omega}_{\alpha,xx}\right)\rho^{1/2}\|_{L^{2}}\\
		& + \|\sin(x) \left(2\bar{\omega}_{\alpha}\bar{\omega}_{\alpha,x} + \bar{c}_{\omega,\alpha}\bar{\omega}_{\alpha,xx}-\bar{u}_{\alpha,x}\bar{\omega}_{\alpha,xx}-\bar{u}_{\alpha}\bar{\omega}_{\alpha,xxx}\right) \rho^{1/2} \|_{L^{2}}\\
		\lesssim & |\alpha-1|,
	\end{align*}
	which implies
	\begin{align}\label{eq:estimate-DxFa}
		\left\langle\partial_{x}D_{x}F_{\alpha},\partial_{x}D_{x} \hat{\omega} \rho \right\rangle \lesssim |\alpha-1| \left\langle\partial_{x}D_{x} \hat{\omega}, \partial_{x}D_{x} \hat{\omega} \rho \right\rangle^{1/2} \lesssim  |\alpha-1| \|\hat{\omega}\|_{\mathcal{W}}.
	\end{align}
	Combining \ref{eq:estimate-holder wx}, \ref{eq:estimate-holder Dxwx}, \ref{eq:estimate-DxRa} and \ref{eq:estimate-DxFa}, we deduce
	\begin{align*}
		\frac{\mathrm{d}}{\mathrm{d} \tau} E(\tau) \leq - (\frac{1}{8} - C|\alpha-1|) E(\tau) + CE^{2}(\tau) + C|\alpha-1|,
	\end{align*}
	where $E(\tau)^{2} = \|\partial_{x}D_{x} \hat{\omega}\rho^{1/2}\|_{L^{2}}^{2} + \mu \|\hat{\omega}\|_{\mathcal{H}}^{2}$. Using the bootstrap argument on the energy $E(\tau)$, we obtain there exist a sufficiently small constant $\delta_{2}> 0$ and a constant $C>0$ such that, if $|1-a| < \delta_{2}$ and $E(0) < C\delta_{2}$, then $E(\tau) \leq C\delta_{2}$ for all $\tau \geq 0$. Following the same strategy as \textbf{Step 4} in Section \ref{step3:convergence}, we can get the analogous estimate of $\left\langle\omega_{\tau},\omega_{\tau}\right\rangle_{\mathcal{H}}$ and establish exponential convergence of $\left\langle\omega_{\tau},\omega_{\tau}\right\rangle_{\mathcal{H}}$ to zero. The proof of Theorem \ref{the holder} is then completed.
     \end{proof}
	
	\section{Blow-up for the Viscous Model}\label{section viscous}
	In this section, we establish the finite time blow-up for the viscous model, which can be written as
	\begin{align}\label{nuPJE}
		\begin{cases}
			\omega_{t} + u\omega_{x} = a\omega u_{x} + \nu\omega_{xx}, & t >0, \ x \in \mathbb{T}, \\
			u_{xx} =\omega, & t >0, \ x \in \mathbb{T},
		\end{cases}
	\end{align}
	 with $a > 1$ and fixed $\nu>0$.
	
	We introduce the rescaled variables
	\begin{align*}
		\tilde{\omega}(x,\tau)=C_{\omega}(\tau)\omega(x,t(\tau)), \quad \tilde{u}=C_{\omega}(\tau)u(x,t(\tau)),
	\end{align*}
	where
	\begin{align*}
		C_{\omega}(\tau)=C_{\omega}(0)\exp\left(\int_{0}^{\tau}c_{\omega}(s)ds\right), \quad t(\tau)=\int_{0}^{\tau}C_{\omega}(s)ds,
	\end{align*}
	where $C_{\omega}(0)>0$. Compared with the inviscid case, the positive constant $C_{\omega}(0)$ provides an additional degree of freedom. This flexibility allows us to choose $C_{\omega}(0)$ in such a way that the viscous term $\nu C_{\omega}(\tau) \tilde{\omega}_{xx}$ remains small relative to the dominant terms in the rescaled system \eqref{tildenuPJE}. We will choose $C_{\omega}(0)=|1-a|^2$ in our proof.

	In terms of these variables, \ref{nuPJE} becomes
	\begin{align}\label{tildenuPJE}
		\begin{cases}
			\tilde{\omega}_{\tau} + \tilde{u}\tilde{\omega}_{x} = a\tilde{u}_{x}\tilde{\omega} + c_{\omega}\tilde{\omega} + \nu C_{\omega}(\tau)\tilde{\omega}_{xx}, & \tau >0, \ x \in \mathbb{T}, \\
			\tilde{u}_{xx} =\tilde{\omega}, & \tau >0, \ x \in \mathbb{T}.
		\end{cases}
	\end{align}
	To prove Theorem \ref{the viscous}, it suffices to establish the dynamical stability of \ref{tildenuPJE} under the scaling parameter $c_{\omega}<-c<0$ for all time, as discussed in \cite{HW2024,CHH2021}.

	\begin{proof}[Proof of Theorem \ref{the viscous}]
		\textbf{Step 1. Perturbation equations.}
		
		We now consider the approximate steady state
		\begin{align*}
			\bar{\omega}=-\sin x, \quad \bar{u}=\sin x, \quad \bar{c}_{\omega}=(1-a)\bar{u}_{x}(0)-\nu C_{\omega}(\tau)\frac{\bar{\omega}_{xxx}(0)}{\bar{\omega}_{x}(0)}=(1-a)+\nu C_{\omega}(\tau).
		\end{align*}
		Let
		\begin{align*}
			\tilde{\omega} (x,\tau) = \bar{\omega} + \hat{\omega} (x,\tau), \, \tilde{u} (x,\tau) = \bar{u} + \hat{u} (x,\tau), \, c_{\omega} (\tau) = \bar{c}_{\omega} + \hat{c}_{\omega} (\tau), \quad   \tau >0, \ x\in\mathbb{T},
		\end{align*}
		where $\hat{\omega}, \hat{u}$ and $\hat{c}_{\omega}$ are odd perturbations. The odd symmetry of the solution is preserved in time by  \ref{tildenuPJE}. We also introduce the following normalization condition
		\begin{align*}
			\hat{c}_{\omega}(\tau) = (1-a)\hat{u}_{x}(0,\tau)-\nu C_{\omega}(\tau) \frac{\hat{\omega}_{xxx}(0,\tau)}{\bar{\omega}_{x}(0)} = (1-a)\hat{u}_{x}(0,\tau) + \nu C_{\omega}(\tau) \hat{\omega}_{xxx}(0,\tau).
		\end{align*}
		This normalization ensures that $\hat{\omega}_{x}(0,\tau)$ remains zero, provided that the initial perturbation satisfies $\hat{\omega}_{x}(0,0)=0.$  Indeed, if $\hat{\omega}_{x}(0,\tau)=0,$ it then follows that
		\begin{align*}
			\frac{\mathrm{d}}{\mathrm{d}\tau} \hat{\omega}_{x} (0,\tau) = &\frac{\mathrm{d}}{\mathrm{d}\tau}
			\left(\hat{\omega}_{x} (0,\tau) + \bar{\omega}_{x}(0)\right)\\
			= & (a-1) \left(\hat{u}_{x}(0,\tau) + \bar{u}_{x}(0)\right) \left(\hat{\omega}_{x}(0,\tau) + \bar{\omega}_{x}(0)\right) + \nu C_{\omega}(\tau)\left(\hat{\omega}_{xxx} (0,\tau) + \bar{\omega}_{xxx}(0) \right)\\
			& + \left(\hat{c}_{\omega}(\tau) + \bar{c}_{\omega}\right) \left(\hat{\omega}_{x} (0,\tau) + \bar{\omega}_{x}(0)\right) =0.
		\end{align*}
		The perturbation equations can then be written as
		\begin{align}\label{per nuequ}
			\begin{cases}
				\hat{\omega}_{\tau} = \mathcal{L}_{1} \hat{\omega} + (1-a) \mathcal{A}\hat{\omega} + N(\hat{\omega}) + F(\bar{\omega}) + \nu C_{\omega}(t) V, & \tau >0, \ x \in \mathbb{T}, \\
				\hat{u}_{xx} = \hat{\omega}, & \tau >0, \ x \in \mathbb{T},
			\end{cases}
		\end{align}
		where the terms $\mathcal{L}_{1} \hat{\omega},\mathcal{A} \hat{\omega}, N(\hat{\omega}), F(\bar{\omega})$ are defined exactly as in the inviscid case, and the term
		\begin{align*}
			V = \hat{\omega}_{xx} + \left( 1 + \hat{\omega}_{xxx}(0,\tau)\right)\hat{\omega} - \hat{\omega}_{xxx}(0,\tau)\sin(x),
		\end{align*}
		 corresponds to  the effect of viscosity.

		\textbf{Step 2. Stability analysis of the rescaled perturbation equation.}

		Analogous to the estimates in the inviscid case, we obtain
		\begin{align}
			\frac{1}{2}\frac{\mathrm{d}}{\mathrm{d}\tau} \|\hat{\omega}\|_{\mathcal{H}}^{2} \leq - (\frac{1}{2} - C|1-a|) \|\hat{\omega}\|_{\mathcal{H}}^{2} + C\|\hat{\omega}\|_{\mathcal{H}}^{3} + C|1-a| \|\hat{\omega}\|_{\mathcal{H}} + \nu C_{\omega}(\tau) \left\langle V_{x}, \hat{\omega}_{x}\rho\right\rangle.
		\end{align}
		To handle the viscous term, we write
		\begin{align*}
			\left\langle V_{x},\hat{\omega}_{x} \rho \right\rangle & = \left\langle \hat{\omega}_{xxx} + \left( 1 + \hat{\omega}_{xxx}(0,\tau) \right)\hat{\omega}_{x} - \hat{\omega}_{xxx}(0,\tau) \cos(x) , \hat{\omega}_{x} \rho \right\rangle\\
			& = \left\langle\hat{\omega}_{xxx} - \hat{\omega}_{xxx}(0,\tau), \hat{\omega}_{x} \rho \right\rangle + \hat{\omega}_{xxx}(0,\tau) \left\langle 1-\cos(x),\hat{\omega}_{x}\rho\right\rangle + \left(1+\hat{\omega}_{xxx}(0,\tau)\right)\left\langle\hat{\omega}_{x},\hat{\omega}_{x}\rho\right\rangle.
		\end{align*}
		Notice that, when \(x\neq 0\),
		\begin{align*}
			|\rho_{x}| = \bigg|\frac{1}{2\pi} \frac{-\sin(x)}{(1-\cos(x))^{2}} \bigg| = \bigg|\rho\frac{-\sin(x)}{1-\cos(x)}\bigg|\lesssim\rho|\frac{1}{x}|
		\end{align*}
		and
		\begin{align*}
			|\rho_{xx}|=\bigg|\frac{1}{2\pi}\frac{2\sin^{2}(x) - \cos(x) (1-\cos(x))}{(1-\cos(x))^{3}}\bigg| = \rho\bigg|\frac{2\sin^{2}(x)}{(1-\cos(x))^{2}} - \frac{\cos(x)}{1-\cos(x)} \bigg| \lesssim \rho \frac{1}{x^{2}}.
		\end{align*}
		Use integration by parts twice to yield
		\begin{align*}
			\left\langle \hat{\omega}_{xxx} - \hat{\omega}_{xxx}(0,\tau),\hat{\omega}_{x}\rho\right\rangle = & - \left\langle\hat{\omega}_{xx} - \hat{\omega}_{xxx}(0,\tau) x, \hat{\omega}_{xx} \rho \right\rangle - \left\langle\hat{\omega}_{xx} - \hat{\omega}_{xxx}(0,\tau) x, \hat{\omega}_{x} \rho_{x}\right\rangle\\
			= & - \left\langle \hat{\omega}_{xx} - \hat{\omega}_{xxx}(0,\tau) x, \hat{\omega}_{xx} \rho \right\rangle -\big\langle \hat{\omega}_{xx} - \hat{\omega}_{xxx}(0,\tau) x, \frac{x^{2}}{2} \hat{\omega}_{xxx}(0,\tau)\rho_{x}\big\rangle\\
			& - \big\langle\hat{\omega}_{xx} - \hat{\omega}_{xxx}(0,\tau) x, (\hat{\omega}_{x}-\frac{x^{2}}{2} \hat{\omega}_{xxx}(0,\tau)) \rho_{x} \big\rangle\\
			\leq & - \|\hat{\omega}_{xx} \rho^{1/2} \|_{L^{2}}^{2} + C \left(|\hat{\omega}_{xxx}(0,\tau)|  \|\hat{\omega}_{xx}\rho^{1/2}\|_{L^{2}} + |\hat{\omega}_{xxx}(0,\tau)|^{2}\right)\\
			& - \frac{1}{2} \big\langle((\hat{\omega}_{x} - \frac{x^{2}}{2})^{2})_{x},\rho_{x}\big\rangle\\
			\leq & - \|\hat{\omega}_{xx}\rho^{1/2}\|_{L^{2}}^{2}\\
			& + C\left(|\hat{\omega}_{xxx}(0,\tau)|  \|\hat{\omega}_{xx}\rho^{1/2}\|_{L^{2}} + |\hat{\omega}_{xxx}(0,\tau)|^{2} + \|(\frac{\hat{\omega}_{x}}{x} - \frac{x}{2} \hat{\omega}_{xxx}(0,\tau)) \rho^{1/2}\|_{L^{2}}^{2}\right)\\
			\leq & - \frac{1}{2}\|\hat{\omega}_{xx}\rho^{1/2}\|_{L^{2}}^{2} + C\left(|\hat{\omega}_{xxx}(0,\tau)|^{2} + \|\frac{\hat{\omega}_{x}}{x} \rho^{1/2} \|_{L^{2}}^{2}\right).
		\end{align*}
		Thus, we obtain
		\begin{align}\label{V omegax}
			\left\langle V_{x},\hat{\omega}_{x}\rho\right\rangle \leq - \frac{1}{2} \|\hat{\omega}_{xx} \rho^{1/2}\|_{L^{2}}^{2} + C\left(|\hat{\omega}_{xxx}(0,\tau)|^{2} + \|\frac{\hat{\omega}_{x}}{x}\rho^{1/2}\|_{L^{2}}^{2} + (1 + |\hat{\omega}_{xxx}(0,\tau)|  \|\hat{\omega}\|_{\mathcal{H}}^{2}\right).
		\end{align}
		The essential difficulty of the viscous terms is that after integration by parts, the singular weight produces various trouble terms. Fortunately, these terms contribute only to higher-order terms near the origin. Denoting the interval $I=[-\frac{\pi}{2},\frac{\pi}{2}]$, and using $\hat{\omega}_{x} (0) = \hat{\omega}_{xx} (0) = 0$, we obtain
		\begin{align}\label{positive term}
			\begin{aligned}
				\|\frac{\hat{\omega}_{x}}{x}\rho^{1/2}\|_{L^{2}}^{2} = & \int_{I} \frac{\hat{\omega}_{x}^{2}}{x^{2}} \rho \mathrm{d}x + \int_{[-\pi,\pi] \setminus I} \frac{\hat{\omega}_{x}^{2}}{x^{2}}\rho \mathrm{d}x \\
				\lesssim & \|\hat{\omega}_{xxx}\|_{L^{\infty}(I)}^{2}  + \|\hat{\omega}\|_{\mathcal{H}}^{2}.
			\end{aligned}
		\end{align}
		Combing \ref{V omegax} with \ref{positive term}, we obtain
		\begin{align}\label{estimate Vx}
			\left\langle V_{x},\hat{\omega}_{x} \rho \right \rangle\leq - \frac{1}{2} \|\hat{\omega}_{xx} \rho^{1/2} \|_{L^{2}}^{2} + C\left(\|\hat{\omega}_{xxx} \|_{L^{\infty}(I)}^{2} + (1+\|\hat{\omega}_{xxx}\|_{L^{\infty}(I)})\|\hat{\omega}\|_{\mathcal{H}}^{2}\right).
		\end{align}
		
		\textbf{Step 3. Higher-order estimates of $\hat{\omega}$.}
		
		  In the above analysis, we need an appropriate higher-order norm to close the estimates.  Specifically, we aim to extract damping from the leading-order linear term and bound the term $\hat{\omega}_{xxx}$ by interpolating between lower-order and higher-order norms, using the Gagliardo-Nirenberg inequality. To this end, we define the $k$-th order weighted norms for $k\geq 1$ as
		\begin{align*}
			E_{k}^{2}(\tau) := \left\langle \hat{\omega}^{(k+1)},\hat{\omega}^{(k+1)}\rho_{k}\right\rangle, \quad \rho_{k} := (1+\cos(x))^{k},
		\end{align*}
		where $\omega^{(k)} := \partial_{x}^{k}\omega$. For clarity, we define $\rho_{0} = \rho$ and $E_{0}^{2}(\tau) = \left\langle \hat{\omega}_{x},\hat{\omega}_{x}\rho \right\rangle$. Similar weighted higher-order norms have been also used in \cite{HW2024}.
		
		Now, we begin with estimating $E_{k}^{2}(\tau)$ for $k>0$ as follows
		\begin{align*}
			\frac{1}{2} \frac{\mathrm{d}}{\mathrm{d}\tau} E_{k}^{2}(\tau) = & \left\langle(\mathcal{L}_{1}\hat{\omega})^{(k+1)}, \hat{\omega}^{(k+1)} \rho_{k}\right\rangle +  (1-a) \left\langle(\mathcal{A} \hat{\omega})^{(k+1)}, \hat{\omega}^{(k+1)} \rho_{k}\right\rangle +  \left\langle(N(\hat{\omega}))^{(k+1)},\hat{\omega}^{(k+1)} \rho_{k} \right\rangle\\
			& + \left\langle(F(\bar{\omega}))^{(k+1)},\hat{\omega}^{(k+1)} \rho_{k} \right\rangle + \nu C_{\omega}(t) \left\langle V^{(k+1)}, \hat{\omega}^{(k+1)}\rho_{k}\right\rangle.
		\end{align*}
		We proceed by analyzing the linear terms and extracting the damping. For simplification, we denote the terms as lower order terms (l.o.t for short) if their $\rho_{k}$-weighted $L^{2}$-norms are bounded by $\sum_{i=0}^{k-1}E_{i}(\tau)$. Since $\rho_{k}\leq C(k)\rho_{i}$ for $i<k$, using the l.o.t notation, we focus solely on the higher-order terms.
		\begin{align*}
			\left\langle(\mathcal{L}_{1}\hat{\omega})^{(k+1)}, \hat{\omega}^{(k+1)} \rho_{k} \right\rangle = \left\langle - \sin(x) \hat{\omega}^{(k+2)} - k \cos(x) \hat{\omega}^{(k+1)} + \mathrm{l.o.t.}, \hat{\omega}^{(k+1)} \rho_{k}\right\rangle.
		\end{align*}
		Using integration by parts yields
		\begin{align*}
			\left\langle - \sin(x) \hat{\omega}^{(k+2)} - k\cos (x)\omega^{(k+1)}, \hat{\omega}^{(k+1)} \rho_{k} \right\rangle = & \big\langle  \cos(x) \rho_{k} - k \rho_{k-1} \sin^{2}(x), \frac{1}{2}(\hat{\omega}^{(k+1)})^{2}\big\rangle\\
			& - k \left\langle\cos(x) \hat{\omega}^{(k+1)},\hat{\omega}^{(k+1)}\rho_{k}\right\rangle\\
			= & \big\langle(\hat{\omega}^{(k+1)})^{2}, \frac{1}{2} \cos(x) \rho_{k} - \frac{1}{2} k \rho_{k-1} \sin^{2}(x) - k\cos(x) \rho_{k} \big\rangle\\
			= & \big\langle(\hat{\omega}^{(k+1)})^{2},\rho_{k} (-\frac{k}{2}-\frac{(k-1)}{2} \cos(x)) \big\rangle\\
			\leq & - \left\langle \hat{\omega}^{(k+1)}, \hat{\omega}^{(k+1)} \rho_{k} \right\rangle.
		\end{align*}
		It follows that
		\begin{align*}
			\left\langle(\mathcal{L}_{1} \hat{\omega})^{(k+1)}, \hat{\omega}^{(k+1)} \rho_{k} \right\rangle & \leq -E_{k}^{2}(\tau) + C(k)\sum_{i=0}^{k-1} E_{i}(\tau) E_{k}(\tau)\\
			& \leq - \frac{1}{2} E_{k}^{2}(\tau) + C(k)\sum_{i=0}^{k-1} E_{i}^{2} (\tau).
		\end{align*}
		Similar estimates give
		\begin{align*}
			\left\langle(\mathcal{A} \hat{\omega})^{(k+1)}, \hat{\omega}^{(k+1)} \rho_{k}\right\rangle \leq E_{k}^{2} (\tau) + C(k) \sum_{i=0}^{k-1} E_{i}^{2} (\tau)
		\end{align*}
		and
		\begin{align*}
			\left\langle(F(\bar{\omega}))^{(k+1)},\hat{\omega}^{(k+1)} \rho_{k} \right\rangle\leq |1-a| C(k) E_{k} (\tau).
		\end{align*}
		Since
		\begin{align*}
			(N(\hat{\omega}))^{(k+1)} = -(\hat{u} \hat{\omega}_{x})^{(k+1)} + a(\hat{u}_{x} \hat{\omega})^{(k+1)} +(1-a) \hat{u}_{x} (0) \hat{\omega}^{(k+1)}.
		\end{align*}
		For the term $\hat{\omega}^{(k+2)}\hat{u}$, we can apply integration by parts and Poincar\'{e}'s inequality to obtain
		\begin{align*}
			\left\langle \hat{\omega}^{(k+2)} \hat{u},\hat{\omega}^{(k+1)} \rho_{k} \right\rangle & = - \frac{1}{2} \left\langle \hat{u}_{x} \rho_{k} -k \hat{u} \rho_{k-1} \sin(x), (\hat{\omega}^{(k+1)})^{2} \right \rangle\\
			& = \left\langle \hat{u}_{x} \rho_{k}, (\hat{\omega}^{(k+1)})^{2}\right\rangle - k \big\langle\frac{\sin^{2}(x)}{1 + \cos(x)}\frac{\hat{u}}{\sin(x)}, (\hat{\omega}^{(k+1)})^{2} \big\rangle\\
			& \leq C(k) \left(\|\hat{u}_{x}\|_{L^{\infty}} + \big\|\frac{\hat{u}}{\sin(x)}\big\|_{L^{\infty}}\right)\left\langle \hat{\omega}^{(k+1)}, \hat{\omega}^{(k+1)}\rho_{k}\right\rangle\\
			&\leq C(k)E_{0}(\tau)E_{k}^{2}(\tau).
		\end{align*}
		The terms associated with $\hat{u}_{x} \hat{\omega}^{(k+1)}$ can be handled in a similar way. Concerning the term $\hat{u}^{(i)} \hat{\omega}^{(k+2-i)} (2\leq i\leq k+1)$, we have
		\begin{align*}
			\|\hat{u}^{(i)} \hat{\omega}^{(k+2-i)} \rho_{k}^{1/2}\|_{L^{2}} & = \|\hat{u}^{(i)} \hat{\omega}^{(k+2-i)} (1 + \cos(x))^{1/2}\|_{L^{2}}\\
			& = \|\hat{u}^{(i)} \hat{\omega}^{(k+2-i)}( 1 + \cos(x))^{\frac{k+1-i}{2}}(1+\cos(x))^{\frac{i-1}{2}}\|_{L^{2}}\\
			& \leq \|\hat{u}^{(i)}(1+\cos(x))^{\frac{i-1}{2}}\|_{L^{\infty}} E_{k+1-i}(\tau).
		\end{align*}
		Since $\hat{u}_{xx} = \hat{\omega}$, it concludes that
		\begin{align*}
			|\hat{u}^{(i)}(1+\cos(x))^{\frac{i-1}{2}}|&=\big|\int_{-\pi}^{x}\partial_{x}\left(\hat{u}^{(i)}(1+\cos(x))^{\frac{i-1}{2}}\right) \mathrm{d}x \big|\\
			& \leq \|\hat{u}^{(i+1)}(1+\cos(x))^{\frac{i-1}{2}}\|_{L^{1}} + \frac{i-1}{2}\|\hat{u}^{(i)}(1+\cos(x))^{\frac{i-3}{2}}\sin(x)\|_{L^{1}}\\
			&\leq C(k)\sum_{i=0}^{k}E_{i}(\tau).
		\end{align*}
		Consequently, the nonlinear terms can be estimated as
		\begin{align*}
			\left\langle(N(\hat{\omega}))^{(k+1)},\hat{\omega}^{(k+1)}\rho_{k}\right\rangle\leq C(k)\sum_{i=0}^{k}E_{i}^{2}(\tau)E_{k}(\tau).
		\end{align*}
		
		Next, we focus on the viscous terms. Direct computations reveal that
		\begin{align*}
			\left\langle V^{(k+1)}, \hat{\omega}^{(k+1)} \rho_{k}\right\rangle \leq \left\langle \hat{\omega}^{(k+3)}, \hat{\omega}^{(k+1)}\rho_{k}\right\rangle + C(k)\|\hat{\omega}_{xxx}\|_{L^{\infty}(I)} E_{k}(\tau) + (1 + \|\hat{\omega}_{xxx}\|_{L^{\infty}(I)})E_{k}^{2}(\tau).
		\end{align*}
		For the term $\left\langle \hat{\omega}^{(k+3)},\hat{\omega}^{(k+1)}\rho_{k}\right\rangle$, using integration by parts, we obtain
		\begin{align*}
			\left\langle \hat{\omega}^{(k+3)}, \hat{\omega}^{(k+1)}\rho_{k}\right\rangle & = -\left\langle \hat{\omega}^{(k+2)}, \hat{\omega}^{(k+2)}\rho_{k}\right\rangle +  k \left\langle \hat{\omega}^{(k+2)},\hat{\omega}^{(k+1)} \rho_{k-1}\sin(x) \right\rangle\\
			& = -\left\langle \hat{\omega}^{(k+2)}, \hat{\omega}^{(k+2)} \rho_{k} \right\rangle + \frac{k}{2} \left\langle \hat{\omega}^{(k+1)}, \hat{\omega}^{(k+1)} \rho_{k-1}((k-1) - k\cos(x))\right\rangle\\
			& \leq -\left\langle \hat{\omega}^{(k+2)}, \hat{\omega}^{(k+2)}\rho_{k}\right\rangle + C(k) \left\langle \hat{\omega}^{(k+1)}, \hat{\omega}^{(k+1)}\rho_{k-1}\right\rangle.
		\end{align*}
		Combining with the leading order estimate \ref{estimate Vx}, we conclude that, for small enough constants $0<\mu(k_{0})<1$,
		\begin{align}\label{viscous estimate}
			\sum_{k=0}^{k_{0}}\mu^{k}\left\langle V^{(k+1)},\hat{\omega}^{(k+1)}\rho_{k}\right\rangle \leq C(k_0)\big(\|\hat{\omega}_{xxx}\|_{L^{\infty}(I)}^{2} + (1+\|\hat{\omega}_{xxx}\|_{L^{\infty}(I)})\sum_{k=0}^{k_{0}} \mu^{k} E_{k}^{2} (\tau)\big).
		\end{align}
		Here $\mu(k_{0})$ is a generic constant depending on $k_{0}$. We can choose $k_{0}$ large enough later so that $\|\hat{\omega}_{xxx}\|_{L^{\infty}(I)}$ can be bounded using interpolation inequalities.
		
		\textbf{Step 4. Finite time blow-up analysis.}

		For a fixed $k_{0}$, there exists a sufficiently small constant $0<\mu_{1}(k_{0})<\mu(k_{0})$, such that the following estimate holds
		\begin{align*}
			\frac{\mathrm{d}}{\mathrm{d}\tau} I_{k_{0}}^{2}(\tau)\leq & -(\frac{1}{2}-C|1-a|)I_{k_{0}}^{2}(\tau)+CI_{k_{0}}^{3}(\tau)+C|1-a|I_{k_{0}}(\tau)\\
			& + C\nu C_{\omega}(\tau)\left(\|\hat{\omega}_{xxx}\|_{L^{\infty}(I)}^{2}+(1+\|\hat{\omega}_{xxx}\|_{L^{\infty}(I)})I_{k_{0}}^{2}(\tau)\right),
		\end{align*}
		where the energy is defined as
		\begin{align*}
			I_{k_{0}}^{2}(\tau)=\sum_{k=0}^{k_{0}}\mu_{1}^{k}(k_{0})E_{k}^{2}(\tau).
		\end{align*}
		Here, the constants depend on $k_{0}$ and $\mu,$ but once $k_{0}$ is fixed, with $\mu=\mu_{1}(k_{0})$, they become just constants. We will later make both $C_{\omega}(t)$ and $|1-a|$ sufficiently small to close the argument. By the Gagliardo-Nirenberg inequality, we have
		\begin{align*}
			\|\hat{\omega}_{xxx}\|_{L^{\infty}(I)}\lesssim \|\hat{\omega}^{(4)}\|_{L^{2}(I)}^{\frac{7}{8}} \|\hat{\omega}\|_{L^{2}(I)}^{\frac{1}{8}}.
		\end{align*}
		Consequently, we conclude that
		\begin{align*}
			\|\hat{\omega}_{xxx}\|_{L^{\infty}(I)}\lesssim I_{k_{0}}(\tau),
		\end{align*}
		for $k_{0}\geq 4$. For simplicity, we take $k_{0}=4$ as an example, which implies
		\begin{align*}
			\frac{\mathrm{d}}{\mathrm{d}\tau}I_{4}(\tau) \leq  - (\frac{1}{2} - C|1-a|) I_{4}(\tau) + CI_{4}^{2}(\tau) + C|1-a| + C\nu C_{\omega}(\tau)(1+I_{4}(t))I_{4}(\tau).
		\end{align*}
		We now choose \(C_{\omega} (0) = |1-a|^{2} < \delta_{3}\) for a sufficiently small \(\delta_{3} > 0\) and use a bootstrap argument to obtain
		\begin{align*}
			I_{4} (\tau) \leq C|1-a|, \quad C_{\omega} (\tau) \leq 2|1-a|^2,
		\end{align*}
		where $C>0$ is some absolute constant.

Note that
		\begin{align*}
			\bar{c}_{\omega}=(1-a)+\nu C_{\omega}(\tau), \quad \hat{c}_{\omega}(\tau) = (1-a)\hat{u}_{x}(0,\tau) + \nu C_{\omega}(\tau) \hat{\omega}_{xxx}(0,\tau).
		\end{align*}
		By the Sobolev embedding, we have
		\begin{align*}
			|\hat{u}_{x} (0)| \lesssim \|\hat{\omega}\|_{\mathcal{H}} \lesssim I_{4} (\tau) \lesssim |1-a|
		\end{align*}
		and
		\begin{align*}
			|\hat{\omega}_{xxx}(0,\tau)| \lesssim I_{4} (\tau) \lesssim |1-a|.
		\end{align*}
		It concludes that there exists a small number $0<\delta\le \delta_3$ such that
		\begin{align*}
			c_{\omega}(\tau) & = \bar{c}_{\omega} + \hat{c}_{\omega} (\tau)\\
			& \leq (1-a) + C[(1-a)^{2} + \nu (1+|1-a|)(1-a)^{2}]<0,\\
		\end{align*}
		for $a\in (1,1+\delta)$.
		It follows that  blow-up occurs in finite time in the original physical variables for the viscous model \ref{nuPJE} with $a\in (1,1+\delta)$, as stated in Theorem \ref{the viscous}.

The proof of Theorem \ref{the viscous} is finished.
	\end{proof}

	\section{Appendix}\label{section appendix}
	 In the Appendix, we provide the proof of Lemma \ref{le operator}. To extract the maximal damping effect, we expand the perturbed solution in a Fourier series and perform exact calculations.
	 \begin{lemma}\label{le:linear operator}
	 	It holds that
	 	\begin{align*}
	 		\left\langle\mathcal{L}_{1} \hat{\omega},\hat{\omega} \right\rangle_{\mathcal{H}} \leq -\frac{1}{2}\|\hat{\omega}\|_{\mathcal{H}}^{2}.
	 	\end{align*}
	 \end{lemma}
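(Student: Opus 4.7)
\medskip

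\noindent\textbf{Proof proposal.} The plan is to diagonalize (up to tridiagonal structure) the operator $\mathcal{L}_1$ in the orthonormal basis $\{\tilde{e}_k^{(o)}\}_{k\geq 1}$ of $\mathcal{H}$ from Lemma~\ref{le cobasis}, and then use a summation-by-parts step to exhibit the coercivity. First I would simplify the expression for $\mathcal{L}_1\hat\omega$. Grouping the terms as
$$\mathcal{L}_1\hat\omega=-\sin(x)(\hat\omega+\hat u)_x+\cos(x)(\hat\omega+\hat u)=-\partial_x\bigl[\sin(x)F\bigr]+2\cos(x)F,\qquad F:=\hat\omega+\hat u,$$
and differentiating once, the useful identity
$$(\mathcal{L}_1\hat\omega)_x=-\sin(x)\bigl(F_{xx}+F\bigr)$$
drops out after cancellation. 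The inner product thus becomes
$$\langle\mathcal{L}_1\hat\omega,\hat\omega\rangle_{\mathcal{H}}=-\frac{1}{\pi}\int_{\mathbb{T}}\cos(x/2)\bigl(F_{xx}+F\bigr)\,\frac{\hat\omega_x}{2\sin(x/2)}\,\mathrm{d}x,$$
where I used $\sin(x)=2\sin(x/2)\cos(x/2)$ and one factor of $\sin(x/2)$ has been absorbed.

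Next I would expand everything against the half-integer frequency basis. Writing $\hat\omega=\sum_{m\geq 1}a_m\sin(mx)$, one has $\hat u=-\sum_{m\geq 1}\frac{a_m}{m^2}\sin(mx)$, so $F=\sum\frac{m^2-1}{m^2}a_m\sin(mx)$ and hence
$$F_{xx}+F=-\sum_{m\geq 1}\frac{(m^2-1)^2}{m^2}a_m\sin(mx).$$
On the other hand, the computation in Lemma~\ref{le cobasis} yields
$$\frac{\hat\omega_x}{\sin(x/2)}=-2\sum_{k\geq 1}c_k\sin\!\bigl((k+\tfrac{1}{2})x\bigr),\qquad \|\hat\omega\|_{\mathcal{H}}^2=\sum_{k\geq 1}c_k^2,$$
where $\hat\omega=\sum c_k\tilde e_k^{(o)}$. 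The two Fourier pictures are linked by matching coefficients of $\sin(mx)$: $a_m=(c_{m-1}-c_m)/m$ with $c_0:=0$. Using the product-to-sum identity
$$\cos(x/2)\sin(mx)=\tfrac{1}{2}\bigl[\sin((m+\tfrac12)x)+\sin((m-\tfrac12)x)\bigr]$$
together with the orthogonality $\int_{-\pi}^{\pi}\sin((k+\tfrac12)x)\sin((l+\tfrac12)x)\,\mathrm{d}x=\pi\delta_{kl}$, the inner product collapses to
$$\langle\mathcal{L}_1\hat\omega,\hat\omega\rangle_{\mathcal{H}}=-\frac{1}{2}\sum_{k\geq 1}c_k\bigl[A_k(c_{k-1}-c_k)+A_{k+1}(c_k-c_{k+1})\bigr],\qquad A_k:=\frac{(k^2-1)^2}{k^3}.$$

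The final step is a discrete integration by parts. Rewriting
$$A_k c_k(c_{k-1}-c_k)+A_{k+1}c_k(c_k-c_{k+1})$$
and shifting the index $k\to k-1$ in the second sum, the combined expression telescopes into $\sum_k A_k(c_{k-1}^2-c_k^2)$, so (assuming enough decay of $c_k$ that the boundary term vanishes, which is guaranteed for $\hat\omega$ smooth enough and then extended by density)
$$\langle\mathcal{L}_1\hat\omega,\hat\omega\rangle_{\mathcal{H}}=\frac{1}{2}\sum_{k\geq 1}\bigl(A_k-A_{k+1}\bigr)c_k^2.$$
Using $A_k=k-\frac{2}{k}+\frac{1}{k^3}$ one checks
$$A_k-A_{k+1}=-1-\frac{2}{k(k+1)}+\Bigl(\frac{1}{k^3}-\frac{1}{(k+1)^3}\Bigr)\leq-1\quad\text{for all }k\geq 1,$$
which is the elementary inequality $2k^2(k+1)^2\geq 3k^2+3k+1$. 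The desired bound $\langle\mathcal{L}_1\hat\omega,\hat\omega\rangle_{\mathcal{H}}\leq -\tfrac{1}{2}\|\hat\omega\|_{\mathcal{H}}^2$ follows.

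The main obstacle is purely algebraic bookkeeping: keeping the conversion $a_m\leftrightarrow c_k$ straight, verifying that the cross terms in the tridiagonal quadratic form really do telescope, and producing the pointwise bound on $A_k-A_{k+1}$ with the correct constant so that one obtains exactly the factor $1/2$ in the statement. The Abel-summation step is where a sign error would silently ruin the coercivity, so I would carry out that step most carefully.
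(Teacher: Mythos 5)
Your proposal is correct, and it arrives at exactly the diagonal quadratic form that the paper obtains: your coefficient $A_k=\frac{(k^2-1)^2}{k^3}$ is twice the paper's $d_k=\frac{(k+1)^2(k-1)^2}{2k^3}$, so your identity $\left\langle\mathcal{L}_1\hat\omega,\hat\omega\right\rangle_{\mathcal{H}}=\frac{1}{2}\sum_k(A_k-A_{k+1})c_k^2$ coincides with the paper's $\sum_k(d_k-d_{k+1})\bigl(\hat\omega_k^{(o)}\bigr)^2$, and your bound $A_k-A_{k+1}\le -1$ is the paper's $d_{k+1}-d_k>\frac{1}{2}$. The route differs in how you reach that form. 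The paper computes the action of $\mathcal{L}_1$ directly on the basis $\tilde e_k^{(o)}$, obtaining a tridiagonal representation $\mathcal{L}_1\tilde e_k^{(o)}=-d_{k+1}\tilde e_{k+1}^{(o)}+(d_k-d_{k+1})\tilde e_k^{(o)}+d_k\tilde e_{k-1}^{(o)}$ whose off-diagonal part is antisymmetric and cancels in the quadratic form; you instead exploit the pointwise identity $(\mathcal{L}_1\hat\omega)_x=-\sin(x)(F_{xx}+F)$ with $F=\hat\omega+\hat u$, pass to the half-integer frequencies $\sin((k+\tfrac12)x)$ via the product-to-sum formula, and telescope using $a_m=(c_{m-1}-c_m)/m$. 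Your identity for $(\mathcal{L}_1\hat\omega)_x$ is a genuinely cleaner structural observation that makes the appearance of the factor $\frac{(m^2-1)^2}{m^2}$ transparent, at the cost of juggling two Fourier pictures; the paper's version keeps everything in one basis but requires a longer brute-force computation. One caveat applies equally to both arguments: the Abel-summation step needs justification when $\sum_k kc_k^2$ is not finite, since $A_k\sim k$ grows. The paper flags this explicitly ("we formally deduce") and closes the gap via semigroup theory or a Galerkin approximation; your parenthetical density remark is the same idea, but you should state it as a genuine regularization step (prove the identity for $\hat\omega$ with finitely many nonzero $c_k$, then pass to the limit using that each term $(A_k-A_{k+1})c_k^2$ has a fixed sign) rather than leave it as an aside.
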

	\begin{proof}[Proof of Lemma \ref{le:linear operator}]
		Note that
		\begin{align}
			\mathcal{L}_{1}e_{k}^{(o)} = A_{k}e_{k+1}^{(o)} + B_{k}e_{k-1}^{(o)}, \quad  k \geq 2,
		\end{align}
		where
		\begin{align*}
			e_{k}^{(o)} = \sin(kx), \quad k \geq 1
		\end{align*}
		and
		\begin{align}
			A_{k} = -\frac{(k+1)(k-1)^{2}}{2k^{2}}, \quad B_{k} = \frac{(k+1)^{2}(k-1)}{2k^{2}}, \quad k \geq 2.
		\end{align}
		For $k \geq 2,$ direct calculations give
		\begin{align*}
			\mathcal{L}_{1}\tilde{e}_{k}^{(o)} = & \frac{A_{k+1}}{k+1}e_{k+2}^{(o)} + \frac{B_{k+1}}{k+1}e_{k}^{(o)} - \frac{A_{k}}{k}e_{k+1}^{(o)} - \frac{B_{k}}{k}e_{k-1}^{(o)}\\
			= & -\frac{(k+2)k^{2}}{2(k+1)^{3}}e_{k+2}^{(o)} + \frac{(k+2)^{2}k}{2(k+1)^{3}}e_{k}^{(o)} + \frac{(k+1)(k-1)^{2}}{2k^{3}}e_{k+1}^{(o)} - \frac{(k+1)^{2}(k-1)}{2k^{3}}e_{k-1}^{(o)}\\
			= & -\frac{(k+2)^{2}k^{2}}{2(k+1)^{3}}\left(\frac{e_{k+2}^{(o)}}{k+2} - \frac{e_{k+1}^{(o)}}{k+1}\right) - \frac{(k+2)^{2}k^{2}}{2(k+1)^{3}}\frac{e_{k+1}^{(o)}}{k+1} + \frac{(k+2)^{2}k^{2}}{2(k+1)^3}\frac{e_{k}^{(o)}}{k}\\
			& +  \frac{(k+1)^{2}(k-1)^{2}}{2k^{3}}\left(\frac{e_{k}^{(o)}}{k} - \frac{e_{k-1}^{(o)}}{k-1}\right) - \frac{(k+1)^{2}(k-1)^{2}}{2k^{3}}\frac{e_{k}^{(o)}}{k} + \frac{(k+1)^{2}(k-1)^{2}}{2k^{3}}\frac{e_{k-1}^{(o)}}{k-1}\\
			:= & -d_{k+1}\tilde{e}_{k+1}^{(o)} + (-d_{k+1} + d_{k})\tilde{e}_{k}^{(o)} + d_{k}\tilde{e}_{k-1}^{(o)}.
		\end{align*}
		In addition, when $k=1$, we have
		\begin{align*}
			\mathcal{L}_{1} e_{1}^{(o)} = 0
		\end{align*}
		and
		\begin{align*}
			\mathcal{L}_{1}\tilde{e}_{1}^{(o)}=-d_{2}\tilde{e}_{2}^{(o)}+(d_{1}-d_{2})\tilde{e}_{1}^{(o)}.
		\end{align*}
		Thus we can write
		\begin{align*}
			\mathcal{L}_{1}\tilde{e}_{k}^{(o)} = -d_{k+1}\tilde{e}_{k+1}^{(o)} + (-d_{k+1} + d_{k})\tilde{e}_{k}^{(o)} + d_{k}\tilde{e}_{k-1}^{(o)},
		\end{align*}
		where $d_{k} = \frac{(k+1)^{2}(k-1)^{2}}{2k^{3}}, k = 1, 2, \cdots.$
		If we expand \(\hat{\omega} (x,\tau)\) as
		\[\hat{\omega}(x, \tau) = \sum_{k \geq 1} \hat{\omega}^{(o)}_k(\tau)\, \tilde{e}^{(o)}_k,\]
		then the equation $\partial_{\tau} \hat{\omega} = \mathcal{L}_{1} \hat{\omega}$ reduces to the following infinite-dimensional system of ordinary differential equations
		\begin{align*}
			\partial_{\tau} \hat{\omega}_{k}^{(o)} (\tau) = - d_{k} \hat{\omega}_{k-1}^{(o)} (\tau) - (d_{k+1} - d_{k}) \hat{\omega}_{k}^{(o)} (\tau) + d_{k+1} \hat{\omega}_{k+1}^{(o)} (\tau), \quad k \geq 1,
		\end{align*}
		where $d_{1} \hat{\omega}_{0}^{(o)}$ is understood to be zero. Thus, we formally deduce that
		\begin{align}\label{sumomegak}
			\begin{aligned}
				\frac{1}{2} \partial_{\tau} \sum_{k\geq 1} \big(\hat{\omega}_{k}^{(o)}\big)^{2} & =
				\left\langle \mathcal{L}_{1}\hat{\omega},\hat{\omega} \right\rangle_{\mathcal{H}}\\
				& = \sum_{k\geq 1} -d_{k} \hat{\omega}_{k-1}^{(o)}\hat{\omega}_{k}^{(o)} + (-d_{k+1} + d_{k}) \big(\hat{\omega}_{k}^{(o)}\big)^{2} + d_{k+1} \hat{\omega}_{k}^{(o)}\hat{\omega}_{k+1}^{(o)}\\
				& = \sum_{k\geq 1} (-d_{k+1} + d_{k}) \big(\hat{\omega}_{k}^{(o)}\big)^{2}\\
				&\leq  - \frac{1}{2} \big(\hat{\omega}_{k}^{(o)}\big)^{2}.
			\end{aligned}
		\end{align}
		Here, we have used the fact that for all $k\geq 1$,
		\begin{align*}
			d_{k+1} - d_{k} &= \frac{(k+2)^{2}k^{2}}{2(k+1)^{3}} -  \frac{(k+1)^{2}(k-1)^{2}}{2k^{3}}\\
			&=\frac{k^{6} + 3k^{5} +5k^{4} + 5k^{3} - k^{2} -3k -1}{2k^{3}(k+1)^{3}}\\
			&=\frac{1}{2} + \frac{2k^{4} + 4k^{3} -k^{2} -3k -1}{2k^{3}(k+1)^{3}}\\
			& > \frac{1}{2}.
		\end{align*}
		While the formal computation in \ref{sumomegak} illustrates the dissipative structure of \(\mathcal{L}_1\), it is required to justify the validity of the summation, which may not converge in a straightforward manner. One rigorous approach is to invoke standard linear semigroup theory.
		To this end, we consider the real Hilbert space $Y$ formally spanned by the basis functions $\{\tilde{e}_{k}^{(o)}, k\geq 1\}$ in which this basis is orthonormal, namely
		\begin{align*}
			Y = \bigg\{\hat{\omega} (x,\tau) = \sum_{k\geq 1} \hat{\omega}_{k}^{(o)} (\tau) \tilde{e}_{k}^{(o)} \bigg| \{\hat{\omega}_{k}^{(o)}\}_{k\geq 1}\in l^{2}\bigg\}.
		\end{align*}
		The operator \(\mathcal{L}_1\) defines a densely defined, closed, unbounded linear operator on \(Y\). The estimate \eqref{sumomegak}, together with standard energy methods, implies via a direct application of the Hille--Yosida theorem that \(\mathcal{L}_1\) generates a strongly continuous semigroup satisfying
		\begin{align*}
			\|e^{\tau \mathcal{L}_{1}} \hat{\omega} (0)\|_{\mathcal{H}} \leq e^{-\frac{1}{2} \tau} \|\hat{\omega} (0)\|_{\mathcal{H}}.
		\end{align*}
		An alternative approach is to apply Galerkin's method to establish global well-posedness. In addition, the decay rate of the Fourier coefficients of the solution can be rigorously derived (see \cite{GJ2025} for more details).
	\end{proof}


\begin{thebibliography}{99}
		
		\bibitem{B1948}
		J.~M. Burgers.
		\newblock A mathematical model illustrating the theory of turbulence.
		\newblock In R.~von Mises and T.~von K\'arm\'an, {Advances in
			{A}pplied {M}echanics}, pages 171--199. Academic Press, New York, 1948.
		
		
		\bibitem{CF1984}
		F.~Calogero.
		\newblock A solvable nonlinear wave equation.
		\newblock {Stud. Appl. Math.}, 70(3):189--199, 1984.
		
		\bibitem{C2021}
		J.~Chen.
		\newblock On the slightly perturbed {D}e {G}regorio model on {$S^1$}.
		\newblock {Arch. Ration. Mech. Anal.}, 241(3):1843--1869, 2021.
		
		\bibitem{CH2021}
		J.~Chen and T.~Y. Hou.
		\newblock Finite time blowup of 2{D} {B}oussinesq and 3{D} {E}uler equations
		with {$C^{1,\alpha}$} velocity and boundary.
		\newblock {Comm. Math. Phys.}, 383(3):1559--1667, 2021.
		
		\bibitem{CHH2021}
		J.~Chen, T.~Y. Hou, and D.~Huang.
		\newblock On the finite time blowup of the {D}e {G}regorio model for the 3{D}
		{E}uler equations.
		\newblock {Comm. Pure Appl. Math.}, 74(6):1282--1350, 2021.
		
		\bibitem{CS1989}
		S.~Childress, G.~R. Ierley, E.~A. Spiegel, and W.~R. Young.
		\newblock Blow-up of unsteady two-dimensional {E}uler and {N}avier-{S}tokes
		solutions having stagnation-point form.
		\newblock {J. Fluid Mech.}, 203:1--22, 1989.
		
		\bibitem{CW2010}
		C.-H. Cho and M.~Wunsch.
		\newblock Global and singular solutions to the generalized {P}roudman-{J}ohnson
		equation.
		\newblock {J. Differential Equations}, 249(2):392--413, 2010.
		
		\bibitem{CW2012}
		C.-H. Cho and M.~Wunsch.
		\newblock Global weak solutions to the generalized {P}roudman-{J}ohnson
		equation.
		\newblock {Commun. Pure Appl. Anal.}, 11(4):1387--1396, 2012.
		
		\bibitem{CW2009}
		A.~Constantin and M.~Wunsch.
		\newblock On the inviscid {P}roudman-{J}ohnson equation.
		\newblock {Proc. Japan Acad. Ser. A Math. Sci.}, 85(7):81--83, 2009.
		
		\bibitem{E2021}
		T.~Elgindi.
		\newblock Finite-time singularity formation for {$C^{1,\alpha}$} solutions to
		the incompressible {E}uler equations on {$\Bbb R^3$}.
		\newblock {Ann. of Math. (2)}, 194(3):647--727, 2021.
		
		\bibitem{EG2021}
		T.~M. Elgindi, T.-E. Ghoul, and N.~Masmoudi.
		\newblock Stable self-similar blow-up for a family of nonlocal transport
		equations.
		\newblock {Anal. PDE}, 14(3):891--908, 2021.
		
		\bibitem{GJ2025}
		J.~Guo and Q.~Jiu.
		\newblock Stability and Instability on the De Gregorio Modification of the
		Constantin-Lax-Majda model.
		\newblock {arXiv preprint arXiv:2506.02800}, 2025.
		
		\bibitem{HW2024}
		T.~Y. Hou and Y.~Wang.
		\newblock Blowup analysis for a quasi-exact 1{D} model of 3{D} {E}uler and
		{N}avier-{S}tokes.
		\newblock {Nonlinearity}, 37(3):Paper No. 035001, 28, 2024.
		
		\bibitem{B1991}
		J.~K. Hunter and R.~Saxton.
		\newblock Dynamics of director fields.
		\newblock {SIAM J. Appl. Math.}, 51(6):1498--1521, 1991.
		
		\bibitem{KM2006}
		C.~E. Kenig and F.~Merle.
		\newblock Global well-posedness, scattering and blow-up for the
		energy-critical, focusing, non-linear {S}chr\"odinger equation in the radial
		case.
		\newblock {Invent. Math.}, 166(3):645--675, 2006.
		
		\bibitem{KF2020}
		F.~Kogelbauer.
		\newblock On the global well-posedness of the inviscid generalized
		{P}roudman-{J}ohnson equation using flow map arguments.
		\newblock {J. Differential Equations}, 268(3):1050--1080, 2020.
		
		\bibitem{LP1988}
		M.~J. Landman, G.~C. Papanicolaou, C.~Sulem, and P.-L. Sulem.
		\newblock Rate of blowup for solutions of the nonlinear {S}chr\"odinger
		equation at critical dimension.
		\newblock {Phys. Rev. A (3)}, 38(8):3837--3843, 1988.
		
		\bibitem{LL2020}
		Z.~Lei, J.~Liu, and X.~Ren.
		\newblock On the {C}onstantin-{L}ax-{M}ajda model with convection.
		\newblock {Comm. Math. Phys.}, 375(1):765--783, 2020.
		
		\bibitem{L2007}
		J.~Lenells.
		\newblock The {H}unter-{S}axton equation describes the geodesic flow on a
		sphere.
		\newblock {J. Geom. Phys.}, 57(10):2049--2064, 2007.
		
		\bibitem{L2008}
		J.~Lenells.
		\newblock The {H}unter-{S}axton equation: a geometric approach.
		\newblock {SIAM J. Math. Anal.}, 40(1):266--277, 2008.
		
		\bibitem{LZ2022}
		C.~Li and K.~Zhang.
		\newblock A note on the {G}agliardo-{N}irenberg inequality in a bounded domain.
		\newblock {Commun. Pure Appl. Anal.}, 21(12):4013--4017, 2022.
		
		\bibitem{MM2014}
		Y.~Martel, F.~Merle, and P.~Rapha\"el.
		\newblock Blow up for the critical generalized {K}orteweg--de {V}ries equation.
		{I}: {D}ynamics near the soliton.
		\newblock {Acta Math.}, 212(1):59--140, 2014.
		
		\bibitem{MP1986}
		D.~W. McLaughlin, G.~C. Papanicolaou, C.~Sulem, and P.~L. Sulem.
		\newblock Focusing singularity of the cubic schr\"odinger equation.
		\newblock {Phys. Rev. A}, 34:1200--1210, Aug 1986.
		
		\bibitem{MR2005}
		F.~Merle and P.~Raphael.
		\newblock The blow-up dynamic and upper bound on the blow-up rate for critical
		nonlinear {S}chr\"odinger equation.
		\newblock {Ann. of Math. (2)}, 161(1):157--222, 2005.
		
		\bibitem{MZ1997}
		F.~Merle and H.~Zaag.
		\newblock Stability of the blow-up profile for equations of the type
		{$u_t=\Delta u+|u|^{p-1}u$}.
		\newblock {Duke Math. J.}, 86(1):143--195, 1997.
		
		\bibitem{OH2009}
		H.~Okamoto.
		\newblock Well-posedness of the generalized {P}roudman-{J}ohnson equation
		without viscosity.
		\newblock {J. Math. Fluid Mech.}, 11(1):46--59, 2009.
		
		\bibitem{OO2005}
		H.~Okamoto and K.~Ohkitani.
		\newblock On the role of the convection term in the equations of motion of
		incompressible fluid.
		\newblock {Journal of the Physical Society of Japan}, 74(10):2737--2742,
		2005.
		
		\bibitem{OS2008}
		H.~Okamoto, T.~Sakajo, and M.~Wunsch.
		\newblock On a generalization of the {C}onstantin-{L}ax-{M}ajda equation.
		\newblock {Nonlinearity}, 21(10):2447--2461, 2008.
		
		\bibitem{OHZ2000}
		H.~Okamoto and J.~Zhu.
		\newblock Some similarity solutions of the {N}avier-{S}tokes equations and
		related topics.
		\newblock In {Proceedings of 1999 {I}nternational {C}onference on
			{N}onlinear {A}nalysis ({T}aipei)}, volume~4, pages 65--103, 2000.
		
		\bibitem{P2001}
		M.~V. Pavlov.
		\newblock The {C}alogero equation and {L}iouville-type equations.
		\newblock {Teoret. Mat. Fiz.}, 128(1):109--115, 2001.
		
		\bibitem{SS2013}
		A.~Sarria and R.~Saxton.
		\newblock Blow-up of solutions to the generalized inviscid {P}roudman-{J}ohnson
		equation.
		\newblock {J. Math. Fluid Mech.}, 15(3):493--523, 2013.
		
		\bibitem{SS2015}
		A.~Sarria and R.~Saxton.
		\newblock The role of initial curvature in solutions to the generalized
		inviscid {P}roudman-{J}ohnson equation.
		\newblock {Quart. Appl. Math.}, 73(1):55--91, 2015.
		
		\bibitem{W2011}
		M.~Wunsch.
		\newblock The generalized {P}roudman-{J}ohnson equation revisited.
		\newblock {J. Math. Fluid Mech.}, 13(1):147--154, 2011.
		
		\bibitem{Y2004}
		Z.~Yin.
		\newblock On the structure of solutions to the periodic {H}unter-{S}axton
		equation.
		\newblock {SIAM J. Math. Anal.}, 36(1):272--283, 2004.
		
	\end{thebibliography}
\end{document}